\documentclass[12pt]{amsart}
\usepackage[dvips]{graphics}
\usepackage[latin1]{inputenc}
\usepackage{amssymb, amsmath, amscd, mathrsfs,marvosym, psfrag}
\input xy
\xyoption{all}
\DeclareMathAlphabet{\mathpzc}{OT1}{pzc}{m}{it}

\setlength{\oddsidemargin}{1.1cm}        \setlength{\evensidemargin}{1.1cm}
\setlength{\marginparsep}{1mm}          \setlength{\marginparwidth}{.5cm}
\setlength{\textwidth}{14cm}            \setlength{\topmargin}{-1.3cm}
\setlength{\textheight}{24cm}           \setlength{\headheight}{.1in}
\setlength{\headsep}{.5in}              \setlength{\baselineskip}{3pt}
\setlength{\parskip}{1.mm}

\newtheorem{theorem}{Theorem}[section]

\newtheorem{proposition}[theorem]{Proposition}
\newtheorem{corollary}[theorem]{Corollary}
\newtheorem{conjecture}[theorem]{Conjecture}
\newtheorem{lemma}[theorem]{Lemma}

\theoremstyle{definition}
\newtheorem{definition}[theorem]{Definition}

\theoremstyle{remark}
\newtheorem{remark}[theorem]{Remark}
\newtheorem{Rem}[theorem]{Remark}

\def\varle{\leqslant}

\newcommand{\CA}{{\mathcal A}}

\newcommand{\CI}{{\mathcal I}}
\newcommand{\CJ}{{\mathcal J}}
\newcommand{\CK}{{\mathcal K}}

\newcommand{\CO}{{\mathcal O}}

\newcommand{\CW}{{\mathcal W}}

\newcommand{\CZ}{{\mathcal Z}}



\newcommand{\fh}{{{\mathfrak h}}}

\newcommand{\fg}{{{\mathfrak g}}} 
\newcommand{\fb}{{{\mathfrak b}}}

\newcommand{\fn}{{{\mathfrak n}}}

\newcommand{\fhd}{\fh^\star}


\newcommand{\hCW}{{\widehat\CW}}

\newcommand{\hPi}{{\widehat\Pi}}
\newcommand{\hfh}{{\widehat\fh}}
\newcommand{\hfg}{{\widehat\fg}}
\newcommand{\hfb}{{\widehat\fb}}
\newcommand{\hfn}{{\widehat\fn}}

\newcommand{\hR}{{\widehat R}}

\newcommand{\hfhd}{{\widehat{\fh}}^\star}


\newcommand{\tfg}{\widetilde\fg}

\newcommand{\DC}{{\mathbb C}}


\newcommand{\DZ}{{\mathbb Z}}

\newcommand{\DN}{{\mathbb N}}



\newcommand{\End}{{\operatorname{End}}}
\newcommand{\Ext}{{\operatorname{Ext}}}
\newcommand{\Hom}{{\operatorname{Hom}}}

\newcommand{\id}{{\operatorname{id}}}

\newcommand{\catmod}{{\operatorname{-mod}}}

\newcommand{\inj}{{\hookrightarrow}}

\newcommand{\dual}{{\mathsf D}}

\newcommand{\ol}{\overline}

\newcommand{\Quot}{{\operatorname{Quot\,}}}

\newcommand{\sur}{\mbox{$\to\!\!\!\!\!\to$}}

\newcommand{\GL}{{\operatorname{GL}}}


\newcommand{\comment}[1]{}


\newcommand{\cprime}{$'$}
\newcommand{\Vcr}{V^{\crit}(\fing)}
\newcommand{\gr}{\mathrm{gr}}

\newcommand{\RZm}{{\mathcal{Z}}^{-}}
\newcommand{\Vcri}{V^{\crit}(\fing)}
\newcommand{\vac}{\mathbf{1}}

\renewcommand{\*}{{\otimes}}
\newcommand{\BRST}[2]{H^{\frac{\infty}{2}+#1}(\mathfrak{n}_-[t,t^{-1}],#2\*
\mathbb{C}_{\widehat{\Psi}})}
\newcommand{\semiwedge}[1]{\bigwedge\nolimits^{\frac{\infty}{2}+#1}}
\newcommand{\lam}{\lambda}
\newcommand{\bra}{{\langle}}
\newcommand{\ket}{{\rangle}}

\newcommand{\fing}{\mathfrak{g}}
\newcommand{\finn}{\mathfrak{n}}

\newcommand{\affg}{\widehat{\mathfrak{g}}}
\newcommand{\affh}{\widehat{\mathfrak{h}}}

\newcommand{\crit}{\mathrm{crit}}

\newcommand{\ra}{\rightarrow}
\newcommand{\rank}{\mathrm{rank}}
\DeclareMathOperator{\cha}{\mathrm{ch}}

\newcommand{\inv}{{}^{-1}}
\newcommand{\C}{\mathbb{C}}
\newcommand{\Z}{\mathbb{Z}}
\newcommand{\Cl}{\mathcal{C}l}
\newcommand{\+}{\mathop{\oplus}}
\newcommand{\imag}{\mathrm{im}}
\newcommand{\real}{\mathrm{re}}


\newcommand{\rCO}{{\ol\CO}}

\newcommand{\rDelta}{{\ol\Delta}}

\begin{document}

\pagenumbering{arabic}
\title[On restricted Verma modules]{On the restricted Verma modules  at the
  critical level}  
\author[]{Tomoyuki Arakawa, Peter Fiebig}
\thanks{T.A. is 
partially  supported 
by the JSPS Grant-in-Aid  for Scientific Research (B)
No.\ 20340007. P.F. is supported by a grant of the Landesstiftung Baden--W\"urttemberg }

\begin{abstract} We study the restricted Verma modules of an affine
  Kac--Moody algebra at the critical level with a special emphasis on
  their Jordan--H\"older multiplicities. Feigin and Frenkel
  conjectured a formula for these multiplicities that involves
  the periodic Kazhdan--Lusztig polynomials. We prove this conjecture
  for all subgeneric blocks and for the case of anti-dominant simple
  subquotients.
\end{abstract}

\maketitle

\section{Introduction}

The representation theory of affine Kac--Moody algebras at the
critical level is one of the essential ingredients in the approach towards the geometric
Langlands conjectures proposed by  Beilinson and Drinfeld (cf.~
\cite{BeiDri96}). In particular, the correspondence between the center
of the (completed) universal enveloping algebra of an affine
Kac--Moody algebra at the critical level and the geometry of the space
of opers associated with  the Langlands dual datum (cf.~\cite{FeiFre92}) is one of the main tools used in the construction of a 
part of the Langlands correspondence in \cite{BeiDri96}. 

\subsection{The local geometric Langlands conjectures}
In \cite{FreGai06} Frenkel and Gaitsgory formulated the {\em local
  geometric Langlands conjectures}, which 
relate the critical level representation theory of an affine Kac--Moody
algebra to the geometry of an affine flag manifold. In a
series of subsequent papers, the authors proved parts of these conjectures. In particular, in the paper \cite{FreGai07} a derived
equivalence between a certain category of $D$-modules on the affine
flag variety and a derived version of the affine category $\CO$ at the
critical level was constructed using a localization
functor of Beilinson--Bernstein type. It seems, however, hard to
control the action of this equivalence on the respective hearts of the
triangulated categories, and hence it is not yet possible to
deduce information on the simple critical characters of the category $\CO$
from the Frenkel--Gaitsgory result.

\subsection{The Andersen--Jantzen--Soergel approach}
In this paper we study the critical representation theory by a very
different method that is inspired by results of  Jantzen
(cf.~\cite{Jan79}) and Soergel (cf.~\cite{Soe90}) in the case of finite dimensional complex Lie
algebras and of  Andersen, Jantzen and Soergel in the case of modular Lie
algebras and quantum groups (cf.~\cite{AJS94}). The analogous results for the
non-critical blocks of $\CO$ for a symmetrizable Kac--Moody algebra
can be found in \cite{Fie06}. 

The main idea of
this approach is to first describe the {\em generic} and {\em
  subgeneric} blocks of the respective representation theory in as
much detail as possible, and then to  deform an arbitrary block
in such a way that it can be viewed as an intersection of generic and
subgeneric blocks. This intersection procedure should then be
described using only some underlying  combinatorial datum (like for
example the associated integral Weyl group). 

As a result we hope to be able to
construct equivalences between various categories or links to
categories defined in topological terms in the framework of the
geometric Langlands program, and to deduce information on
the respective simple characters. More specifically, we hope to find a
correspondence between certain intersection cohomology sheaves on the
Langlands dual affine flag variety and critical representations.

This paper provides the first steps in the approach described above. Its main result is the calculation of the simple
characters in the subgeneric critical blocks. In order to explain our
result, let us consider the respective categories in some  more detail.

\subsection{Critical representations of affine Kac--Moody algebras}
Let us denote by $\hfg$ an affine Kac--Moody algebra and by
$\hfh\subset\hfg$ its Cartan subalgebra (for the specialists it should
be noted here that we consider the central extension of a loop
algebra together with the grading operator). The one-dimensional
center of $\hfg$ acts semisimply on each module in the category $\CO$. Accordingly, each block of the category $\CO$,
i.e.~each of its indecomposable direct summands, 
determines a central character. There is one such character, called
the {\em critical character}, which is distinguished in more than one
respect. 

In this paper we focus on the following feature of the
critical blocks. Let $\hfhd$ be the dual space of the Cartan
subalgebra and denote by $\delta\in\hfhd$ the smallest positive
imaginary root. Then the corresponding simple highest weight module
$L(\delta)$ is of dimension one, and the tensor product
$\cdot\otimes_\DC L(\delta)$ defines a {\em shift functor} $T$ on
$\CO$ that is an equivalence. Now the critical blocks are exactly
those that are preserved by the functor $T$. This allows us to
consider, for each critical block, the 
corresponding graded center $\CA$ (see Section \ref{sec-gradedcenter}).

The graded center  is huge and  intimately related  to the center of the
(completed) universal enveloping algebra at the critical level, which
was determined in the fundamental work of Feigin and Frenkel
 (cf.~\cite{FeiFre92}) (conjecturally, $\CA$ is a quotient of the
 latter center). We
use the results of Feigin--Frenkel to describe the action of $\CA$ on
the Verma modules contained in the critical blocks.

\subsection{The restricted Verma modules} 
Let  $\Delta(\lambda)$ be the Verma module
with highest weight $\lambda\in\hfhd$. We define the {\em restricted
  Verma module} $\rDelta(\lambda)$ with highest weight $\lambda$ as the quotient of $\Delta(\lambda)$ by
the ideal of $\CA$ generated by the homogeneous
constituents  $\CA_n$
with $n\ne 0$.  For our approach, the restricted Verma
modules, not the ordinary ones, should be considered as the ``standard
objects'' in the critical blocks. 

We denote the irreducible quotient of $\Delta(\lambda)$ by
$L(\lambda)$. Results of Frenkel and
Feigin--Frenkel yield the characters of the restricted Verma modules,
and the knowledge of the character of $L(\lambda)$ for all $\lambda$
is equivalent to the knowledge of the Jordan--Hölder multiplicities
$[\rDelta(\lambda):L(\mu)]$ for all pairs $\lambda,\mu$ of critical
weights. 

\subsection{The Feigin--Frenkel conjecture}
Let us choose a critical indecomposable block $\CO_\Lambda$ of $\CO$
and let us identify the index $\Lambda$ with the subset of highest
weights of the simple modules in $\CO_\Lambda$. Since $\Lambda$ is
critical we have $\lambda+\delta\in\Lambda$ if and only if
$\lambda\in\Lambda$. 

Let $\hCW$ be the affine Weyl group associated with our data, and
$\CW\subset\hCW$ the finite Weyl group. The {\em integral Weyl group}
$\hCW(\Lambda)$ corresponding to $\Lambda$ is generated by the
reflections with respect to those real roots that satisfy a certain
integrality condition with respect to $\Lambda$. In the critical case,
$\hCW(\Lambda)$ is the affinization of the corresponding {\em finite  integral Weyl group} $\CW(\Lambda)\subset\CW$.

We define $\lambda\in\Lambda$ to be {\em dominant} resp.~ {\em
  anti-dominant} if it is
dominant resp.~ anti-dominant with respect to the action of
$\CW(\Lambda)$, i.e.~if it is the highest resp.~ smallest element
in its $\CW(\Lambda)$-orbit. We say that $\lambda$ is {\em regular} if it is
regular with respect to $\CW(\Lambda)$ (note that here we only refer to the finite integral Weyl group).

As mentioned before,  $\hCW(\Lambda)$ is the affinization of
$\CW(\Lambda)$. In \cite{Lus80} Lusztig associated with a pair
$w,x\in\hCW(\Lambda)$ the {\em periodic polynomial}
$p_{x,w}\in\DZ[v]$ (in Lusztig's paper these polynomials were indexed not by affine Weyl group elements, but by alcoves, see Section \ref{subs-FFconj} for more details). The {\em Feigin--Frenkel conjecture} is the
following.

\begin{conjecture} Let $\Lambda$ be a critical equivalence class.
\begin{enumerate}
\item {\em The restricted linkage principle:} For
  $\lambda,\mu\in\Lambda$ we have $[\rDelta(\lambda):L(\mu)]=0$
  unless $\lambda$ and $\mu$ are contained in the same
  $\hCW(\Lambda)$-orbit.
\item {\em The restricted Verma multiplicities:} Suppose that
  $\lambda\in\Lambda$ is regular and dominant. Under some further regularity
  conditions on $\Lambda$ (cf.~Section \ref{conj-FFC}), we have
$$
[\rDelta(w.\lambda):L(x.\lambda)]=p_{w,x}(1)
$$
for all $w,x\in\hCW(\Lambda)$.
\end{enumerate}
\end{conjecture}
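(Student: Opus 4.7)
The plan is to follow the Andersen--Jantzen--Soergel deformation strategy adapted to the critical level: establish each statement first over a suitable completed local base ring where the situation is generic, and only then specialize back to the fiber over the critical point. The principal technical device throughout is the action of the graded center $\CA$ on $\CO_\Lambda$, whose structure is controlled by the Feigin--Frenkel description of the center of the affine vertex algebra at the critical level and its identification with functions on an oper space on the Langlands dual side.

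For part (1), the restricted linkage principle, I would first show that $\CA_0$ separates the $\hCW(\Lambda)$-orbits inside $\Lambda$: on a Verma module $\Delta(\lambda)$ the degree-zero part of the graded center acts through a character depending only on the $\hCW(\Lambda)$-orbit of $\lambda$, because the Feigin--Frenkel isomorphism translates this into a statement about fibers of the oper space, which is invariant under the combinatorial symmetries corresponding to $\hCW(\Lambda)$. Since $\rDelta(\lambda)$ is by definition the quotient of $\Delta(\lambda)$ by the ideal generated by $\CA_n$ for $n\ne 0$, it inherits an $\CA_0$-action still supported on a single orbit, so no simple subquotient attached to another orbit can appear.

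For part (2) the genuinely new content lies in the subgeneric case. There $\CW(\Lambda)$ is generated by one reflection, $\hCW(\Lambda)$ is an affine Weyl group of rank one, and Lusztig's periodic polynomials $p_{w,x}$ admit a closed form that one can read off directly. My plan is to replace each $\rDelta(\lambda)$ by a deformed restricted Verma module over a completed local base, verify that in the generic fiber these deformed Vermas are simple, and then follow the extensions that appear upon specialization to the critical point; keeping track of the $\CA$-action on the deformation precisely governs which $L(x.\lambda)$ arise as composition factors and with what multiplicity. For the anti-dominant case, the idea is that when $\mu\in\Lambda$ is anti-dominant the simple $L(\mu)$ occurs in $\rDelta(w.\lambda)$ essentially as a socle-type contribution, whose multiplicity can be computed by a BGG-style reciprocity applied to the restricted projective cover of $L(\mu)$; the alcove-combinatorial symmetries of Lusztig's polynomials then match the resulting number with $p_{w,x}(1)$.

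The hard part will be the deformation-to-intersection step, namely controlling how a deformed restricted block decomposes after specialization into contributions indexed by its subgeneric walls. In the non-critical case treated in \cite{Fie06} one has a clean translation-functor formalism, but here the tensor product with $L(\delta)$ and the resulting $\CA$-grading distort the standard Bott--Samelson picture, which is precisely why the approach presently only delivers the subgeneric blocks (where there is a single wall and nothing to intersect) and the anti-dominant case (where the relevant multiplicity collapses to a value detectable from a single subgeneric reduction). A full proof of the conjecture for regular dominant $\lambda$ and arbitrary $w,x$ would require a uniform intersection statement in the critical setting, which is beyond what the methods outlined here currently achieve.
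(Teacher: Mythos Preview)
The statement you are addressing is explicitly a \emph{conjecture} in the paper, not a theorem: the paper does not prove it in full, and you correctly concede at the end that a complete argument is out of reach. The paper establishes only the special cases recorded in its Main Theorem --- the anti-dominant multiplicities and the subgeneric blocks --- and defers part~(1), the restricted linkage principle, entirely to the follow-up paper \cite{AF2}. So there is no ``paper's own proof'' of the full statement to compare against.

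For the special cases actually treated, your strategy diverges substantially from the paper's. For the anti-dominant multiplicities you propose a BGG-reciprocity argument via restricted projective covers; the paper instead applies the BRST cohomology functor $H^0(?)$ associated with the quantized Drinfeld--Sokolov reduction. By \cite{Ara07} this functor is exact, sends $\Delta(w.\nu)$ to something with explicitly known graded character, and kills $L(\gamma)$ unless $\gamma$ is anti-dominant; comparing graded characters then yields the multiplicities in one line. Your reciprocity route would require first constructing restricted projectives and computing their Verma multiplicities, which the paper does not do. For the subgeneric dominant case you propose deformation-and-specialization; the paper uses deformation only at one isolated spot (to show the Casimir acts nontrivially on a length-two projective), and the core argument is instead a direct induction on $n$ showing that every subquotient $L(\lambda-n\delta)$ of $\Delta(\lambda)$ comes from a singular vector, by playing the surjectivity of $\CA_n$ on truncated projectives (Propositions~\ref{prop-actonproj} and~\ref{prop-actonprojsub}) against the Feigin--Frenkel count of singular vectors in $\rDelta(\lambda)$ and the Casimir lemma.

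Your sketch for part~(1) also has a gap: claiming that $\CA_0$ acts on $\Delta(\lambda)$ through a character depending only on the $\hCW(\Lambda)$-orbit of $\lambda$ is essentially the linkage principle itself, not a lemma toward it, and the oper-theoretic invariance you invoke is asserted rather than argued. The actual proof in \cite{AF2} is built on the subgeneric structural results of the present paper, not on a direct separation-of-orbits statement for $\CA_0$.
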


The Feigin--Frenkel conjecture fits very well into a broader picture
that was anticipated by Lusztig in his ICM address in 1990 in Kyoto
(cf.~\cite{Lus91}). There, Lusztig conjecturally linked the
representation theory of modular Lie algebras, of quantum groups and
of critical level representations of an affine Kac--Moody algebra to 
the topology of semi-infinite flag manifolds.

In \cite{AF2} we use the results of the present article in order to prove part (1) of the Feigin-Frenkel conjecture, the restricted linkage principle.

\subsection{Our main result}
Let $\fh\subset\hfh$ be the finite part of the Cartan subalgebra and let us denote by $\lambda\mapsto \ol\lambda$ the corresponding restriction map from $\hfhd$ to $\fhd$.
We call a critical class $\Lambda$ {\em subgeneric}, if its image $\ol\Lambda$ in $\fhd$ contains precisely two elements. Then we can define, following
\cite{AJS94}, a bijection $\alpha\uparrow\cdot
\colon\Lambda\to\Lambda$. Here, $\alpha$ denotes the unique positive
finite root with $\ol\Lambda=\{\ol\lambda,s_\alpha.\ol\lambda\}$. 
Here is our result:

\begin{theorem}
 \begin{enumerate}
\item If $\nu\in\Lambda$ is 
anti-dominant, then we have for all
  $\gamma\in\Lambda$  
$$
[\rDelta(\gamma):L(\nu)]=
\begin{cases} 1 & \text{ if
    $\gamma\in\CW(\Lambda).\nu$},\\
0 & \text{ otherwise}.
\end{cases}
$$
\item If $\Lambda$ is subgeneric, then we have for all 
$\gamma,\nu\in\Lambda$
$$
[\rDelta(\gamma):L(\nu)] = 
\begin{cases}
1 &\text{ if $\gamma\in\{\nu,\alpha\uparrow\nu\}$},\\
0 &\text{ otherwise}.
\end{cases}
$$
\end{enumerate}
\end{theorem}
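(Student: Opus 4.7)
The plan is to extract both multiplicity formulas from the Feigin--Frenkel character of $\rDelta(\gamma)$ combined with the action of wall-crossing functors $\theta_s$, $s\in\CW(\Lambda)$, on the category of modules admitting a restricted Verma flag, as developed in the earlier sections of the paper.

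For part (1), the pivotal initial step is to show that $\rDelta(\nu)=L(\nu)$ whenever $\nu$ is anti-dominant. The Feigin--Frenkel formula describes $\cha\rDelta(\nu)$ explicitly, and any composition factor $L(\mu)$ of $\rDelta(\nu)$ satisfies $\mu\le\nu$ in the standard partial order with $\ol\mu\in\CW(\Lambda).\ol\nu$; the second condition comes from the description of the action of $\CA$ on ordinary Vermas, which forces the central character of a new constituent to be $\CW(\Lambda)$-compatible. Anti-dominance then forces $\mu=\nu$. To propagate to the other $\gamma\in\CW(\Lambda).\nu$, I apply wall-crossing: for a simple reflection $s\in\CW(\Lambda)$ with $s.\eta\ne\eta$, $\theta_s$ sends $\rDelta(\eta)$ to a module with a two-step restricted Verma flag whose subquotients are $\rDelta(\eta)$ and $\rDelta(s.\eta)$, while $\theta_sL(\nu)$ is self-dual with a character computable from the known character of $L(\nu)=\rDelta(\nu)$. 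Induction on the length of $w\in\CW(\Lambda)$ with $\gamma=w.\nu$ then yields $[\rDelta(\gamma):L(\nu)]=1$ for all $\gamma\in\CW(\Lambda).\nu$; vanishing for $\gamma$ outside the finite orbit is immediate from the central-character/support constraint.

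For part (2), the subgeneric hypothesis $\CW(\Lambda)=\{1,s_\alpha\}$ makes $\hCW(\Lambda)$ the affinization of a rank-one group, and its orbits in $\Lambda$ are doubly-infinite chains under the $\uparrow$-order. Part (1), applied to the anti-dominant element $\nu_0$ of the finite $\CW(\Lambda)$-orbit of a given $\nu$, together with the identification $\alpha\uparrow\nu_0=s_\alpha.\nu_0$ in this setting, pins down $\rDelta(\nu_0)=L(\nu_0)$ and $[\rDelta(\alpha\uparrow\nu_0):L(\nu_0)]=1$. Descending along the chain using successive applications of $\theta_{s_\alpha}$, the character identity
$$\cha\rDelta(\gamma)=\sum_{\mu}[\rDelta(\gamma):L(\mu)]\,\cha L(\mu)$$
is uniquely solvable by induction along the $\uparrow$-chain, and forces $\rDelta(\gamma)$ to have exactly the two composition factors $L(\gamma)$ and the $L(\nu)$ with $\alpha\uparrow\nu=\gamma$, each with multiplicity one.

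The main technical obstacle is the control of $\theta_s$ on the restricted category at the critical level: because the graded centre $\CA$ is enormous, the classical projective-functor analysis does not apply, and one must rely on the AJS-style deformation machinery built in the earlier sections to establish that $\theta_s$ preserves restricted Verma flags with the expected two-step subquotient structure. A secondary subtlety is that in the subgeneric case the $\hCW(\Lambda)$-orbit has no global anti-dominant element, so one must reduce to the finite $\CW(\Lambda)$-orbit level before invoking part (1); the identification $\alpha\uparrow\nu_0=s_\alpha.\nu_0$ for anti-dominant $\nu_0$ is precisely what bridges the two parts of the theorem.
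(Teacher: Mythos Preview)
Your proposal rests on two tools that are not available at this point in the paper, and invoking them makes the argument either circular or unfounded.

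First, you assume a restricted linkage principle when you assert that every composition factor $L(\mu)$ of $\rDelta(\nu)$ satisfies $\ol\mu\in\CW(\Lambda).\ol\nu$. This is precisely part (1) of Conjecture~\ref{conj-FFC}, and the paper states explicitly that it is proved in \cite{AF2} \emph{using the results of the present article}. Assuming it here is circular. Even granting it, anti-dominance of $\nu$ with respect to the finite group $\CW(\Lambda)$ does not rule out composition factors $L(\nu-n\delta)$ for $n>0$, since $\ol{\nu-n\delta}=\ol\nu$; so your argument that $\rDelta(\nu)=L(\nu)$ does not close.

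Second, and more seriously, the wall-crossing functors $\theta_s$ you invoke are not constructed anywhere in the paper; there is no ``AJS-style deformation machinery'' in the earlier sections that produces them. Building translation or wall-crossing functors at the critical level is a genuine difficulty (the usual block-projection arguments break down precisely because the graded center $\CA$ is large and the block structure is governed by the full $\hCW(\Lambda)$-orbit rather than a single central character), and the paper does not attempt it.

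The paper's actual route is entirely different. For part (1) it applies the exact BRST-cohomology functor $F$ of Section~\ref{sec-BRST}: Theorem~\ref{Th:main-of-Ara} gives $\cha_q F(\Delta(w.\nu))$ explicitly and shows $F(L(\gamma))\ne 0$ only for anti-dominant $\gamma$, so the identity $\cha_q F(\Delta(w.\nu))=\sum_\gamma[\Delta(w.\nu):L(\gamma)]\cha_q F(L(\gamma))$ immediately yields $[\Delta(w.\nu):L(\nu-n\delta)]=p(n)$. For part (2) the paper runs an induction on $n$ showing $[\Delta(\lambda):L(\lambda-n\delta)]=\dim\Delta(\lambda)^{\hfn_+}_{\lambda-n\delta}$ for dominant $\lambda$; the key inputs are the analysis of the $\CA$-action on truncated projectives (Propositions~\ref{prop-actonproj} and \ref{prop-actonprojsub}), the Casimir argument of Lemma~\ref{lemma-Casimir}, and the primitive-vector count of Lemma~\ref{lemma-prim}. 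No wall-crossing is used.
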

The above theorem confirms the Feigin--Frenkel conjecture in the
respective cases. In \cite{AF2} we use it in order to describe the structure of the  {\em restricted} category $\rCO_\Lambda$ completely for subgeneric $\Lambda$. As for generic $\Lambda$ the structure of $\rCO_\Lambda$ is easy to determine  using the results of Feigin and Frenkel (see also \cite{Hay88,Ku89,Mal90,Mat96}, we completed the first part of the Andersen-Jantzen-Soergel approach towards the description of the category $\CO$ at the critical level. 

\subsection{Acknowledgments} Both authors wish to thank the Emmy
Noether Center in Erlangen, where parts of the research for this paper
were done, for its hospitality and
support. The second author wishes to thank Nara Women's University for its hospitality and the Landesstiftung Baden--W\"urttemberg for supporting the project.

\section{Affine Kac--Moody algebras}

In this section we recall  the fundamentals of the theory of affine Kac--Moody algebras. The main references are the
textbooks \cite{Kac90} and \cite{MP95}. Our basic data is a finite
dimensional simple complex Lie algebra $\fg$. We denote by
$k\colon \fg\times \fg\to \DC$ its Killing form. 

\subsection{The construction of $\hfg$}
From $\fg$ we construct the  (untwisted) affine Kac--Moody algebra
$\hfg$ as follows. We first consider the
loop algebra $\fg\otimes_\DC \DC[t,t^{-1}]$ for  which the commutator is
the $\DC[t,t^{-1}]$-linear extension  of the commutator of $\fg$. That
means that we have $[x\otimes t^n,y\otimes t^m]=[x,y]\otimes t^{m+n}$ for
$x,y\in\fg$ and $m,n\in\DZ$. The loop algebra has an up to isomorphism
unique non-split central extension $\tfg$ of rank one. As a
vector space we have 
$\tfg=\fg\otimes_{\DC}\DC[t,t^{-1}]\oplus \DC K$, and the Lie bracket is given by
\begin{align*}
[K,\tfg] &= 0, \\
[x\otimes t^n,y\otimes t^m] &=[x,y]\otimes t^{m+n}+
n\delta_{m,-n}k(x,y) K
\end{align*} 
for $x,y\in\fg$, $n,m\in\DZ$ (here $\delta_{a,b}$ denotes the
Kronecker delta). In the last step of the construction we add the
outer derivation operator $[D,\cdot]=t\frac{\partial}{\partial t}$ and get the affine Kac--Moody algebra $\hfg:=\tfg\oplus \DC D$
with the Lie bracket 
\begin{align*}
[K,\hfg] & = 0, \\
[D,x\otimes t^n] &  = n x\otimes t^n, \\
[x\otimes t^n, y\otimes t^m] & = [x,y]\otimes t^{m+n}+
n\delta_{m,-n}k(x,y) K
\end{align*}
for $x,y\in \fg$, $n,m\in \DZ$.

Let us fix a Borel subalgebra $\fb\subset \fg$ and a Cartan subalgebra
$\fh\subset \fb$. Then the corresponding Borel subalgebra $\hfb$ of $\hfg$ is given by
$$
\hfb  := (\fg \otimes_\DC t\DC[t]+\fb\otimes_\DC \DC[t])\oplus \DC
K\oplus\DC D 
$$
and the Cartan subalgebra $\hfh\subset \hfb$ is given by 
$$
\hfh  := \fh\oplus\DC K\oplus \DC D.
$$

\subsection{Affine roots} We denote by $V^\star$ the dual of a vector space $V$ and we write
$\langle\cdot,\cdot\rangle\colon V^\star\times V\to \DC$ for the
canonical pairing. 
Let $R \subset \fhd$ be the set of roots of $\fg$ with respect to $\fh$. The projection $\hfh\to \fh$ along the decomposition $\hfh=\fh\oplus \DC K\oplus \DC D$ allows us to embed $\fhd$ inside $\hfhd$. In particular, we can view any $\alpha\in R$ as an element in $\hfhd$. Let us define  $\delta,\Lambda_0\in \hfhd$ by 
\begin{align*}
\langle\delta,\fh\oplus \DC K\rangle & = \{0\}, \\ 
\langle\delta,D\rangle & = 1, \\
\langle\Lambda_0,\fh\oplus \DC D\rangle & = \{0\},\\
\langle\Lambda_0,K\rangle & = 1.
\end{align*}
Then we have $\hfhd=\fhd\oplus\DC\Lambda_0\oplus\DC\delta$. The set $\hR\subset \hfhd$ of roots of $\hfg$ with respect to $\hfh$ is 
$$
\hR=\{\alpha+n\delta \mid \alpha\in R,n\in\DZ\}\cup\{n\delta\mid n\in \DZ, n\ne 0\}.
$$
For $\alpha\in R$ let us denote by $\fg_\alpha\subset\fg$ the
corresponding root space. The root spaces of $\hfg$ with respect to $\hfh$ are 
\begin{align*}
\hfg_{\alpha+n\delta} &= \fg_\alpha\otimes t^n\quad\text{for
  $\alpha\in R$, $n\in\DZ$},\\
\hfg_{n\delta} &= \fh\otimes t^n\quad\text{for $n\in\DZ$, $n\ne 0$}.
\end{align*}
The subsets 
\begin{align*}
\hR^{\real} &:=\{\alpha+n\delta \mid \alpha\in R, n\in\DZ\},\\
\hR^{\imag} &:=  \{n\delta\mid n\in \DZ, n\ne 0\}
\end{align*}
are called the sets of {\em real} roots and of {\em imaginary} roots,
resp.

Let $ R^+\subset  R$ be the set of roots of $\fb$ with respect
to $\fh$. Then the set $\hR^+$ of roots of $\hfb$ with respect to $\hfh$ is 
$$
\hR^{+}=\{\alpha+n\delta\mid \alpha\in R, n\ge 1\}\cup
 R^{+}\cup \{n\delta\mid n\ge 1\}.
$$
Let $\Pi\subset R^+$ be the set of simple roots and denote by
$\gamma\in R^+$ the highest root. Then the set of  simple affine roots is 
$$
\hPi=\Pi\cup\{-\gamma+\delta\}\subset \hR^+.
$$

For each real root $\alpha\in \hR^{\real}$ the corresponding root space
$\hfg_{\alpha}$ is one-dimensional, and so is the commutator
$[\hfg_{\alpha},\hfg_{-\alpha}]\subset \hfh$. The {\em (affine) coroot
  $\alpha^\vee$} associated with $\alpha$ is the unique element in
$[\hfg_{\alpha},\hfg_{-\alpha}]$ on which $\alpha$ takes the value
$2$. Note that $\alpha^\vee$ is contained in $\fh\oplus\DC K$, so we
have $\langle\delta,\alpha^\vee\rangle=0$.

\subsection{The Weyl groups}  
For $\alpha\in \hR^{\real}$ we define the reflection $s_\alpha\colon \hfhd\to \hfhd$ by $s_\alpha(\lambda):=\lambda-\langle\lambda,\alpha^\vee\rangle\alpha$. 
We denote by $\hCW\subset \GL(\hfhd)$ the affine Weyl group, i.e.~the
subgroup generated by the reflections $s_\alpha$ for
$\alpha\in \hR^+$. The subgroup $\CW\subset\hCW$ generated by the
reflections $s_\alpha$ with $\alpha\in R$  leaves the subset
$\fhd\subset \hfhd$ stable and can be identified with the Weyl
group of $\fg$.

Let $\rho\in\hfhd$ be an element with the property
$\langle \rho,\alpha^\vee\rangle=1$ for each simple affine root $\alpha$. Note that
$\rho$ is only defined up to the addition of a multiple of $\delta$
(the span of the affine coroots is $\fh\oplus\DC K$, and the simple
coroots form a basis in this space). Yet all constructions in the following that
use $\rho$ do not depend on this choice. So let us fix such an
element $\rho$ once and for all.

 The {\em dot-action} $\hCW\times\hfhd\to\hfhd$,
  $(w,\lambda)\mapsto w.\lambda$, of the affine Weyl group on
  $\hfhd$ is obtained by shifting the linear action in such a way that
  $-\rho$ becomes a fixed point, i.e.~ it is given by
$$
w.\lambda:=w(\lambda+\rho)-\rho
$$
for $w\in\hCW$ and $\lambda\in \hfhd$. Note that since
$\langle\delta,\alpha^\vee\rangle=0$ we have 
$s_\alpha(\delta)=\delta$ for all
$\alpha\in \hR^{\real}$. Hence 
$w(\delta)=\delta$ for all $w\in\hCW$ (so the dot-action is
independent of the choice of $\rho$, as we claimed above).  

\subsection{The invariant bilinear form}
Denote by $(\cdot,\cdot)\colon \hfg\times\hfg\to\DC$  the form given by 
\begin{align*}
(x\otimes t^n, y\otimes t^m) &= \delta_{n,-m} k(x,y), \\
(K, \fg\otimes_\DC \DC[t,t^{-1}]\oplus \DC K) &= \{0\}, \\
(D, \fg\otimes_\DC \DC[t,t^{-1}]\oplus \DC D) &= \{0\}, \\
(K,D) &= 1
\end{align*}
for $x,y\in \fg$, $m,n\in\DZ$. It is  non-degenerate, symmetric and {\em invariant}, i.e.~
it satisfies   $([x,y],z)=(x,[y,z])$ for all
$x,y,z\in\hfg$. Moreover, it induces a non-degenerate bilinear form on the Cartan
subalgebra $\hfh$ and hence yields an isomorphism $\hfh\stackrel{\sim}\to\hfhd$, which is the direct sum of the isomorphism $\fh\to\fhd$ given by the Killing form $k$ and the isomorphism $\DC K\oplus \DC D\to \DC\Lambda_0\oplus \DC \delta$ that maps $K$ to $\delta$ and $D$ to $\Lambda_0$.  We get an induced symmetric
non-degenerate form on the  dual $\hfhd$ that is given explicitly by
\begin{align*}
(\alpha,\beta) &= k(\alpha,\beta), \\
(\Lambda_0, \fhd\oplus \DC\Lambda_0) &= \{0\}, \\
(\delta, \fhd\oplus \DC\delta) &= \{0\}, \\
(\Lambda_0,\delta) &= 1
\end{align*}
for $\alpha,\beta\in \fhd$ (here we denote by $k$ also the form on $\fhd$ that is induced by the Killing form).
It is
invariant under the linear action of the affine Weyl group, i.e.~ we have
$$
(w(\lambda),w(\mu))=(\lambda,\mu) 
$$
for $w\in\hCW$, $\lambda,\mu\in\hfhd$.

\section{The category $\CO$ for an affine Kac--Moody algebra}

Having recalled the fundamental structural results for an affine
Kac--Moody algebra  we now turn to its representation theory. We
restrict ourselves to representations in the affine category $\CO$.  

\subsection{The category $\CO$}
Let $M$ be a $\hfg$-module. Its {\em weight space} corresponding to
$\lambda\in\hfhd$ is  
$$
M_\lambda:=\{m\in M\mid h.m=\langle\lambda,h\rangle m\text{ for all $h\in \hfh$}\}.
$$
Any non-zero element $m\in M_\lambda$ is said to be {\em of weight $\lambda$}.
We say that  $M$ is a {\em weight module} if
$M=\bigoplus_{\lambda\in\hfhd} M_\lambda$. 
 We say that  $M$ is  {\em locally $\hfb$-finite} if all finitely generated $\hfb$-submodules of $M$ are finite dimensional.
The {\em affine category $\CO$} is defined  as the full subcategory of the category of $\hfg$-modules that consists of all locally $\hfb$-finite weight modules.

\subsection{Highest weight modules}
Our choice of positive roots defines a partial order on $\hfhd$: we set $\nu\varle\nu^\prime$ if and only if
$\nu^\prime-\nu\in\DZ_{\ge 0}\hR^+$. 
A {\em highest weight module} of highest weight $\lambda\in \hfhd$ is
a $\hfg$-module $M$ that contains  a generator $v\ne 0$  of weight
$\lambda$ such that 
$\hfg_\alpha v=0$ for all $\alpha\in\hR^+$. Then $\lambda$ is indeed
the highest weight of $M$, i.e.~ $M_\mu\ne 0$ implies
$\mu\varle\lambda$.  Each highest weight module is contained in $\CO$.

For $\lambda\in\hfhd$ denote by $\DC_\lambda$ the one-dimensional
$\hfh$-module corresponding to $\lambda$. We extend the $\hfh$-action
to a $\hfb$-action using the homomorphism $\hfb\to\hfh$ of Lie
algebras that is left inverse to the inclusion $\hfh\subset \hfb$. That
means that $\hfg_\alpha$ acts trivially on $\DC_\lambda$ for all
$\alpha\in\hR^+$. The induced module
$$
\Delta(\lambda):=U(\hfg)\otimes_{U(\hfb)}\DC_\lambda
$$
is called the {\em Verma module} corresponding to $\lambda$. It
contains a unique simple quotient $L(\lambda)$, and both
$\Delta(\lambda)$ and $L(\lambda)$ are highest
weight modules of highest weight $\lambda$.

Moreover, the modules $L(\lambda)$ for $\lambda\in\hfhd$ form a
full set of representatives of the simple isomorphism classes of
$\CO$, i.e.~each simple object in $\CO$ is isomorphic to
$L(\lambda)$ for a unique $\lambda\in\hfhd$.

\subsection{Characters}
Let $\DZ[\hfhd]=\bigoplus_{\lambda\in \hfhd}\DZ e^{\lambda}$ be the group algebra of the additive group $\hfhd$. Let $\widehat{\DZ[\hfhd]}\subset \prod_{\lambda\in\hfhd} \DZ e^{\lambda}$ be the subgroup of elements $(c_\lambda)$ that have the property that there exists a finite set $\{\mu_1,\dots, \mu_n\}\subset\hfhd$ such that $c_\lambda\ne 0$ implies $\lambda\le \mu_i$ for at least one $i$.  

Let $\CO^f\subset \CO$ be the full subcategory of modules $M$ that have
the property that the weight spaces $M_\lambda$ are finite dimensional
and such that there exist  $\mu_1,\dots, \mu_n\in \hfhd$ such  that
$M_\lambda\ne 0$ implies $\lambda\le \mu_i$ for at least one $i$. For
each object $M$ of $\CO^f$ we can then define its {\em character} as
$$
\cha M:=\sum_{\lambda\in \hfhd} (\dim_\DC M_\lambda) e^\lambda\in \widehat{\DZ[\hfhd]}.
$$

The character of a Verma module is easy to calculate. For each $\lambda\in\hfhd$ we have
$$
\cha \Delta(\lambda)=e^{\lambda} \prod_{\alpha\in \hR^+}(1+e^{-\alpha}+e^{-2\alpha}+\dots)^{\dim\fg_\alpha}.
$$
(The above product is well-defined in $\widehat{\DZ[\hfhd]}$.) If $\lambda\in\hfhd$ is non-critical (cf.~ Section \ref{sec-crithyp}), the character of $L(\lambda)$ is known (cf.~\cite{KT00,Fie06}). The principal aim of our research project is to calculate $\cha L(\lambda)$ for the critical highest weights $\lambda$.

\subsection{Multiplicities}

Suppose again that $M$ is an object in $\CO^f$. Then there are well defined numbers $a_\nu\in \DN$ such that
$$
\cha M=\sum_{\nu\in\hfhd} a_\nu \cha L(\nu)
$$
(cf.~\cite{DGK82}). Note that the sum on the right hand side is, in
general, an infinite sum. We define the {\em multiplicity of $L(\nu)$ in $M$} as
$$
[M:L(\nu)]:= a_\nu.
$$
The matrix $[\Delta(\lambda):L(\mu)]$ is invertible, so the problem of calculating $\cha L(\mu)$ for all $\mu\in\hfhd$ is equivalent to the calculation of the multiplicities $[\Delta(\lambda):L(\mu)]$ for all $\lambda,\mu\in\hfhd$.

\subsection{Block decomposition}

Denote by ``$\sim$'' the equivalence relation on $\hfhd$ that is generated
by $\lambda\sim\mu$ if $[\Delta(\lambda):L(\mu)]\ne 0$. For an equivalence class $\Lambda\in \hfhd/_{\textstyle{\sim}}$ we define the
full subcategory $\CO_\Lambda$ of $\CO$ that consists of all objects
$M$ whose irreducible subquotients are isomorphic to $L(\lambda)$ for some $\lambda\in\Lambda$. We have the following decomposition result.

\begin{theorem}[\cite{DGK82,RCW82}] The functor
\begin{align*}
\prod_{\Lambda\in \hfhd/_{\scriptstyle{\sim}}} \CO_\Lambda & \to \CO, \\
(M_\Lambda) & \mapsto \bigoplus_{\Lambda\in\hfhd/_{\scriptstyle{\sim}}} M_\Lambda
\end{align*}
is an equivalence of categories. 
\end{theorem}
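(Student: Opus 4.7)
The plan is to exhibit a quasi-inverse to the direct sum functor by sending $M\in\CO$ to the tuple $(M^\Lambda)_\Lambda$, where $M^\Lambda\subseteq M$ denotes the sum of all submodules whose composition factors all have highest weight in $\Lambda$. Then $M^\Lambda\in\CO_\Lambda$, and the assignment is functorial in $M$. The theorem reduces to showing that the natural inclusion $\bigoplus_\Lambda M^\Lambda\hookrightarrow M$ is an equality, from which fullness and faithfulness follow since any $\hfg$-morphism preserves block composition factors.

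The main technical input is the \emph{extension vanishing} $\Ext^1_\CO(L(\lambda),L(\mu))=0$ whenever $\lambda\not\sim\mu$. Given a non-split extension $0\to L(\mu)\to E\to L(\lambda)\to 0$, I distinguish two cases. If $\lambda\not<\mu$, lift the generator of $L(\lambda)$ to $\widetilde v\in E_\lambda$; then for every $\alpha\in\hR^+$ the weight $\lambda+\alpha$ cannot be a weight of $L(\mu)$ (else $\lambda<\lambda+\alpha\leq\mu$, contradicting $\lambda\not<\mu$), so $\hfg_\alpha\widetilde v=0$. Hence $U(\hfg)\widetilde v$ is a highest weight submodule of weight $\lambda$, and non-splitness forces it to coincide with $E$; then $L(\mu)$ appears as a composition factor of $\Delta(\lambda)$, contradicting $\lambda\not\sim\mu$. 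If instead $\lambda<\mu$, one invokes the contragredient duality functor on $\CO^f$ (well-defined on $E$ since its weight spaces are finite-dimensional), which swaps sub and quotient while fixing each simple $L(\cdot)$, reducing to the previous case with the roles of $\lambda$ and $\mu$ exchanged.

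To conclude, I must show $M=\bigoplus_\Lambda M^\Lambda$ for every $M\in\CO$. Local $\hfb$-finiteness guarantees that every nonzero submodule of $M$ contains a simple subobject: any nonzero vector generates a finite-dimensional $\hfb$-stable subspace whose maximal-weight vectors are highest weight vectors of $M$. By Zorn's lemma I pick a submodule $M'\subseteq M$ maximal subject to decomposing as $\bigoplus_\Lambda N^\Lambda$ with $N^\Lambda\in\CO_\Lambda$. If $M'\neq M$, the quotient contains some simple $L(\nu)$; pulling back yields $0\to M'\to M''\to L(\nu)\to 0$. Using that $\Ext^1_\CO(L(\nu),N^\Lambda)=0$ for every $\Lambda\neq[\nu]$ (obtained by iterating the simple-to-simple vanishing along local composition series of $N^\Lambda$), this extension splits off the entire summand $\bigoplus_{\Lambda\neq[\nu]}N^\Lambda$, leaving an extension $0\to N^{[\nu]}\to M'''\to L(\nu)\to 0$ lying in $\CO_{[\nu]}$; assembling produces a submodule of $M$ strictly larger than $M'$ and of the required form, contradicting maximality. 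The hardest step will be this last one: block-by-block splitting must accommodate infinitely many blocks simultaneously, and a naive finite-length induction is unavailable, so one must rely on the local composition series afforded by the local $\hfb$-finiteness together with a transfinite argument.
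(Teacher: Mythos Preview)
The paper does not prove this theorem; it is stated with attribution to \cite{DGK82,RCW82} and no argument is supplied. So there is nothing in the paper to compare your proposal against, and I can only comment on its internal merits.

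Your simple-to-simple vanishing $\Ext^1_\CO(L(\lambda),L(\mu))=0$ for $\lambda\not\sim\mu$ is correct and cleanly argued; the two-case split with contragredient duality is the standard move, and your remark that $E\in\CO^f$ so that duality applies is the right justification. The genuine difficulty, which you yourself flag, is the passage from this to $\Ext^1_\CO(L(\nu),N)=0$ for an \emph{arbitrary} $N\in\CO$ with no composition factor in $[\nu]$. Your proposed mechanism, ``iterating the simple-to-simple vanishing along local composition series together with a transfinite argument'', is not yet a proof. Objects of $\CO$ as defined in this paper (locally $\hfb$-finite weight modules, with \emph{no} finiteness hypothesis on weight spaces or on the set of weights from above) need not have finite length, and the long exact sequence only lets you induct along finite filtrations with simple subquotients. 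A transfinite composition series does exist (every nonzero object of $\CO$ contains a simple submodule), but at limit ordinals you would need $\Ext^1_\CO(L(\nu),-)$ to commute with the relevant filtered colimit, and $L(\nu)$ is not a compact object of $\CO$ in general, so this is not automatic. A second, independent colimit issue arises when you pass from the individual $N^\Lambda$ to the possibly infinite direct sum $\bigoplus_{\Lambda\neq[\nu]}N^\Lambda$: even if each $\Ext^1_\CO(L(\nu),N^\Lambda)$ vanishes, you have not explained why the extension by the full sum splits.

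Both places where infinitude enters are exactly where your sketch becomes hand-waving, and the cited references do spend real effort here. If you want to complete the argument along your lines, one way forward is to exploit the finiteness that \emph{is} present: given an extension $0\to N\to E\to L(\nu)\to 0$, any lift $\tilde v\in E_\nu$ of the highest weight vector has $U(\hfn_+)\tilde v$ finite-dimensional by local $\hfb$-finiteness, so only finitely many weights above $\nu$ are involved; one can then reduce to a finitely generated sub-extension and argue more directly, rather than invoking an abstract transfinite induction whose limit steps are the whole problem.
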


\subsection{The Kac--Kazhdan theorem}
The following theorem gives an explicit description of the equivalence relation ``$\sim$'' on $\hfhd$. Let us denote by ``$\preceq$'' the partial order on $\hfhd$ generated by
$\nu\preceq\lambda$ if there exist $n\in\DN$ and $\beta\in \hR^+$
such that $2(\lambda+\rho,\beta)=n(\beta,\beta)$ and
$\nu=\lambda-n\beta$. In particular, $\nu\preceq\lambda$ implies
$\nu\varle\lambda$, but the converse is not true.

\begin{theorem}[\cite{KK79}] We have $[\Delta(\lambda):L(\nu)]\ne 0$ if and
  only if $\nu\preceq\lambda$.
\end{theorem}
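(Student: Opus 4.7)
The plan is to follow the classical Kac--Kazhdan strategy, which reduces the problem to a computation of the determinant of the Shapovalov form. I sketch the main ingredients below.

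The starting point is the Shapovalov form $S^\lambda$ on $\Delta(\lambda)$. Using the PBW decomposition of $U(\hfg)$ relative to the triangular decomposition $\hfg=\hfn^-\oplus\hfh\oplus\hfn^+$, let $\sigma\colon U(\hfg)\to U(\hfh)$ be the Harish-Chandra projection, and define $S^\lambda(u_1 v_\lambda, u_2 v_\lambda) := \langle\lambda,\sigma(\tau(u_1) u_2)\rangle$ for $u_1,u_2\in U(\hfn^-)$, where $\tau$ is the Chevalley antiinvolution and $v_\lambda$ the canonical generator. The radical of $S^\lambda$ is precisely the maximal proper submodule of $\Delta(\lambda)$, so detecting composition factors reduces to analysing the weight spaces on which $S^\lambda$ becomes degenerate. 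In particular, $\Delta(\lambda)$ is simple iff $\det S^\lambda_\mu\neq 0$ for every $\mu\in\DZ_{\ge0}\hR^+$, where $S^\lambda_\mu$ is the restriction to $\Delta(\lambda)_{\lambda-\mu}$.

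The heart of the argument is the explicit product formula for $\det S^\lambda_\mu$ as a polynomial in $\lambda$:
\[
\det S^\lambda_\mu \;=\; c_\mu \prod_{\beta\in\hR^+}\prod_{n\ge 1}\bigl(2(\lambda+\rho,\beta)-n(\beta,\beta)\bigr)^{m_\beta P(\mu-n\beta)},
\]
where $m_\beta=\dim\hfg_\beta$, $P$ is Kostant's partition function (summed appropriately for imaginary roots) and $c_\mu$ is a nonzero constant. I would establish this formula in two stages. First, determine the linear factors: the highest-weight vector of a putative embedding $\Delta(\lambda-n\beta)\hookrightarrow\Delta(\lambda)$ exists only when $2(\lambda+\rho,\beta)=n(\beta,\beta)$, which pins down all possible zero-loci of the determinant. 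Second, to pin down the multiplicities one compares leading terms and exploits specializations to generic $\lambda$ where only one factor vanishes to first order. The affine case requires additional care for imaginary roots $n\delta$, treated via the explicit action of $\hfh\otimes t^{\pm n}$ and the known structure of Heisenberg representations.

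Granting the determinant formula, the theorem is obtained as follows. For the ``if'' direction, assume first the elementary step $\nu=\lambda-n\beta$ with $2(\lambda+\rho,\beta)=n(\beta,\beta)$; then the corresponding factor in the determinant formula vanishes, so the radical of $S^\lambda$ has a nontrivial component in weights $\le \lambda-n\beta$. Picking a maximal such weight yields a singular vector, which generates a Verma submodule of the form $\Delta(\nu')$ for some $\nu'\succeq\nu$. Iterating inside $\Delta(\nu')$ eventually produces $\Delta(\nu)$ as a subquotient, whence $[\Delta(\lambda):L(\nu)]\neq 0$; the general case follows by taking the transitive closure. For the ``only if'' direction, one shows conversely that every simple subquotient of $\Delta(\lambda)$ arises through such a chain: if $L(\nu)$ is a composition factor with $\nu$ maximal among such weights with $\nu<\lambda$, then $\Delta(\nu)\hookrightarrow\Delta(\lambda)$, so the Shapovalov determinant vanishes at $\lambda$ on a weight space reaching $\nu$, forcing $2(\lambda+\rho,\beta)=n(\beta,\beta)$ with $\nu=\lambda-n\beta$ for some $\beta\in\hR^+$ and $n\ge 1$; iterate in $\Delta(\nu)$.

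The genuinely hard step is the determinant formula, and within it the treatment of the imaginary roots, which have infinite-dimensional root spaces and whose associated Heisenberg subalgebra produces higher-multiplicity contributions that must be identified exactly. Once that formula is in hand, the equivalence with $\nu\preceq\lambda$ is a more or less formal consequence of the detection of singular vectors by vanishing factors.
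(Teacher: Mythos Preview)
The paper does not prove this statement; it is simply quoted as the Kac--Kazhdan theorem with a reference to \cite{KK79}, and then used as input for the description of the equivalence relation $\sim$. So there is no ``paper's proof'' to compare against beyond the citation itself. Your proposal is a sketch of the original Kac--Kazhdan argument via the Shapovalov determinant, which is indeed the standard route.

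Two points of correction in your sketch, however. First, the imaginary root spaces are \emph{not} infinite-dimensional: in the affine case $\hfg_{n\delta}=\fh\otimes t^n$ has dimension $\rank\,\fg$. What makes the imaginary roots delicate is rather that their multiplicity exceeds one and that $(n\delta,n\delta)=0$, so the would-be linear factor $2(\lambda+\rho,n\delta)-m(n\delta,n\delta)=2(\lambda+\rho,n\delta)$ is independent of $m$; this is what forces a separate Heisenberg analysis to pin down the exponents.

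Second, in your ``only if'' direction the passage from ``$\det S^\lambda_{\lambda-\nu}=0$'' to ``$\nu=\lambda-n\beta$ with $2(\lambda+\rho,\beta)=n(\beta,\beta)$'' is too quick. Vanishing of the determinant only says $\lambda$ lies on the union of the hyperplanes indexed by pairs $(\beta,n)$ with $n\beta\le\lambda-\nu$; it does not by itself single out a pair with $n\beta=\lambda-\nu$. In \cite{KK79} this gap is closed by a Jantzen-type deformation argument (working over $\DC[t]$ with $\lambda$ replaced by $\lambda+t\rho$ and analysing the order of vanishing), which you have not mentioned. Without that ingredient the ``only if'' direction is incomplete.
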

In particular, the equivalence relation ``$\sim$'' is generated by the partial
order relation ``$\preceq$''. For $\Lambda\in\hfhd/_{\textstyle{\sim}}$ set 
\begin{align*}
\hR(\Lambda)  &:=\{\alpha\in\hR\mid
2(\lambda+\rho,\alpha)\in \DZ(\alpha,\alpha) \text{ for some  $\lambda\in\Lambda$}\}\\ 
&=\{\alpha\in\hR\mid
2(\lambda+\rho,\alpha)\in \DZ(\alpha,\alpha) \text{ for all  $\lambda\in\Lambda$}\}
,\\
\hCW(\Lambda) & :=\langle s_{\alpha}\mid \alpha\in
\hR(\Lambda)\cap\hR^{\real}\rangle\subset\hCW.
\end{align*}

If $\Lambda$ is {\em non-critical}, i.e.~ if
$\hR(\Lambda)\subset\hR^{\real}$, then $\Lambda=\hCW(\Lambda).\lambda$
for each $\lambda\in\Lambda$. In this case, the structure of the block
$\CO_{\Lambda}$ can be completely described in terms of the group
$\hCW(\Lambda)$ (which turns out to be a Coxeter group) and the
singularity of its orbit $\Lambda$, cf.~\cite{Fie06}.

\subsection{A duality on $\CO^f$}
We will later  need the following duality functor. For convenience
we only define it on the full subcategory $\CO^f$ of $\CO$ that we
defined earlier. All the
modules that we encounter in  this article belong to $\CO^f$.

For $M\in\CO^f$ we set $M^\star:=\bigoplus_{\lambda\in\hfhd}
\Hom_\DC(M_\lambda,\DC)$. We endow  $M^\star$ with the  action of
$\hfg$ given by
$$
(x.\phi)(m)=\phi(-\omega(x).m),
$$
for $x\in\hfg$, $\phi\in M^\star$ and $m\in M$, where
$\omega\colon\hfg\to\hfg$ is the Chevalley-involution, i.e.~the
involution induced on $\hfg$ by the root system automorphism that
sends $\alpha\in \hR$ to $-\alpha\in \hR$ (cf.~\cite[Section 1.3]{Kac90}). Then $M^\star\in\CO^f$ and we
indeed get a  duality functor on $\CO^f$. It is exact and maps irreducible modules
to irreducible modules. A quick look at characters shows
the following.
\begin{lemma} For each $\lambda\in\hfhd$ we have
  $L(\lambda)^\star=L(\lambda)$.
\end{lemma}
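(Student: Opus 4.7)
The plan is to reduce the claim to a character computation, using that the duality functor on $\CO^f$ is exact and sends simple modules to simple modules (as stated in the setup just before the lemma). First, I would observe that $L(\lambda)$ does belong to $\CO^f$: it is a quotient of $\Delta(\lambda)$, whose weight spaces are finite-dimensional (evident from the explicit character formula) and whose weights all lie below $\lambda$. So $L(\lambda)^\star$ is well-defined.

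The key step is to show that $\cha M^\star = \cha M$ for every $M \in \CO^f$. For this I would analyze the $\hfh$-weight decomposition of $M^\star$. The Chevalley involution $\omega$ acts on $\hfh$ by $-\id$ (cf.~\cite[Section 1.3]{Kac90}), since it is induced by the root system automorphism $\alpha\mapsto-\alpha$ and fixes the scaling by sending $e_i\mapsto -f_i$. Hence for $h\in\hfh$ and $\phi\in M^\star$ we have
$$
(h.\phi)(m)=\phi(-\omega(h).m)=\phi(h.m),
$$
and if $m\in M_\nu$ the right-hand side is $\langle\nu,h\rangle\phi(m)$. Consequently $\phi$ has weight $\mu$ precisely when it vanishes on each $M_\nu$ with $\nu\ne\mu$, so $(M^\star)_\mu=\Hom_\DC(M_\mu,\DC)$. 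In particular $\dim_\DC (M^\star)_\mu=\dim_\DC M_\mu$ for all $\mu$, and the characters coincide.

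Applying this to $M=L(\lambda)$ finishes the proof: since the duality is exact and preserves irreducibility, $L(\lambda)^\star$ is a simple object of $\CO^f$, hence isomorphic to $L(\mu)$ for some $\mu\in\hfhd$. Comparing characters via the previous step yields $\cha L(\mu)=\cha L(\lambda)$, and since the highest weight of a simple highest weight module is recovered as the maximum of the support of its character, $\mu=\lambda$. I do not expect any serious obstacle; the only point that requires care is the sign conventions in the formula $(h.\phi)(m)=\phi(-\omega(h).m)$, which is designed precisely so that $\omega|_{\hfh}=-\id$ cancels the extra minus sign and makes $\star$ weight-preserving rather than weight-reversing.
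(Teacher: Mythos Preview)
Your proposal is correct and is precisely the character argument the paper has in mind when it writes ``A quick look at characters shows the following'' just before stating the lemma; the paper gives no further details. Your verification that $\omega|_{\hfh}=-\id$ forces $(M^\star)_\mu=\Hom_\DC(M_\mu,\DC)$, together with the stated fact that $(\cdot)^\star$ preserves irreducibility, is exactly what is needed.
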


For each $\lambda\in\hfhd$ we denote by 
$$
\nabla(\lambda):=\Delta(\lambda)^\star
$$
the dual of the Verma
module with highest weight $\lambda$. By the above
lemma and the exactness of the duality, $\nabla(\lambda)$ and  $\Delta(\lambda)$ have the same
Jordan-Hölder multiplicities, and $\nabla(\lambda)$ has a simple socle which is isomorphic to
$L(\lambda)$.

\section{The critical hyperplane}\label{sec-crithyp}
In this section we recall the notion of a critical weight for the
affine Kac--Moody algebra $\hfg$. We introduce a shift functor $T$ on
each of the critical blocks  and study the corresponding
{\em graded center} $\CA=\bigoplus_{n\in\DZ}\CA_n$. For a critical weight $\lambda\in\hfhd$  we define  the {\em restricted Verma
  module $\rDelta(\lambda)$} as  the quotient of $\Delta(\lambda)$
by the ideal of $\CA$ generated by $\bigoplus_{n\ne 0}\CA_n$.  We state some fundamental
properties of these modules in Theorem \ref{theorem-ResVerma}. The
proof of this theorem is due to Feigin and Frenkel. Then we
recall  the {\em Feigin--Frenkel conjecture} on the Jordan-Hölder
multiplicities for restricted Verma modules and, finally, state the
main result of this article in Theorem \ref{theorem-MT}.

\subsection{A shift functor}
The defining relations of $\hfg$ show that the derived Lie algebra of
$\hfg$ coincides with the central extension $\tfg$ of the loop algebra, i.e.~
$$
[\hfg,\hfg]=\fg\otimes\DC[t,t^{-1}]\oplus\DC K.
$$
Hence $[\hfg,\hfg]$  is of
codimension one in $\hfg$, so the quotient $\hfg/[\hfg,\hfg]$ is a
one-dimensional Lie algebra. Each character of $\hfg/[\hfg,\hfg]$ gives rise to a
one dimensional module of $\hfg$. In this way we get the simple
modules $L(\zeta\delta)$ for  $\zeta\in\DC$. We have $L(\zeta\delta)\otimes L(\xi\delta)\cong L((\zeta+\xi)\delta)$ for $\zeta,\xi\in \DC$.

Let us define the {\em shift functor} 
\begin{align*}
T\colon \hfg\catmod&\to\hfg\catmod,\\
M&\mapsto M\otimes_\DC L(\delta).
\end{align*}
The action of $\hfg$ on the tensor product is the usual one:
$X.(m\otimes l)=X.m\otimes l+m\otimes X.l$ for $X\in\hfg$, $m\in M$
and $l\in L(\delta)$. The functor $T$ 
 is exact and  preserves the categories $\CO^f$ and $\CO$. Clearly, it is an equivalence on  these categories with inverse $T^{-1}\colon M\mapsto
M\otimes_\DC L(-\delta)$. For $n\in\DZ$ we denote by
$T^n\colon\CO\to\CO$ the $|n|$-fold composition of $T$
or of $T^{-1}$. It is given by the tensor product with the one-dimensional module $L(n\delta)$.

The following lemma is easy to prove (for part (3) use the facts that
$L(\delta)^\star\cong L(\delta)$ and $(M\otimes_\DC
N)^\star=M^\star\otimes_\DC N^\star$).

\begin{lemma}\label{lemma-propT} For each $\lambda\in\hfhd$ we have
\begin{enumerate}
\item $T\Delta(\lambda)\cong \Delta(\lambda+\delta)$,
\item $T L(\lambda)\cong L(\lambda+\delta)$,
\item on $\CO^f$ the functor $T$ commutes with the duality, i.e.~ there is a natural
  equivalence 
$T\circ (\cdot)^\star\cong (\cdot)^\star\circ T$ of functors.
\end{enumerate}
\end{lemma}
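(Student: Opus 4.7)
The plan is to handle the three parts in order, with (1) via a highest weight comparison, (2) via the fact that $T$ is an equivalence, and (3) by a direct manipulation of characters combined with the two hints given in parentheses.

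For (1), I would argue as follows. Let $v_\lambda \in \Delta(\lambda)$ be a highest weight vector and let $w \in L(\delta)$ be a non-zero vector; note that $L(\delta)$ is one-dimensional, so $w$ spans it, and since $L(\delta)$ is the pullback of a character of $\hfg/[\hfg,\hfg]$, the whole nilpotent radical $\hfn^+$ (in fact all of $[\hfg,\hfg]$ and in particular every root space) acts trivially on $w$. The vector $v_\lambda \otimes w \in T\Delta(\lambda)$ therefore has weight $\lambda + \delta$ and is annihilated by every positive root space. By the universal property of the Verma module this yields a homomorphism $\Delta(\lambda+\delta) \to T\Delta(\lambda)$, which is surjective since $v_\lambda \otimes w$ generates $T\Delta(\lambda)$ as a $\hfg$-module (because $v_\lambda$ generates $\Delta(\lambda)$ and $w$ spans $L(\delta)$). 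A comparison of characters, using the explicit Verma character formula, shows that both sides have the same character, so the map is an isomorphism.

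For (2), the cleanest route is to exploit that $T$ is already known to be an equivalence of $\CO$ with inverse $T^{-1}$. Hence $T$ sends simple objects to simple objects. The module $TL(\lambda)$ is a non-zero quotient of $T\Delta(\lambda) \cong \Delta(\lambda+\delta)$ by part (1), so it is a highest weight module of highest weight $\lambda + \delta$; being simple it must equal $L(\lambda+\delta)$.

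For (3), I would apply part (2) to $\lambda = 0$, giving $L(\delta) = TL(0) = L(0) \otimes L(\delta)$, but more usefully apply it to $\lambda = -\delta$: $L(\delta) = T L(-\delta)$, and then use part (2) combined with the general fact that duals of one-dimensional modules are given by the inverse character to conclude $L(\delta)^\star \cong L(\delta)$. Next, for any $M \in \CO^f$, the canonical map
\[
M^\star \otimes_\DC L(\delta)^\star \longrightarrow (M \otimes_\DC L(\delta))^\star,
\]
which is an isomorphism of vector spaces because $L(\delta)$ is one-dimensional (so the graded dual of the tensor product matches weight-by-weight with the tensor product of the graded duals), is $\hfg$-equivariant: one checks this on elementary tensors using that $\omega$ is an algebra antiautomorphism, the coproduct $X \mapsto X\otimes 1 + 1\otimes X$ is cocommutative up to $\omega$, and the actions on $M^\star$ and $L(\delta)^\star$ are defined using $-\omega$. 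Combining, $(TM)^\star = (M\otimes L(\delta))^\star \cong M^\star \otimes L(\delta)^\star \cong M^\star \otimes L(\delta) = T(M^\star)$ naturally in $M$.

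The only mildly technical step is the naturality and $\hfg$-linearity in part (3); the rest is essentially a universal-property plus character-comparison argument. I do not expect any genuine obstacle, since one-dimensionality of $L(\delta)$ reduces the question to elementary bookkeeping.
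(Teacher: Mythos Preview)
Your proposal is correct and follows essentially the same approach the paper sketches: the paper does not give a detailed proof but only remarks that the lemma is easy and records the two hints for part~(3), namely $L(\delta)^\star\cong L(\delta)$ and $(M\otimes_\DC N)^\star\cong M^\star\otimes_\DC N^\star$, both of which you use. One small simplification: your derivation of $L(\delta)^\star\cong L(\delta)$ via part~(2) and ``inverse characters'' is more roundabout than needed, since the paper has already recorded the general fact $L(\lambda)^\star\cong L(\lambda)$ for all $\lambda$ in the preceding section on duality, so you may simply cite that.
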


Let us denote by $\lambda\mapsto \ol\lambda$ the linear map $\hfhd\to\fhd$ that is dual to the inclusion $\fh\subset\hfh$. 
For a subset $\Lambda$ of $\hfhd$ we denote by $\ol\Lambda$ its image in $\fhd$. 
Now we come to the definition of {\em critical equivalence classes}, {\em critical weights} and {\em critical
  blocks}.

\begin{lemma}\label{lemma-aequcrit} For an equivalence class $\Lambda\in\hfhd/_{\textstyle{\sim}}$ the following are equivalent.
\begin{enumerate}
\item The functor $T$ maps $\CO_{\Lambda}$ into itself. 
\item We have $\lambda+\delta\sim\lambda$ for all $\lambda\in \Lambda$.
\item We have $\DZ\delta\setminus\{0\}=\hR^{\imag}\subset \hR(\Lambda)$.
\item For all $\lambda\in \Lambda$, the restriction of $\lambda$ to the central line
  $\DC K\subset\hfh$ coincides with the restriction of $-\rho$, i.e.~ we have $\langle\lambda,K\rangle=\langle-\rho,K\rangle$.
  
\item The induced dot-action of the affine Weyl group $\hCW$ on  $\ol\Lambda$ in
   factors over an action of the finite Weyl group.
\end{enumerate}
\end{lemma}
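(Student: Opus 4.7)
My strategy is to prove the conditions equivalent via a web of implications centered on condition (4). The equivalence (1)$\Leftrightarrow$(2) is immediate from Lemma \ref{lemma-propT}: since $TL(\lambda)\cong L(\lambda+\delta)$ and $\CO_\Lambda$ is characterized by having simple objects with highest weight in $\Lambda$, the exact equivalence $T$ preserves $\CO_\Lambda$ iff $\Lambda$ is stable under $\lambda\mapsto\lambda+\delta$, i.e.\ $\lambda+\delta\sim\lambda$ for every $\lambda\in\Lambda$. For (3)$\Leftrightarrow$(4) I would use the explicit form recalled in the previous section: $(\delta,\delta)=0$, and the identification $\hfh\cong\hfhd$ induced by the invariant form sends $K$ to $\delta$, so $(\mu,\delta)=\langle\mu,K\rangle$. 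Hence $n\delta\in\hR(\Lambda)$ for $n\ne 0$ reduces to $2(\lambda+\rho,n\delta)\in\DZ\cdot 0=\{0\}$, i.e.\ $\langle\lambda+\rho,K\rangle=0$, which is (4).

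Next, (4)$\Rightarrow$(2) is a one-line application of the Kac--Kazhdan theorem with $\beta=\delta$: under (4) we have $2(\lambda+\rho,\delta)=0=1\cdot(\delta,\delta)$, so $\lambda-\delta\preceq\lambda$ and hence $\lambda+\delta\sim\lambda$. For (4)$\Leftrightarrow$(5) I would use the semidirect decomposition of $\hCW$ into $\CW$ and its translation subgroup, whose elements $t_\beta$ are indexed by (an appropriate lattice in) $\fhd$. The standard linear formula for $t_\beta$ on $\hfhd$ gives, by direct computation, $\ol{t_\beta.\lambda}=\ol\lambda+\langle\lambda+\rho,K\rangle\beta$ on the $\fhd$-component of the dot-action. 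Hence the translation subgroup acts trivially on $\ol\Lambda$ iff $\langle\lambda+\rho,K\rangle=0$; since the $\CW$-part of the $\hCW$-dot-action on $\ol\Lambda$ just reproduces the finite dot-action, this yields (4)$\Leftrightarrow$(5).

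The delicate step is (2)$\Rightarrow$(4). I would analyze a Kac--Kazhdan chain $\lambda=\lambda_0,\dots,\lambda_N=\lambda+\delta$ whose consecutive entries are related by $\preceq$ via some root $\beta_i\in\hR^+$. Since every root of $\hfg$ pairs trivially with $K$, the level $\langle\mu,K\rangle$ is constant along the chain. If any $\beta_i$ is imaginary, then $(\beta_i,\beta_i)=0$ forces $(\lambda_i+\rho,\delta)=0$, and constancy of the level gives (4) directly. Otherwise every step is a dot-reflection by a real root, and the composition realizes $\lambda+\delta=w.\lambda$ for some $w\in\hCW$. Decomposing $w=w_0t_\beta$ in the semidirect product and matching the $\fhd$- and $\delta$-components of $w.\lambda=\lambda+\delta$ via the translation formula from the previous step produces, under the hypothesis $\langle\lambda+\rho,K\rangle\ne 0$, a system whose $\fhd$-equation uniquely determines $\beta$; substituting into the $\delta$-equation yields the contradiction $0=-1$ after simplification using $\CW$-invariance of the form. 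This forces (4) and completes the cycle. The subtle point is the handling of the $\delta$-component of the translation cocycle so that the clean algebraic identity emerges.
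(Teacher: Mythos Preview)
Your proposal is correct and follows the same outline as the paper for the easy implications (1)$\Leftrightarrow$(2), (3)$\Leftrightarrow$(4), and (3)/(4)$\Rightarrow$(2). The two substantive differences are in (4)$\Leftrightarrow$(5) and, especially, in the hard direction (2)$\Rightarrow$(4).

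For (4)$\Leftrightarrow$(5) the paper avoids the semidirect-product translation formula: it simply notes that $(\alpha+n\delta)^\vee=\alpha^\vee+nK'$ for a nonzero multiple $K'$ of $K$, so $s_{\alpha+n\delta}.\lambda\equiv s_{\alpha+m\delta}.\lambda\pmod\delta$ for all $m,n$ precisely when $\langle\lambda+\rho,K\rangle=0$. This gives (4)$\Leftrightarrow$(5) with no cocycle bookkeeping.

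For (2)$\Rightarrow$(4) your reduction to $\lambda+\delta=w.\lambda$ for some $w\in\hCW$ is exactly what the paper obtains (phrased as: if (3) fails then $\hR(\Lambda)\subset\hR^{\real}$ and $\Lambda$ is a $\hCW(\Lambda)$-orbit). But from that point the paper finishes in one line using $\hCW$-invariance of the form on $\hfhd$: $w(\lambda+\rho)=\lambda+\rho+\delta$ gives
\[
(\lambda+\rho,\lambda+\rho)=(\lambda+\rho+\delta,\lambda+\rho+\delta)=(\lambda+\rho,\lambda+\rho)+2(\lambda+\rho,\delta),
\]
hence $(\lambda+\rho,\delta)=\langle\lambda+\rho,K\rangle=0$. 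Your semidirect decomposition $w=w_0t_\beta$ and comparison of $\fhd$- and $\delta$-components does lead to the same contradiction (the $\fhd$-equation plus $\CW$-invariance on $\fhd$ forces $(\ol{\lambda+\rho},\beta)=-\tfrac{c}{2}|\beta|^2$, and the $\delta$-equation then reads $0=-1$), but this is really the coordinate expansion of the one-line invariance argument. What you call the ``subtle point'' about the $\delta$-component of the translation cocycle disappears entirely if you use invariance of the full form rather than projecting first.
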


\begin{remark} Note that $w.(\lambda+\xi\delta)=w.\lambda+\xi w(\delta)=w.\lambda+\xi\delta$, so $w.\lambda\equiv w.(\lambda+\xi\delta)\mod\delta$ for all $\lambda$, so the dot-action indeed induces an action of $\hCW$ on the quotient $\hfhd/\DC\delta$ and the statement (5) above makes sense.
\end{remark}

If the above conditions on $\Lambda$ are satisfied, we say that $\Lambda$ is a {\em critical equivalence class}. In this case we call each element $\lambda$ of $\Lambda$ a {\em critical weight}, or {\em of critical level}. We let $\hfhd_{\crit}$ be the set of weights of critical level, i.e.~the union of the critical equivalence classes. Condition (4) above shows that $\hfhd_{\crit}$ is an affine hyperplane in $\hfhd$. It is called the {\em critical hyperplane}. 

\begin{proof} From the definition of the blocks, the exactness of $T$ and Lemma \ref{lemma-propT} we deduce the equivalence of (1) and (2).

Note that $(\alpha+n\delta)^\vee=\alpha^\vee+n K^\prime$, where $K^\prime$ is a non-zero multiple of $K$. Hence, $\lambda\in\hfhd$ has the property that $s_{\alpha+n\delta}.\lambda\equiv s_{\alpha+m\delta}.\lambda\mod\delta$ for all $m,n\in\DZ$ if and only if $\langle\lambda+\rho,K\rangle=0$. Hence (4) and (5) are equivalent.

As $(\delta,\delta)=0$ we have $\DZ\delta\setminus\{0\}\subset\hR(\Lambda)$ if and only if $(\lambda+\rho,\delta)=0$ for all $\lambda\in\Lambda$. The latter equation is equivalent to $\langle\lambda+\rho,K\rangle=0$. Hence (3) and (4) are equivalent. 

Clearly, (3) implies (2). On the other hand, suppose that (2) holds, but (3) does not hold. Then  $\hR^{\imag}\cap\hR(\Lambda)=\emptyset$, and $\Lambda$ is an orbit of $\hCW(\Lambda)$ under the dot-action. So $\lambda,\lambda+\delta\in\Lambda$ implies that $\lambda+\delta$ and $\lambda$ are contained in the same $\hCW$-orbit. The invariance of the bilinear form then yields $(\lambda+\delta+\rho,\lambda+\delta+\rho)=(\lambda+\rho,\lambda+\rho)$ which implies, as $(\delta,\delta)=0$, that $(\lambda+\rho,\delta)=0$, hence $\langle\lambda+\rho,K\rangle=0$, hence (4), which is equivalent to (3). Hence we have a contradiction.  So (2) implies (3).
\end{proof}

\subsection{The action of $\hCW$ in the critical hyperplane}

Let us fix a critical equivalence class
$\Lambda\subset\hfhd_{\crit}$. Then we have $\delta\in \hR(\Lambda)$, so
$(\lambda+\rho,\delta)\in\DZ(\delta,\delta)=0$ for some (all)
$\lambda\in\Lambda$. Hence, if $\alpha\in \hR(\Lambda)$, then
either $\alpha=-\delta$ or $\alpha+\delta\in\hR(\Lambda)$. 
If we set $R(\Lambda):=R\cap \hR(\Lambda)$,
then 
$$
\hR(\Lambda)=\{\alpha+n\delta\mid \alpha\in R(\Lambda),
n\in\DZ\}\cup\{n\delta\mid n\in\DZ,n\ne 0\}.
$$
Moreover, we have $s_{\alpha+n\delta}\in \hCW(\Lambda)$ if and only if
$s_{\alpha}\in\hCW(\Lambda)$. We set $\CW(\Lambda)=\hCW(\Lambda)\cap
\CW$. Then $\hCW(\Lambda)$ is the affinization of $\CW(\Lambda)$.

Let $\alpha\in R(\Lambda)$. We now define a bijection
$\alpha\uparrow\cdot\colon\Lambda\to\Lambda$, following \cite{AJS94}. For
$\lambda\in\Lambda$ let $\alpha\uparrow\lambda$ be the minimal element
in $\{s_{\alpha,n}.\lambda\mid n\in\DZ,s_{\alpha,n}.\lambda\ge \lambda\}$. Note that $\alpha\uparrow\lambda=\lambda$ if $\lambda$ is contained in the reflection hyperplane corresponding to some $s_{\alpha,n}$. We
denote by $\alpha\downarrow\cdot\colon\Lambda\to\Lambda$ the
inverse map.

\begin{definition}  We say that
\begin{enumerate}
\item
  $\Lambda$ is {\em generic}, if $\ol{\Lambda}$ contains only one element,
  \item $\Lambda$ is {\em subgeneric}, if $\ol \Lambda$ contains exactly two elements,
\item $\Lambda$ is {\em regular}, if for some (all) $\nu$ in $\Lambda$ we have that
  $w.\nu=\nu$ implies $w=e$ for all $w\in\CW$. 
\end{enumerate}
\end{definition}
Note that part (3) of the above definition refers to the action of the
{\em finite} Weyl group only.  If $\Lambda$ is subgeneric, then there is a unique finite positive root $\alpha$ such that $\ol\Lambda=\{\ol\lambda,s_\alpha.\ol\lambda\}$. Set $R(\Lambda)^+:=R(\Lambda)\cap R^+$.
\begin{definition}  Let $\nu\in \Lambda$. We say that
\begin{enumerate}
\item  $\nu$ is {\em dominant}, if $\langle\nu+\rho,\alpha^\vee\rangle\ge 0$ for all $\alpha\in R(\Lambda)^+$.
\item $\nu$ is {\em anti-dominant}, if $\langle\nu+\rho,\alpha^\vee\rangle\le 0$ for all $\alpha\in  R(\Lambda)^+$.
\end{enumerate}
\end{definition}

\subsection{The graded center of  a critical block}\label{sec-gradedcenter} Since $\Lambda$ is
supposed to be critical, we can consider the functor $T$  as an auto-equivalence on the block $\CO_\Lambda$.

Let $n\in\DZ$ and let $z$ be a natural transformation from the functor $T^n$ on $\CO_\Lambda$ to the identity functor $\id$ on $\CO_\Lambda$. Note that $z$ associates with any $M\in\CO_\Lambda$ a homomorphism $z^M\colon T^nM\to M$ in such a way that for any homomorphism $f\colon M\to N$ in $\CO_\Lambda$ the diagram

\centerline{
\xymatrix{
T^nM\ar[r]^{T^n f} \ar[d]_{z^M}&T^nN\ar[d]^{z^N}\\
 M\ar[r]^{f}& N
}
}
\noindent
commutes. 

Denote by  $\CA_n=\CA_n(\Lambda)$ the complex vector space of all natural transformations $z$ as above such that $z^{T^lM}=T^l z^M\colon T^{n+l}M\to T^l M$ for all $M\in\CO_\Lambda$ and $l\in\DZ$. There is a bilinear map
\begin{align*}
\CA_n\times\CA_m&\to\CA_{m+n}\\
(z_1,z_2)&\mapsto (M\mapsto z_1^M\circ(T^n z_2^M))
\end{align*}
that makes $\CA=\CA(\Lambda):=\bigoplus_{n\in \DZ} \CA_n$ into a graded $\DC$-algebra. It is associative, commutative and unital.

\subsection{Restricted Verma modules}

Let $\lambda\in \Lambda$ and $n\in \DZ$. Each $z\in \CA_n$ defines a homomorphism
$$
z^{\Delta(\lambda)}\colon T^n\Delta(\lambda)\cong\Delta(\lambda+n\delta)\to \Delta(\lambda).
$$
Each such homomorphism is zero if $n>0$.  Define $\Delta(\lambda)^-\subset \Delta(\lambda)$ as the submodule generated by the images of all the homomorphisms $z^{\Delta(\lambda)}$ for $z\in \CA_n$ and $n<0$.

\begin{definition} The quotient 
$$
\rDelta(\lambda) := \Delta(\lambda)/\Delta(\lambda)^-
$$
is called the {\em restricted Verma module} of highest weight $\lambda$. 
\end{definition}

Consider the formal power series 
$$
\prod_{j\ge 1}(1-q^j)^{-\rank\,
  \fg}=\prod_{j\ge1}(1+q^j+q^{2j}+\dots)^{\rank\,\fg},
$$
and let us define for $n\in\DZ$ the number $p(n)\in\DN$ as the
coefficient of $q^n$ in the above series.

We denote by $\hfn_+:=\bigoplus_{\alpha\in \hR^+}\hfg_\alpha$ the
subalgebra of $\hfg$ corresponding to the positive affine roots. For a
$\hfg$-module $M$ we denote by $M^{\hfn_+}$ the set of $\hfn_+$-invariant
vectors. It is a $\hfh$-submodule of $M$, hence we can also define its
weight spaces $M^{\hfn_+}_\nu$ for $\nu\in\hfhd$.

The following theorem lists the most important properties of the restricted Verma modules. The proofs of the following statements (2), (3) and (4), as well as the main step in the proof of (1), are due to Feigin and Frenkel. We recall the main arguments in Section \ref{sec-FFcenter}.

\begin{theorem} \label{theorem-ResVerma} Let $\lambda\in \hfhd_{\crit}$.
\begin{enumerate}
\item The map $\CA_{n}\to \Hom(T^{n}\Delta(\lambda), \Delta(\lambda))$ is surjective for all $n\in \DZ$.
\item We have $\dim \Hom(T^n\Delta(\lambda), \Delta(\lambda))= p(-n)$ for all $n\in \DZ$.
\item We have 
$$
\cha \rDelta(\lambda)=e^{\lambda}\prod_{\alpha\in \hR^{+}\cap \hR^{\real}}(1+e^{-\alpha}+e^{-2\alpha}+\dots).
$$ 
\item We have $\rDelta(\lambda)^{\hfn_+}_{\lambda-n\delta}=\{0\}$ for $n\ne 0$. 
\end{enumerate}
\end{theorem}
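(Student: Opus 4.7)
The plan is to deduce all four parts from the Feigin--Frenkel description (cf.~\cite{FeiFre92}) of the center $\fz=\fz(\hfg)$ of the suitably completed enveloping algebra at the critical level. Two features of $\fz$ are essential. First, $\fz$ carries a natural grading compatible with the $D$-action, with graded character $\prod_{j\ge 1}(1-e^{-j\delta})^{-\rank\fg}$, so that its $D$-degree-$n$ component has dimension $p(-n)$. Second, for each $\lambda\in\hfhd_{\crit}$ the map $\fz\to\Delta(\lambda)^{\hfn_+}$ sending $z$ to $z\cdot v_\lambda$ is a graded injection whose image is precisely $\bigoplus_{n\in\DZ}\Delta(\lambda)^{\hfn_+}_{\lambda+n\delta}$, and $\Delta(\lambda)$ is free as a $\fz$-module. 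Both properties rest on the identification of $\fz$ with the algebra of functions on $\,{}^LG$-opers on the formal punctured disk.

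Parts~(1) and~(2) follow quickly. Evaluation at the highest weight vector identifies $\Hom(\Delta(\mu),\Delta(\lambda))$ with $\Delta(\lambda)^{\hfn_+}_\mu$; applied with $\mu=\lambda+n\delta$ and combined with the isomorphism from $\fz$, this shows that $\Hom(T^n\Delta(\lambda),\Delta(\lambda))$ has dimension equal to that of the $D$-degree-$n$ component of $\fz$, namely $p(-n)$, proving~(2). For~(1), each homogeneous $z\in\fz$ of degree $n$ acts on every object of $\CO_\Lambda$ by an $\hfg$-equivariant endomorphism shifting $\hfh$-weights by $n\delta$, and one verifies that this family of endomorphisms defines a natural transformation in $\CA_n$; the image of such an element in $\Hom(T^n\Delta(\lambda),\Delta(\lambda))$ is the homomorphism sending $v_{\lambda+n\delta}$ to $z\cdot v_\lambda$, and surjectivity follows from the surjectivity of $z\mapsto z\cdot v_\lambda$ onto the relevant pieces of $\Delta(\lambda)^{\hfn_+}$.

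For~(3), the submodule $\Delta(\lambda)^-$ is by definition the $U(\hfg)$-submodule generated by $\fz_{<0}\cdot v_\lambda$; centrality of $\fz$ in $U(\hfg)$ forces this to coincide with $\fz_{<0}\cdot\Delta(\lambda)$, and hence $\rDelta(\lambda)=\Delta(\lambda)\otimes_\fz\DC$. Freeness of $\Delta(\lambda)$ as a $\fz$-module then gives an isomorphism of graded vector spaces $\Delta(\lambda)\cong\fz\otimes_\DC\rDelta(\lambda)$, so on characters $\cha\Delta(\lambda)=\cha(\fz)\cdot\cha\rDelta(\lambda)$. Combining this with the standard formula
$$
\cha\Delta(\lambda)=e^\lambda\prod_{\alpha\in\hR^+\cap\hR^{\real}}(1-e^{-\alpha})^{-1}\prod_{j\ge 1}(1-e^{-j\delta})^{-\rank\fg}
$$
and cancelling the imaginary-root factor against $\cha(\fz)$ yields the character formula in~(3).

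Part~(4) is then immediate: any $\hfn_+$-singular vector in $\Delta(\lambda)$ of weight $\lambda-n\delta$ with $n\ne 0$ is of the form $z\cdot v_\lambda$ for a uniquely determined $z\in\fz$ of negative $D$-degree, hence lies in $\fz_{<0}\cdot\Delta(\lambda)=\Delta(\lambda)^-$, so its class in $\rDelta(\lambda)$ vanishes. The single deep input, and hence the main obstacle of the whole argument, is the Feigin--Frenkel theorem itself --- both the explicit identification of $\fz$ and the freeness of $\Delta(\lambda)$ over $\fz$ --- which requires the vertex-algebra techniques and the geometry of opers developed in \cite{FeiFre92} and lies well outside the scope of elementary Lie algebra theory.
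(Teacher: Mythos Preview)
Your treatment of parts~(1)--(3) is essentially identical to the paper's: both routes go through the Feigin--Frenkel center, the map $\CZ_n\to\CA_n$, and Theorem~\ref{th:FF-freeness} (freeness of $\Delta(\lambda)$ over $\CZ^-$ together with the bijection $\CZ^-_n\stackrel{\sim}{\to}\Hom(\Delta(\lambda+n\delta),\Delta(\lambda))$). No disagreement there.

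Part~(4), however, has a genuine gap. You argue that every $\hfn_+$-singular vector \emph{in $\Delta(\lambda)$} of weight $\lambda-n\delta$ lies in $\Delta(\lambda)^-$ and hence dies in the quotient. But the claim is about singular vectors \emph{in $\rDelta(\lambda)$}, and a class $\bar v\in\rDelta(\lambda)^{\hfn_+}_{\lambda-n\delta}$ need not lift to a singular vector in $\Delta(\lambda)$: a lift $v$ only satisfies $\hfn_+\cdot v\subset\Delta(\lambda)^-$, not $\hfn_+\cdot v=0$. Equivalently, in the long exact sequence for $\hfn_+$-invariants associated with $0\to\Delta(\lambda)^-\to\Delta(\lambda)\to\rDelta(\lambda)\to 0$, you have shown that the composite $\Delta(\lambda)^{\hfn_+}_{\lambda-n\delta}\to\rDelta(\lambda)^{\hfn_+}_{\lambda-n\delta}$ is zero, but you have given no reason why this map should be \emph{surjective}. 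Freeness of $\Delta(\lambda)$ over $\CZ^-$ and the fact that $\CZ^-$ commutes with $\hfn_+$ do not by themselves force this: freeness is only as a $\CZ^-$-module, not as a module with a compatible $\hfn_+$-structure, so one cannot simply split off $\rDelta(\lambda)$ as an $\hfn_+$-stable direct summand.

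The paper does not pretend to give a self-contained argument for~(4) either; it records it as a separate theorem of Feigin--Frenkel \cite{FeiFre90} and points to \cite[Proposition~9.5.1]{Fre07} together with Theorem~\ref{th:FF-freeness}. The underlying proof uses the Wakimoto (free-field) realization of $\Delta(\lambda)$, where the relevant vanishing can be checked explicitly. So your instinct that~(4) should follow from the Feigin--Frenkel package is correct, but the specific deduction you wrote down is not valid, and the actual argument requires an additional ingredient beyond what you invoked.
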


\subsection{A conjecture}\label{subs-FFconj} The character of $L(\lambda)$ for a critical
highest weight $\lambda$ is not yet known in general. But we have a
formula for the characters
of the restricted Verma modules and the simple characters can
be calculated once the Jordan--Hölder multiplicites
$[\Delta(\lambda):L(\mu)]$ for critical weights $\lambda$ and $\mu$
are determined. In the following we state a conjecture
that gives a formula for these multiplicities in terms of periodic
Kazhdan--Lusztig polynomials. It is due to Feigin and Frenkel.

Let $\Lambda$ be a critical equivalence class. Then $\hCW(\Lambda)$ is the affinization of the finite Weyl group $\CW(\Lambda)$. So we can think of $\hCW(\Lambda)$ as a group of affine transformations on a vector space. The connected complements of the affine reflection hyperplanes are called alcoves, and the set of alcoves is a principal homogeneous set for the action of $\hCW(\Lambda)$. Let $A_e$ be the unique alcove in the dominant Weyl chamber that contains the origin in its closure. Then the map $w\mapsto A_w:=w(A_e)$ gives a bijection between $\hCW(\Lambda)$ and the set of alcoves. In  \cite{Lus80}
Lusztig defined for  alcoves $A$ and $B$ a {\em periodic
  polynomial} $p_{A,B}\in\DZ[v]$ (we use the normalization and notation of Soergel, cf.~ \cite{Soe97} and \cite{Fie07}). Denote by $w_\Lambda\in\CW(\Lambda)$ the longest element.

\begin{conjecture} \label{conj-FFC} Let $\Lambda$ be a critical equivalence class.
\begin{enumerate}
\item {\em The restricted linkage principle:} For $\lambda,\mu\in
  \Lambda$ we have that $[\rDelta(\lambda):L(\mu)]\ne 0$ implies  $\mu\in\hCW(\Lambda).\lambda$.
\item {\em The restricted Verma multiplicities:} Let $\lambda\in \Lambda$ be regular and dominant and $w\in \hCW(\Lambda)$. Suppose that for all $x,x^\prime\in \hCW(\Lambda)$ with $p_{A_{w_\Lambda x},A_{w_\Lambda w}}(1)\ne 0$ and $p_{A_{w_\Lambda x^\prime},A_{w_\Lambda w}}(1)\ne 0$ and $x\ne x^\prime$ we have $x.\lambda\ne x^\prime.\lambda$. Then
$$
[\rDelta(w.\lambda):L(x.\lambda)]=p_{A_{w_\Lambda w},A_{w_\Lambda x}}(1)
$$
for all $x\in \hCW(\Lambda)$. 
\end{enumerate}
\end{conjecture}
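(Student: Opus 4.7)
The plan is to follow an Andersen--Jantzen--Soergel-style reduction in the critical setting, using the Feigin--Frenkel input recorded in Theorem~\ref{theorem-ResVerma} as the essential technical lever. The whole argument is driven by the fact that $\cha\rDelta(\lambda)$ is completely explicit (Theorem~\ref{theorem-ResVerma}(3)), so the conjecture becomes a question of pinning down the simple constituents of a module whose composition length (as a function of the block) is controlled.

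I would first establish part~(1), the restricted linkage principle. Theorem~\ref{theorem-ResVerma}(3) shows that any composition factor $L(\mu)$ of $\rDelta(\lambda)$ satisfies $\lambda-\mu\in\DZ_{\ge 0}\hR^{\real,+}$; combining this with the Kac--Kazhdan description of $\preceq$, and the fact that $\hCW(\Lambda)$ is generated by reflections through real roots, already forces $\mu$ into $\hCW(\Lambda).\lambda$ up to a shift by an imaginary root. Theorem~\ref{theorem-ResVerma}(4) then excludes such imaginary shifts, because any $L(\mu)$ with $\mu=\nu-n\delta$, $n\ne 0$, would contribute an $\hfn_+$-invariant vector of weight $\lambda-m\delta$ with $m\ne 0$ to $\rDelta(\lambda)$, contradicting the vanishing statement.

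For part~(2), the strategy is to argue by deformation and intersection in the spirit of \cite{Fie06}: deform $\CO_\Lambda$ over a suitable local ring and use that the generic fibre decomposes into subgeneric pieces indexed by the real roots in $\hR(\Lambda)$. In each such subgeneric piece, with $\ol\Lambda=\{\ol\lambda,s_\alpha.\ol\lambda\}$, the character formula of Theorem~\ref{theorem-ResVerma}(3) together with the linkage principle forces $\rDelta(\gamma)$ to have composition length at most two, with candidates $L(\gamma)$ and $L(\alpha\uparrow\gamma)$. The embedding $\rDelta(\alpha\uparrow\gamma)\hookrightarrow\rDelta(\gamma)$ should come from the existence of a Shapovalov-type homomorphism $\Delta(\alpha\uparrow\gamma)\to\Delta(\gamma)$ that survives the passage to the quotient by $\Delta(\gamma)^-$; the surjectivity statement in Theorem~\ref{theorem-ResVerma}(1) is exactly what makes this passage controllable, since it shows that all homomorphisms between Verma modules in the block come from $\CA$ acting globally on $\CO_\Lambda$. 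The anti-dominant case of the main theorem follows by specialization: for anti-dominant $\nu$ one first shows $\rDelta(\nu)=L(\nu)$ (since no nontrivial $\hCW(\Lambda)$-image of $\nu$ is $\le\nu$), and then comparing characters of $\rDelta(w.\nu)$ for $w\in\CW(\Lambda)$ against the known character of $L(\nu)$ forces the multiplicity $[\rDelta(\gamma):L(\nu)]=1$.

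The main obstacle I expect is the reconstruction step: passing from the subgeneric computations to a multiplicity formula for a general critical block requires a combinatorial model (an affine moment-graph or a category of Soergel-type bimodules over $\CA$) on which both the shift functor $T$ and the graded action of $\CA$ are compatible. Because $\CA$ is infinite-dimensional and graded by $T$, the classical AJS combinatorics must be extended to an ``affinized'' or semi-infinite setting, and verifying that such a model correctly computes restricted Verma multiplicities is a substantial independent problem. This is presumably why the present paper confines itself to the anti-dominant and subgeneric cases, where the combinatorics collapses to a tractable binary decision and periodic polynomials $p_{A,B}(1)$ degenerate to $0$ or $1$ in exactly the pattern predicted by Conjecture~\ref{conj-FFC}(2).
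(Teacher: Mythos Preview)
The statement you are attempting to prove is labeled \emph{Conjecture} in the paper, and indeed the paper does not prove it. Part~(1), the restricted linkage principle, is established only in the companion paper \cite{AF2}, and part~(2) in full generality remains open; the present paper verifies it only in the anti-dominant and subgeneric cases via Theorem~\ref{theorem-MT}. You acknowledge as much in your final paragraph, so to some extent your proposal is a strategy sketch rather than a proof. That said, let me flag where your sketch of part~(1) breaks down, since it is more optimistic than the actual situation warrants.

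Your argument for part~(1) rests on two claims, both of which fail. First, the assertion that the character formula of Theorem~\ref{theorem-ResVerma}(3) combined with Kac--Kazhdan forces $\mu\in\hCW(\Lambda).\lambda$ ``up to a shift by an imaginary root'' is not justified: the lattice $\DZ\hR^{\real,+}$ contains all of $\DZ\hR$ (since $\alpha+(-\alpha+\delta)=\delta$), so the support of $\cha\rDelta(\lambda)$ gives no constraint beyond $\mu\le\lambda$, and Kac--Kazhdan only tells you $\mu\in\Lambda$, which you already know. Second, and more seriously, your use of Theorem~\ref{theorem-ResVerma}(4) is invalid: a composition factor $L(\mu)$ of $\rDelta(\lambda)$ does \emph{not} in general produce an $\hfn_+$-invariant vector of weight $\mu$ in $\rDelta(\lambda)$. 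That would require $L(\mu)$ to sit as the socle of a quotient by a submodule, i.e.\ to correspond to a primitive vector, and showing this is precisely the hard part---indeed, much of the proof of Theorem~\ref{theorem-dommul} in this paper is devoted to establishing exactly this kind of ``every subquotient comes from a primitive vector'' statement in the subgeneric case, and it requires the full force of the projective-object analysis in Section~\ref{sec-Proj} together with the BRST input from Section~\ref{sec-BRST}. So the vanishing $\rDelta(\lambda)^{\hfn_+}_{\lambda-n\delta}=0$ for $n\ne 0$ does not by itself exclude $L(\lambda-n\delta)$ as a composition factor.

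Your discussion of part~(2) is broadly in the right spirit (deformation to subgeneric situations, moment-graph combinatorics), and you correctly identify the reconstruction step as the missing ingredient. But note that even your proposed treatment of the subgeneric case is incomplete: you write that the character formula together with linkage forces composition length at most two, but linkage is part~(1), which you have not established, and in any case the actual proof in Section~\ref{sec-proofofMT} proceeds quite differently---it uses the exact BRST functor of Theorem~\ref{Th:main-of-Ara} to handle the anti-dominant multiplicities directly, and then an intricate induction (Theorem~\ref{theorem-dommul}) involving the Casimir operator, the action of $\CA$ on projectives, and Lemma~\ref{lemma-prim} to handle the dominant ones.
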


This conjecture is closely related to an anticipated  relation between
representations of a small quantum group, the topology of
semi-infinite flag manifolds and the restricted critical level
representations of an affine Kac--Moody algebra, cf.~\cite{Lus91}. We prove part (1) of the above conjecture in  \cite{AF2}.

\subsection{The main result}  

In the following theorem we summarize the main results of this article.

\begin{theorem}\label{theorem-MT} Suppose that $\Lambda\subset\hfhd$
  is a critical equivalence class.  
\begin{enumerate}
\item If $\nu\in\Lambda$ is 
anti-dominant, then for all
  $w\in\CW(\Lambda)$ and $n\ge 0$ we have
$$
[\Delta(w.\nu):L(\nu-n\delta)]=p(n).
$$
\item If $\Lambda$ is subgeneric and $\nu\in\Lambda$ is dominant, then
  we have for all $n\ge 0$
$$
[\Delta(\nu):L(\nu-n\delta)] = [\Delta(\alpha\uparrow \nu):L(\nu-n\delta)] =  p(n),
$$
where $\alpha$ is the unique positive finite root with $\ol\Lambda=\{\ol\lambda,s_\alpha.\ol\lambda\}$.
\end{enumerate}
\end{theorem}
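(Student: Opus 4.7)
The plan is to combine the character identity between Verma and restricted Verma modules with a length induction in $\CW(\Lambda)$, using the subgeneric case as the key computational input. Separating the imaginary-root contribution from the Weyl--Kac character formula gives
\[
\cha\Delta(\lambda)\big/\cha\rDelta(\lambda)=\prod_{n\ge 1}(1-e^{-n\delta})^{-\rank\fg}=\sum_{n\ge 0}p(n)e^{-n\delta},
\]
so $\cha\Delta(\lambda)=\sum_{n\ge 0}p(n)\,\cha\rDelta(\lambda-n\delta)$. Since the shift $T$ is an exact autoequivalence of $\CO_\Lambda$ with $T\rDelta(\mu)\cong\rDelta(\mu+\delta)$ and $TL(\mu)\cong L(\mu+\delta)$ (Lemma \ref{lemma-propT}), this passes to the Grothendieck group as
\[
[\Delta(\lambda):L(\mu)]=\sum_{n\ge 0}p(n)\,[\rDelta(\lambda):L(\mu+n\delta)].
\]
Setting $\mu=\nu-m\delta$, both parts of Theorem \ref{theorem-MT} reduce to the single statement $(\ast)$ that $[\rDelta(\gamma):L(\nu+k\delta)]$ equals $1$ for $k=0$ and $0$ otherwise, with $\gamma=w.\nu$ ($w\in\CW(\Lambda)$) in part (1) and $\gamma\in\{\nu,\alpha\uparrow\nu\}$ in part (2).

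\textbf{Subgeneric case.} Here $\hCW(\Lambda)$ is the infinite dihedral group generated by $s_\alpha$ and the affine reflections $s_{\alpha+n\delta}$, and the orbit $\hCW(\Lambda).\nu$ is a bi-infinite chain with $\alpha\uparrow\nu$ the immediate successor of $\nu$. By the Kac--Kazhdan theorem and Theorem \ref{theorem-ResVerma}(4), every composition factor of $\rDelta(\gamma)$ has highest weight in $\hCW(\Lambda).\gamma$ and strictly below $\gamma$ (other than the head $L(\gamma)$), and Theorem \ref{theorem-ResVerma}(3) cuts this down to a specific chain. To exclude the remaining intermediate factors---in particular the potential factor $L(\nu+j\delta)$ in $\rDelta(\alpha\uparrow\nu)$ for $0<j\le\langle\nu+\rho,\alpha^\vee\rangle$, which the character argument alone does not rule out---we use the Andersen--Jantzen--Soergel deformation machinery: restricted Vermas are lifted to a family parametrised by a deformation transverse to the $\alpha$-wall, and off the wall the deformed block becomes generic so that $\rDelta(\mu)=L(\mu)$. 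Specialising the multiplicities back to the wall forces $(\ast)$ for $\gamma\in\{\nu,\alpha\uparrow\nu\}$.

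\textbf{Anti-dominant case.} We argue by induction on $\ell(w)$ in $\CW(\Lambda)$. For $w=e$, the equality $[\rDelta(\nu):L(\nu)]=1$ is immediate since the $\nu$-weight space of $\rDelta(\nu)$ is one-dimensional and is the head; the vanishing of $[\rDelta(\nu):L(\nu+k\delta)]$ for $k>0$ is a weight bound from Theorem \ref{theorem-ResVerma}(3), and for $k<0$ it reduces, after applying $T^{-k}$, to an instance of the vanishing covered by the subgeneric step after translation to a subgeneric block. For the inductive step, write $w=sw'$ with $s=s_\alpha$ simple in $\CW(\Lambda)$ and $\ell(w')=\ell(w)-1$, and apply a wall-crossing translation functor $\theta_s$ through the $s$-wall. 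Its action on restricted Vermas is controlled by the subgeneric calculation: $\theta_s\rDelta(w'.\nu)$ admits a two-step filtration with subquotients $\rDelta(w.\nu)$ and $\rDelta(w'.\nu)$. A multiplicity comparison via the self-adjunction of $\theta_s$ then forces $[\rDelta(w.\nu):L(\nu)]=1$, completing the induction.

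\textbf{Main obstacle.} The technical crux is the subgeneric deformation step: one must carefully track the action of the graded centre $\CA$---via the surjection $\CA_n\twoheadrightarrow\Hom(T^n\Delta(\lambda),\Delta(\lambda))$ from Theorem \ref{theorem-ResVerma}(1) and Feigin--Frenkel's description of $\CA$---throughout a one-parameter family, and show that the Jordan--H\"older data specialise cleanly from the generic case. Once this is in place, the inductive anti-dominant argument is largely formal.
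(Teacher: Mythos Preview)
Your reduction via the character identity to the statement $(\ast)$ about restricted Verma multiplicities is correct and is indeed how the paper phrases its Corollary. However, the strategy you propose for establishing $(\ast)$ has genuine gaps, and in fact reverses the logical dependencies of the paper's actual argument.

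\textbf{The anti-dominant case.} The paper does \emph{not} prove part (1) by induction on $\ell(w)$ via translation functors. Instead it uses the BRST cohomology functor $F=H^0(?)$ associated with the quantized Drinfeld--Sokolov reduction. By Arakawa's theorem (Theorem~\ref{Th:main-of-Ara}), $F$ is exact on $\CO_{\crit}$, kills $L(\gamma)$ unless $\gamma$ is anti-dominant, and sends $\Delta(w.\nu)$ and $L(\nu)$ to explicitly known one-variable characters. Comparing $\cha_q F(\Delta(w.\nu))$ with $\sum_\gamma [\Delta(w.\nu):L(\gamma)]\cha_q F(L(\gamma))$ immediately yields $[\Delta(w.\nu):L(\nu-n\delta)]=p(n)$. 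Your proposed wall-crossing functors $\theta_s$ are not developed in the paper, and at the critical level it is not at all clear that such functors exist with the filtration property you assert for $\theta_s\rDelta(w'.\nu)$; this is not a formality. Moreover, your base case $w=e$ for $k<0$ appeals to ``translation to a subgeneric block,'' but $\Lambda$ is fixed and need not be subgeneric, so this step is undefined.

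\textbf{The subgeneric case.} Here your proposal is essentially a placeholder: you say the key step is a deformation transverse to the $\alpha$-wall in the spirit of AJS, specialising from a generic block. But specialisation only gives upper-semicontinuity of multiplicities, not equality, and deforming off the critical hyperplane destroys the graded centre $\CA$ entirely; deforming within it does not obviously make the block generic. The paper's actual argument is quite different: it proves by induction on $n$ that every occurrence of $L(\lambda-n\delta)$ in $\Delta(\lambda)$ comes from a genuine singular vector, using (i) the Casimir operator to control extensions (Lemma~\ref{lemma-Casimir}), (ii) the action of $\CA_n$ on projective covers (Propositions~\ref{prop-actonproj} and~\ref{prop-actonprojsub}) to force maps from $P^{<\lambda}(\lambda-n\delta)$ to factor through $\Delta(\lambda-n\delta)$, and (iii) crucially, part (1) of the main theorem itself to kill certain anti-dominant subquotients. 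So the dependency runs from part (1) to part (2), the reverse of what you propose.
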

Let us restate the above results in terms of restricted Verma modules. 
\begin{corollary} Let $\Lambda\subset\hfhd$ be a critical equivalence class.
\begin{enumerate}
\item Suppose that  $\nu\in \Lambda$ is anti-dominant. Then for any
  $\gamma\in\Lambda$ we have
$$
[\rDelta(\gamma):L(\nu)]=
\begin{cases}
1 \text{ if $\gamma\in\CW(\Lambda).\nu$}, \\
0 \text{ else.}
\end{cases}
$$
In particular, Conjecture \ref{conj-FFC} holds for the anti-dominant multiplicities.
\item 
If $\Lambda$ is subgeneric, then we have for all $\gamma,\nu\in\Lambda$
$$
[\rDelta(\gamma):L(\nu)]=
\begin{cases}
1 &\text{ if $\nu\in\{\gamma,\alpha\downarrow\gamma\}$},\\
0 &\text{ else,}
\end{cases}
$$
where $\alpha\in R^+$ is such that $\ol\Lambda=\{\ol\lambda,s_{\alpha}.\ol\lambda\}$. In particular, Conjecture \ref{conj-FFC} holds in the subgeneric cases.
\end{enumerate}
\end{corollary}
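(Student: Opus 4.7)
The plan is to deduce the corollary from Theorem~\ref{theorem-MT} by translating multiplicities from ordinary to restricted Verma modules via the character formula of Theorem~\ref{theorem-ResVerma}(3). Dividing the standard character of $\Delta(\lambda)$ by that of $\rDelta(\lambda)$ yields
$$\cha \Delta(\lambda) = \Bigl(\sum_{n \geq 0} p(n)\, e^{-n\delta}\Bigr)\cha \rDelta(\lambda),$$
and since $T$ commutes with the formation of $\rDelta$ and $T^{-n}\rDelta(\lambda)\cong\rDelta(\lambda - n\delta)$, comparing coefficients of $e^\mu$ --- using the linear independence of simple characters --- converts this into the multiplicity relation
$$[\Delta(\gamma):L(\mu)] = \sum_{n \geq 0} p(n)\,[\rDelta(\gamma - n\delta):L(\mu)] = \sum_{n \geq 0} p(n)\,[\rDelta(\gamma):L(\mu + n\delta)]$$
for all critical $\gamma,\mu$.

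I would then use Theorem~\ref{theorem-ResVerma}(3) to control the weights of $\rDelta(\gamma)$. Because these weights have the form $\gamma - \sum c_i\beta_i$ with $c_i\in\DZ_{\geq 0}$ and each $\beta_i$ a positive \emph{real} affine root, a short calculation on the $\delta$-coefficient shows that $[\rDelta(w.\nu):L(\nu + m\delta)] = 0$ for all $m > 0$ whenever $w\in\CW(\Lambda)$ and $\nu$ is anti-dominant, and that $L(\nu - j\delta)$ can appear in $\rDelta(\alpha\uparrow\nu) = \rDelta(s_\alpha.\nu + m_\alpha\delta)$ only for $j \geq 0$, where $m_\alpha := \langle\nu+\rho,\alpha^\vee\rangle$. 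Combining the Kac--Kazhdan theorem with the fact that critical equivalence classes are closed under $\DZ\delta$-shifts then gives $\Lambda = \CW(\Lambda).\nu + \DZ\delta$, so any $\gamma\in\Lambda$ can be written as $\gamma = w.\nu + j\delta$ with $w\in\CW(\Lambda)$ and $j\in\DZ$, and the shift functor identifies $[\rDelta(\gamma):L(\nu)]$ with $[\rDelta(w.\nu):L(\nu - j\delta)]$.

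For part (1), put $c_j := [\rDelta(w.\nu):L(\nu - j\delta)]$, which vanishes for $j < 0$ by the above. Substituting Theorem~\ref{theorem-MT}(1) into the multiplicity relation turns it into the convolution
$$p(k) = \sum_{n=0}^{k} p(n)\, c_{k-n} \quad\text{for all } k\geq 0,$$
which in generating functions is $P(q) = P(q)\,C(q)$ with $P(q) = \sum p(n)q^n$ and $C(q) = \sum c_j q^j$; hence $C(q) = 1$, so $c_0 = 1$ and $c_j = 0$ for $j > 0$. Translating back, $[\rDelta(\gamma):L(\nu)] = 1$ iff $j = 0$ iff $\gamma \in \CW(\Lambda).\nu$, which is (1). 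For part (2) the anti-dominant case is already contained in (1); for $\nu$ dominant, the same procedure applied to $\gamma = \nu$ and to $\gamma = \alpha\uparrow\nu$ (now using Theorem~\ref{theorem-MT}(2)) gives $[\rDelta(\gamma):L(\nu)] = 1$ exactly for $\gamma \in \{\nu, \alpha\uparrow\nu\}$ and, via the $T$-shift, $0$ for every other $\gamma\in\Lambda$; re-expressing the condition $\gamma\in\{\nu,\alpha\uparrow\nu\}$ as $\nu\in\{\gamma,\alpha\downarrow\gamma\}$ is then the asserted formula. The substantive part of the argument is not the generating-function inversion, which is purely formal, but rather the weight-support analysis together with the identification $\Lambda = \CW(\Lambda).\nu + \DZ\delta$: these jointly convert the \emph{a priori} infinite multiplicity relation into a finite convolution that can be solved directly from Theorem~\ref{theorem-MT}.
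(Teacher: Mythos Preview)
Your overall strategy—deriving the convolution identity
\[
[\Delta(\gamma):L(\mu)]=\sum_{n\ge 0}p(n)\,[\rDelta(\gamma):L(\mu+n\delta)]
\]
from Theorem~\ref{theorem-ResVerma}(3) and then inverting it against the values supplied by Theorem~\ref{theorem-MT}—is exactly the passage the paper has in mind when it calls the corollary a ``restatement'' (no proof is given there), and your treatment of part~(1) and of the case $\gamma=\nu$ in part~(2) is correct.

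There is, however, a real gap in part~(2) for $\gamma=\alpha\uparrow\nu$ with $\nu$ dominant. You assert that weight-support of $\rDelta(\alpha\uparrow\nu)$ forces $d_j:=[\rDelta(\alpha\uparrow\nu):L(\nu-j\delta)]=0$ for $j<0$. But $\alpha\uparrow\nu=s_\alpha.\nu+m_\alpha\delta$ has $D$-value $\langle\nu,D\rangle+m_\alpha$, so the $\delta$-coefficient bound yields only $d_j=0$ for $j<-m_\alpha$. And $\nu+r\delta$ \emph{can} be a weight of $\rDelta(\alpha\uparrow\nu)$ for $0<r<m_\alpha$: in type $B_2$ with $\alpha=\alpha_2$ the short simple root and $m_\alpha=2$, one has
\[
\alpha\uparrow\nu-(\nu+\delta)=-2\alpha_2+\delta=\alpha_0+\alpha_1,
\]
a sum of two simple (hence positive real) roots. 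So the character of $\rDelta$ alone does not give what you claim.

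The fix is to use Kac--Kazhdan rather than weight support. In a subgeneric block every real down-move in the order $\preceq$ is a reflection $s_{\pm\alpha+m\delta}$, and one checks directly that any such move carrying an anti-dominant element to a dominant one lowers the $D$-value by at least $m_\alpha$. A chain from $\alpha\uparrow\nu$ (anti-dominant) to $\nu+r\delta$ (dominant) must contain at least one such move, while the total $D$-drop is $m_\alpha-r$; hence $r\le 0$. Thus $[\Delta(\alpha\uparrow\nu):L(\nu+r\delta)]=0$ for every $r>0$, and since $\rDelta$ is a quotient of $\Delta$ you get $d_{-r}=0$. (Equivalently: feed $j=-r<0$ into your convolution identity; the left side vanishes by Kac--Kazhdan and positivity of the right side kills every $d_{j-n}$.) With this correction your generating-function inversion goes through unchanged.
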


In the following section  we recall the results  of
Feigin and Frenkel on the center at the critical level and deduce
Theorem \ref{theorem-ResVerma}.  In Section \ref{sec-Proj} we study
the structure of projective objects in a critical block
$\CO_\Lambda$. In particular, we provide some results on the action
of the graded center $\CA$ on a projective object. In  Section
\ref{sec-BRST} we introduce the BRST-cohomology functor and recall the
main Theorem of \cite{Ara07}. In Section \ref{sec-proofofMT} we use
the results of Sections \ref{sec-FFcenter}, \ref{sec-Proj} and
\ref{sec-BRST} to  give a proof of Theorem \ref{theorem-MT}.

\section{The Feigin--Frenkel center}\label{sec-FFcenter}
In this section we recall the fundamental
results on the Feigin-Frenkel center \cite{FeiFre92}
at the critical level.
The main references are the textbooks
\cite{FreBen04} and \cite{Fre07}.

\subsection{The universal affine vertex algebra
at the critical level}
Set 
\begin{align*}
V^{\crit}(\fing) := U(\affg)\*_{U(\fing\*_{\C} \C[t]\+ \C K \+ \C D)}\C_{\crit},
\end{align*}
where $\C_{\crit}$ is 
the one-dimensional
representation 
of $\fing\*_{\C}\C[t]\+ \C K\+ \C D$
on which 
$\fing\*_{\C}\C[t]\+ \C D$
acts trivially and 
$K$ acts as  multiplication by the critical value $\langle-\rho,K\rangle$.
The space 
 $V^{\crit}(\fing)$
has a
natural structure of a {\em vertex algebra},
and is called the
 {\em universal affine vertex algebra associated with
$\fing$} at the critical level (see e.g., \cite[\S 4.9]{Kac98}).
Let
\begin{align*}
Y(?,z):V^{\crit}(\fing) & \ra \End_{\C} V^{\crit}(\fing)[[z,z\inv]], \\
 a& \mapsto a(z)  =\sum_{n\in \Z}a_{(n)}z^{-n-1}
\end{align*}
be the state-field correspondence.
The  map
$Y(?,z)$
 is
 uniquely determined by the condition
\begin{align*}
 Y((x\* t\inv)\vac,z)=\sum_{n\in \Z}(x\* t^n)z^{-n-1}
\quad \text{for $x\in \fing$,}
\end{align*}
where $\vac$ is the vacuum vector $1\* 1$. 

The vertex algebra $\Vcr$
is graded by the Hamiltonian $-D$.
If $a\in \Vcr$ is an eigenvector of $-D$,
its eigenvalue is called the {\em conformal weight}
and is denoted by $\Delta_a$.
We denote by  $\partial$
 the translation operator. It satisfies
\begin{align*}
 Y(\partial a,z)=[\partial,Y(a,z)]=\frac{d}{dz}Y(a,z).
\end{align*} 
\subsection{The Feigin-Frenkel center}
\newcommand{\FFC}{\mathfrak{z}(\affg)}
Let $\FFC$ be the center of the
vertex algebra $V^{\crit}(\fing)$:
\begin{align*}
\FFC=\{a\in V^{\crit}(\fing)\mid 
[a_{(m)}, v_{(n)}]=0
\text{ for all $v\in V^{\crit}(\fing)$, $m,n\in \Z$}
\}.
\end{align*}
One has
\begin{align*}
 \FFC&=\{a\in \Vcri \mid  v_{(n)}a=0
\text{ for all $v\in \Vcri$, $n\geq 0$}
\},\\
&=V^{\crit}(\fing)^{G [[t]]},
\end{align*}
where $G$ is the adjoint group of $\fing$
and
 $G[[t]]$ is 
the $\C[[t]]$-points  of $G$.

Let $\{U_p(\fing\*_{\C}\C[t\inv]t\inv)\}$ be the standard filtration of
$U(\fing\*_{\C}\C[t\inv]t\inv)$ and set
\begin{align*}
 F_p V^{\crit}(\fing):=U_p(\fing\*_\C{\C}[t\inv]t\inv)\cdot \mathbf{1}
\subset \Vcri.
\end{align*}
This defines a filtration of a vertex algebra.
Let 
$\gr\, \Vcri$ be 
the associated
graded  vertex algebra:
$
\gr\, \Vcri=\bigoplus_{p}F_{p}V^{\crit}(\fing)
/F_{p-1}V^{\crit}(\fing)
$.
It is a commutative vertex algebra
and
one
has
\begin{align*}
\gr\, \Vcri\cong S(\fing[t\inv]t\inv)\cong\C[\fing_{\infty}]
\end{align*}
as differential rings\footnote{A commutative vertex algebra
$V$ is naturally a commutative ring 
with a derivation 
by the multiplication
$(a,b)\mapsto a_{(-1)}b$
and the derivation $\partial a_{(-n)}=n a_{(-n-1)}$.}
and $G[[t]]$-modules,
where 
$\fing_{\infty}$ is the infinite jet scheme of
$\fing$ (cf.\ \cite{EisFre01})
and $\fing$ is identified with $\fing^\star$.
Below we shall identify $\gr\, V^{\crit}(\fing)$ with $\C[\fing_{\infty}]$.
The natural projection $\fing_{\infty}\ra \fing$
gives the embedding $\C[\fing]\hookrightarrow \C[\fing_{\infty}]$.

Let $\{F_p \FFC\}$ be the induced filtration of $\FFC$,
$\gr\, \FFC$ the associated graded vertex algebra.
Certainly, the image of the natural embedding
$
\gr\, \FFC\hookrightarrow \gr\, V^{\crit}(\fing)
=\C[\fing_{\infty}]
$
is contained in 
$ \C[\fing_{\infty}]^{G[[t]]}$.

 Let $\bar p^{(1)}
,\dots, \bar p^{(\ell)}$,
where $\ell=\rank\, \fing$,
 be  homogeneous
generators
 of
the ring $
\C[\fing]^G\subset \C[\fing_{\infty}]^{G[[t]]}$.
The elements $\bar p^{(i)}_{(-j-1)}:=(\partial)^j \bar p^{(i)
}/j!$ are also $G[[t]]$-invariant
for all $j\geq 0$.

According to \cite{BeiDri96}
(see also \cite{EisFre01}), one has
\begin{align*}
 \C[\fing_{\infty}]^{G[[t]]}
=\C[\bar{p}^{(1)}_{(-j-1)},\dots 
\bar{p}^{(\ell)}_{(-j-1)}]_{j\geq 0}.
 \end{align*}
\begin{theorem}[\cite{FeiFre92}, see also \cite{Fre07}]\label{Th:FF}
The  embedding
\begin{align*}
 \gr\, \FFC \ra
\C[\fing_{\infty}]^{G[[t]]}
\end{align*}
is an isomorphism.
\end{theorem}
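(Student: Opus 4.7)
My plan is to establish injectivity essentially for free and then reduce surjectivity to the construction of enough central lifts of the generators of the invariant ring, which is the truly difficult step.

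First I would handle injectivity. Since the filtration on $\FFC$ is the induced one, $F_p\FFC = \FFC \cap F_p\Vcri$, the natural map $\gr\,\FFC \to \gr\,\Vcri = \C[\fing_\infty]$ is automatically injective. The filtration is $G[[t]]$-stable and $G[[t]]$ acts locally finitely on each graded quotient, so taking $G[[t]]$-invariants commutes with passage to $\gr$; this places the image of $\gr\,\FFC$ inside $\C[\fing_\infty]^{G[[t]]}$, as already noted before the statement.

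For surjectivity, the plan has two ingredients. The first is the Beilinson--Drinfeld presentation
$$
\C[\fing_\infty]^{G[[t]]} = \C[\bar p^{(i)}_{(-j-1)}]_{1\le i\le \ell,\ j\ge 0}
$$
as a polynomial algebra, recalled just above the statement. The second is to construct, for each homogeneous generator $\bar p^{(i)} \in \C[\fing]^G$ of degree $d_i$, a Segal--Sugawara type element $S^{(i)} \in \FFC$ lying in $F_{d_i}\Vcri$ whose symbol in $F_{d_i}\Vcri/F_{d_i-1}\Vcri$ is exactly $\bar p^{(i)}_{(-1)}\vac$. Because $\partial$ preserves the center of any vertex algebra, one then gets $(\partial^j/j!)\,S^{(i)} \in \FFC$ for all $j\geq 0$, and a direct filtration computation shows its symbol to be $\bar p^{(i)}_{(-j-1)}\vac$. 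These symbols generate $\C[\fing_\infty]^{G[[t]]}$ as a polynomial algebra, so $\gr\,\FFC$ exhausts the invariant ring and the embedding is an isomorphism.

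The main obstacle is producing the central elements $S^{(i)}$ for $i \geq 2$. For $i=1$ this is the classical Segal--Sugawara construction, where the quadratic Casimir produces a Virasoro-type vector that is central precisely at the critical value of $K$. For higher $i$ one needs the higher Sugawara operators of Feigin--Frenkel; the cleanest route runs through the Wakimoto free-field realization, embedding $\Vcri$ into a Heisenberg-type vertex algebra and characterizing $\FFC$ as the joint kernel of the screening operators associated with the simple roots of $\fing$. A Hilbert-series comparison of this joint kernel with $\C[\fing_\infty]^{G[[t]]}$ then simultaneously produces the $S^{(i)}$ and establishes the surjectivity of the embedding in a single stroke.
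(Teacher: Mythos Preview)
The paper does not prove this theorem at all: it is stated as a citation from \cite{FeiFre92} (with a pointer to \cite{Fre07}) and is used immediately afterwards to extract the generators $p^{(1)},\dots,p^{(\ell)}$ of $\FFC$. So there is no ``paper's own proof'' to compare against; your outline is in effect a sketch of the argument in those references, and as such it is broadly on target: injectivity from the induced filtration, the Beilinson--Drinfeld description of the invariant ring, and the reduction of surjectivity to the existence of central lifts $S^{(i)}$ of the $\bar p^{(i)}$, produced via the Wakimoto realization and screening-operator analysis.

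One remark on phrasing: your sentence ``taking $G[[t]]$-invariants commutes with passage to $\gr$'' is stronger than what you actually need or justify at that point. That statement, read literally, \emph{is} the theorem (since $\FFC=\Vcri^{G[[t]]}$), and local finiteness alone does not give it because $G[[t]]$ is not reductive. What you need for the inclusion $\gr\,\FFC\subset\C[\fing_\infty]^{G[[t]]}$ is only the trivial direction, that the symbol of a $G[[t]]$-invariant element is $G[[t]]$-invariant, which follows immediately from $G[[t]]$-stability of the filtration. You then establish the reverse inclusion by the lifting argument in your second paragraph, so the logic is fine once the overreach in that sentence is trimmed.
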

By Theorem \ref{Th:FF}
there exist  homogeneous
generators  $p^{(1)},\dots,p^{(\ell)}$
of $\FFC$
whose symbols are $\bar{p}^{(1)},\dots, \bar{p}^{(\ell)}$.
Let $d_i=\deg \bar {p}^{(i)}-1$,
so that
$d_1,\dots,d_{\ell}$ are the exponents of $\fing$.
The conformal weight of $p^{(i)}$ is 
by definition $d_i+1$.
We write
\begin{align*}
 Y(p^{(i)},z)&=p^{(i)}(z)=\sum_{n\in \Z}p^{(i)}_{(n)}z^{-n-1}
=\sum_{n\in \Z}p^{(i)}_{n}z^{-n-d_i-1},
\end{align*}
so that
\begin{align*}
[D, p^{(i)}_n]=n p^{(i)}_n.
\end{align*}
One has
\begin{align*}
 [x, p_{(n)}^{(i)}]=0\quad \text{for all $x\in \tfg$.}
\end{align*}
\subsection{The action of the Feigin--Frenkel 
center on objects with  critical level}
For $k\in\DC$ we denote by $\CO_k$ the category $\CO$ at level $k$, i.e.~ the direct summand of the category $\CO$
on which $K$ acts as multiplication with $k$. In particular, we denote by $\CO_{\crit}$ the category $\CO$ at critical level.
Let $M$ be an object of $\CO_{\crit}$.
Then $M$ 
is naturally a graded module over the vertex algebra $\Vcri$,
and hence, it is a graded module over its center $\FFC$.
Thus
$M$ can be viewed as a graded module over the
polynomial ring
\begin{align*}
 \CZ=\C[p^{(i)}_s; i=1,2,\dots, \ell,\ s\in \Z]
=\bigoplus_{n\in \Z}\CZ_n
\end{align*}
in an obvious manner.
Here
$\CZ_n$ is the subspace of $\CZ$
spanned by elements
$p^{(i_1)}_{n_1}\cdots p^{(i_r)}_{n_r}$
with $n_1+\dots +n_r=n$.
Set
\begin{align*}
\RZm =\C[p^{(i)}_n;i=1,\dots ,\ell, n<0]
=\bigoplus_{n\leq 0}\RZm_n\subset \CZ,
\end{align*}
where $\RZm_n=\RZm\cap \CZ_n$.

\begin{theorem}[{\cite{FreGai06}, see also  \cite[Theorem 9.5.3]{Fre07}}]
\label{th:FF-freeness}
For any $\lam\in \affh_{\crit}^{\star}$,
$\Delta(\lam)$ is free over $\RZm$.
Moreover,
the natural map $\CZ_n^-\to\Hom(\Delta(\lambda+n\delta), \Delta(\lambda))$
  is a bijection for all $n\leq 0$.
\end{theorem}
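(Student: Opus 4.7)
The plan is to reduce both assertions to the single statement that $\Delta(\lam)$ is a free $\RZm$-module, and then to establish this freeness by passing to the associated graded with respect to the PBW filtration, where the claim becomes a jet-scheme analogue of Kostant's theorem.

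First I would set up the natural map. Given $p\in \RZm_n$ with $n\le 0$, the vector $p\cdot v_\lam\in \Delta(\lam)$ (where $v_\lam$ denotes the canonical generator) has weight $\lam+n\delta$ since $[D,p]=np$, and is annihilated by $\hfn_+$ since $\CZ$ centralizes $\tfg$. The universal property of the Verma module thus yields a $\hfg$-homomorphism $\Delta(\lam+n\delta)\to\Delta(\lam)$ sending the highest weight generator to $p\cdot v_\lam$. Since $\Hom(\Delta(\lam+n\delta),\Delta(\lam))$ is canonically identified with $\Delta(\lam)^{\hfn_+}_{\lam+n\delta}$, bijectivity of the natural map is equivalent to the assertion that $p\mapsto p v_\lam$ induces an isomorphism $\RZm_n\isomap \Delta(\lam)^{\hfn_+}_{\lam+n\delta}$ for every $n\le 0$.

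Next I would pass to associated gradeds. Equip $U(\hfg)$ with the standard PBW filtration, inducing filtrations on $\Delta(\lam)$ and on $\RZm$. Then $\gr\,\Delta(\lam)\cong S(\hfn_-)$ as a graded $\hfh$-module (where $\hfn_-:=\bigoplus_{\alpha\in \hR^+}\hfg_{-\alpha}$), with $\hfn_+$ acting by derivations. By Theorem \ref{Th:FF}, $\gr\,\RZm$ embeds into $\C[\fing_\infty]^{G[[t]]}$ as the subring generated by the negative-mode symbols $\bar p^{(i)}_{(-j-1)}$, $j\ge 0$, and its action on $S(\hfn_-)$ is multiplication via a natural map into $\C[\fing_\infty]$. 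The key technical input is the arc-space analogue of Kostant's theorem: $\C[\fing_\infty]$ is a free module over $\C[\fing_\infty]^{G[[t]]}$, with a compatible description of the $\hfn_+$-invariants (cf.\ \cite{BeiDri96,EisFre01}). This yields, at the graded level, both that $S(\hfn_-)$ is free over $\gr\,\RZm$ and that its $\hfn_+$-invariants coincide with $\gr\,\RZm\cdot v_\lam$. A standard filtered-to-graded argument then lifts both conclusions back to $\Delta(\lam)$: it is free over $\RZm$, and $\Delta(\lam)^{\hfn_+}=\RZm\cdot v_\lam$, which gives both claims of the theorem simultaneously.

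The hardest step will be the arc-space Kostant theorem together with the identification of its $\hfn_+$-invariants, which extends classical finite-dimensional Kostant theory to the infinite jet scheme of $\fing$; the geometric arguments for this are contained in \cite{BeiDri96,EisFre01}. As a consistency check, the character of $\RZm$ equals $\prod_{n\ge 1}(1-e^{-n\delta})^{-\ell}$, and multiplying it by the expected character of $\rDelta(\lam)=\C\otimes_{\RZm}\Delta(\lam)$ reproduces $\cha\,\Delta(\lam)$, confirming that the $\RZm$-rank of $\Delta(\lam)$ predicted by the free-module structure is numerically consistent.
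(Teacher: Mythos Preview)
The paper does not give its own proof of this theorem; it is quoted from \cite{FreGai06} and \cite[Theorem~9.5.3]{Fre07} and used as a black box. Your sketch of the freeness assertion is essentially the argument found in those references: pass to the associated graded via the PBW filtration and invoke the jet-scheme analogue of Kostant's freeness theorem for $\C[\fg_\infty]$ over $\C[\fg_\infty]^{G[[t]]}$.

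One caution concerns the second assertion. You propose to obtain $\Delta(\lambda)^{\hfn_+}=\RZm\cdot v_\lambda$ from ``a compatible description of the $\hfn_+$-invariants'' that you attribute to \cite{BeiDri96,EisFre01}. But those references compute $G[[t]]$-invariants in $\C[\fg_\infty]$, which is neither the same space nor the same group action as $\hfn_+$ acting on $\gr\,\Delta(\lambda)\cong S(\hfn_-)$: the latter differs by the finite-dimensional tensor factor $S(\fn_-)$, and $\hfn_+$ is strictly smaller than $\fg[[t]]$. Injectivity of $\RZm_n\to\Delta(\lambda)^{\hfn_+}_{\lambda+n\delta}$ does follow immediately from freeness, but surjectivity---i.e., the statement that every singular vector at an imaginary weight shift arises from the center---requires a separate argument. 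In Frenkel's treatment this is supplied by the explicit description of $\FFC$ via screening operators (or, alternatively, via Wakimoto modules); see the discussion around \cite[Proposition~9.5.1]{Fre07}, which the present paper also invokes when deducing part~(4) of Theorem~\ref{theorem-ResVerma}. Your filtered-to-graded lift would go through once that graded invariant computation is in hand, but it is not a consequence of the jet-Kostant freeness alone.
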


We now construct a natural map 
\begin{align*}
\CZ_n\ra \CA_n.
\end{align*}
Recall that $L(\delta)$ is one-dimensional. So we can choose a
generator $l$ of $L(\delta)$. This gives us, for any $\hfg$-module
$M$, a map $M\otimes_\DC L(\delta)\to M$, $m\otimes l\mapsto m$. Since
$L(\delta)$ is trivial as a $\tfg$-module, this map is a $\tfg$-module
homomorphism. By iteration we get $\tfg$-module homomorphisms $T^n M=M\otimes_\DC
L(\delta)^{\otimes n}\to M$ for all $n\ge 0$. Using the element 
$l^\prime\in L(-\delta)$ that is dual to $l$ with respect to an isomorphism $L(\delta)\otimes_\DC L(-\delta)\to L(0)\cong\DC$ we
analogously get $\tfg$-module homomorphisms $T^{-n} M=M\otimes_\DC L(-\delta)^{\otimes
  n}\to M$. 

Now suppose that $M$ is contained in a critical block of $\CO$. Let $n\in\DZ$ and $z\in\CZ_n$. Then the composition of the map
$T^n M\to M$ constructed above and the action map $z\colon M\to M$
yields now a {\em $\hfg$-module homomorphism} $T^n M\to M$, as it now also commutes with the action of $D$. This 
gives us a natural transformation $T^n \to \id$ and we get an element in
$\CA_n$ that we associate with $z$. Hence we constructed a map $\CZ_n\to
\CA_n$.

\begin{proof}[Proof of Theorem \ref{theorem-ResVerma}]
The action of $\CZ_n$ on $\Delta(\lambda)$ factors over the
action of $\CA_n$. Hence Theorem \ref{th:FF-freeness} implies parts (1) and (2)
of Theorem  \ref{theorem-ResVerma}. 
It also implies  that
$\Delta(\lam)^-$ is spanned by $p^{(i)}_{-n}m$
with $i=1,\dots,\ell$,
$n>0$ and $m\in \Delta(\lam)$.
From the freeness assertion in Theorem \ref{th:FF-freeness}  we deduce
part (3) of Theorem  \ref{theorem-ResVerma}. Finally, its part (4) is
another Theorem of Feigin and Frenkel  \cite{FeiFre90}
which,
for instance,
follows 
from
\cite[Proposition 9.5.1]{Fre07}
and Theorem \ref{th:FF-freeness}.
\end{proof}

%
%
%
%
%
%

\section{Projective objects}\label{sec-Proj}
For a general equivalence class $\Lambda\in\hfhd/_{\textstyle{\sim}}$ the block
$\CO_\Lambda$ does not contain 
enough projective objects  (this includes, for example, all  critical
 equivalence classes). However, there is a way to overcome this problem
by restricting the set of possible weights for the modules under
consideration. This means that we have to consider the {\em truncated}
subcategories of $\CO$. 

\subsection{The truncated categories}

Let us fix a (not necessarily critical) equivalence class $\Lambda\in
\hfhd/_{\textstyle{\sim}}$. We use the following notation:
We write $\{\le \nu\}$ for the set $\{\nu^\prime\in\Lambda\mid
\nu^\prime\le \nu\}$ and use the similar notations $\{<\nu\}$, $\{\ge \nu\}$,
etc.~ for the analogously  defined sets. We consider the topology on
$\Lambda$ that is generated by the basic open sets $\{\le\nu\}$ for
$\nu\in\Lambda$. Hence a subset $\CJ$ of $\Lambda$ is open in this
topology if and only if for all $\nu,\nu^\prime\in\Lambda$ with
$\nu^\prime\le \nu$, $\nu\in\CJ$ implies 
$\nu^\prime\in\CJ$.

\begin{definition} For an open subset $\CJ\subset\Lambda$ we denote by $\CO_\Lambda^{\CJ}\subset\CO_\Lambda$ the full
subcategory of objects $M$ with the property that each of its subquotients is isomorphic to $L(\lambda)$ for some $\lambda\in \CJ$. 
\end{definition}
Note that $\CO_\Lambda^\CJ$ is an abelian category and that
$\Delta(\lambda)$ and $L(\lambda)$ are
contained in $\CO_\Lambda^{\CJ}$ if and only if $\lambda\in\CJ$. We
write $\CO_\Lambda^{\varle\nu}$ instead of $\CO_\Lambda^{\{\varle\nu\}}$.

\subsection{Submodules, quotients and subquotients} 
Let $\Lambda\subset\hfhd$ be an equivalence class and $\CJ$ an open
subset of $\Lambda$.
In this section we construct a left adjoint functor $M\mapsto M^{\CJ}$
to the inclusion functor $\CO_{\Lambda}^\CJ\to\CO_\Lambda$. Let us
denote by $\CI=\Lambda\setminus \CJ$ the closed complement of $\CJ$.

\begin{definition} Let $M$ be an object in $\CO_\Lambda$.
\begin{enumerate}
\item We define  $M_{\CI}\subset M$ as the submodule generated by the
  weight spaces $M_\nu$ with $\nu\in\CI$.
\item We define $M^{\CJ}$ as the quotient of $M$ by the submodule $M_{\CI}$.
\end{enumerate}
\end{definition}

Obviously, the map $M\mapsto M^\CJ$ defines a functor from
$\CO_\Lambda\to\CO_\Lambda^\CJ$ that is left adjoint to the
inclusion. We write $M^{\le\nu}$ instead of $M^{\{\le\nu\}}$ and $M^{<\nu}$
instead of $M^{\{<\nu\}}$, etc. If
$\CI^\prime\subset\CI\subset\Lambda$ are closed subsets, then there is
a natural inclusion $M_{\CI^\prime}\subset M_\CI$. 
If $\CJ^\prime\subset\CJ\subset\Lambda$ are open subsets, then there is
a canonical surjective map $M^\CJ\to M^{\CJ^\prime}$.

In addition to the submodules and quotient modules defined above we
will also need the following subquotient modules that are associated
with locally closed subsets. For any subset $\CK$ of $\Lambda$ define 
$$
\CK_-:=\bigcup_{\lambda\in \CK} \{\le \lambda\} \text{ and } \CK_+:=\bigcup_{\lambda\in \CK}\{\ge \lambda\}.
$$
Note that $\CK_+$ is closed and $\CK_-$ is open, and $\CK$ is locally closed if $\CK=\CK_-\cap\CK_+$.

\begin{definition} Suppose that $\CK$ is locally closed. We define $M_{[\CK]}$ as the image of the canonical decomposition
$M_{\CK_+}\inj M \sur M^{\CK_-}$.
\end{definition}
If $\CK=\{\lambda,\dots,\mu\}$, we write $M_{[\lambda,\dots,\mu]}$
instead of $M_{[\{\lambda,\dots,\mu\}]}$.

\subsection{Modules admitting a Verma flag}

\begin{definition} We say that $M\in\CO_\Lambda$ {\em admits a Verma flag}
  if there is a finite filtration
$$
0=M_0\subset M_1\subset \dots\subset M_n=M
$$
such that for all $i=1,\dots,n$ the quotient $M_i/M_{i-1}$ is isomorphic to a Verma module $\Delta(\mu_i)$ for some $\mu_i\in \hfhd$. 
\end{definition}
For each $\mu\in\hfhd$ the multiplicity 
$$
[M:\Delta(\mu)]:=\# \{i\mid \mu=\mu_i\}
$$
is independent of the chosen filtration.
It is a well-known fact that $\Ext^1(\Delta(\mu),\Delta(\lambda))\ne 0$ implies that $\lambda>\mu$. Hence we can find a filtration for $M$ as in the definition such that $\mu_i>\mu_j$ implies $i<j$. From this one easily gets the following lemma.

\begin{lemma} \label{lemma-Vermasubquots} Suppose that $M$ admits a Verma flag and suppose that $\CJ\subset\hfhd$ is open, $\CI\subset\hfhd$ is  closed and $\CK\subset\hfhd$ is locally closed. Then $M^\CJ$, $M_\CI$ and $M_{[\CK]}$ admit a Verma flag and for the multiplicities we have
\begin{align*}
[M^\CJ:\Delta(\mu)]&=
\begin{cases}
[M:\Delta(\mu)]&\text{ if $\mu\in\CJ$}\\
0&\text{ else,}
\end{cases}
\\
[M_\CI:\Delta(\mu)]&=
\begin{cases}
[M:\Delta(\mu)]&\text{ if $\mu\in\CI$}\\
0&\text{ else,}
\end{cases}
\\
[M_{[\CK]}:\Delta(\mu)]&=
\begin{cases}
[M:\Delta(\mu)]&\text{ if $\mu\in\CK$}\\
0&\text{ else}
\end{cases}
\end{align*}
for all $\mu\in\hfhd$.

\end{lemma}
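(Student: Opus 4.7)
The plan is to deduce all three assertions from a single combinatorial rearrangement procedure for Verma flags. The key input is the Ext-vanishing recalled just before the lemma: $\Ext^1(\Delta(\mu),\Delta(\lambda))=0$ unless $\lambda>\mu$. This allows us to swap two consecutive subquotients $\Delta(\mu_i),\Delta(\mu_{i+1})$ in any Verma flag whenever $\mu_i\not>\mu_{i+1}$, because in that case the subquotient $M_{i+1}/M_{i-1}$ splits.

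First I would handle the closed case. Given closed $\CI\subset\hfhd$ with downward-closed complement $\CJ$, I claim one can reorder the Verma flag of $M$ so that the subquotients indexed by $\CI$ form a sub-Verma-flag at the bottom. Indeed, if $\mu_i\in\CJ$ and $\mu_{i+1}\in\CI$ are consecutive in the flag, then $\mu_i>\mu_{i+1}$ would force $\mu_{i+1}\in\CJ$ by downward closure, contradicting $\mu_{i+1}\in\CI$; hence $\mu_i\not>\mu_{i+1}$ and the swap is permitted. Iterating yields a submodule $N\subset M$ whose Verma flag consists precisely of the $\Delta(\mu_i)$ with $\mu_i\in\CI$. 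To identify $N=M_\CI$, note that $N$ is generated by lifts of the successive highest weight vectors of its flag, all of weight in $\CI$, so $N\subset M_\CI$; conversely $M/N$ inherits a Verma flag with subquotients $\Delta(\mu_j)$, $\mu_j\in\CJ$, so all weights of $M/N$ lie in $\CJ$ by downward closure, forcing $M_\nu\subset N$ for every $\nu\in\CI$. The multiplicity formula can then be read off the flag of $N$. The case $M^\CJ=M/M_\CI$, with $\CI=\hfhd\setminus\CJ$, reduces to this at once.

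For the locally closed case, the open case gives $M^{\CK_-}$ a Verma flag indexed by $\CK_-$, and its weight support is contained in $\CK_-$ (subweights of the Verma quotients, and $\CK_-$ is downward closed). Hence any weight vector of $M^{\CK_-}$ at a weight in $\CK_+$ sits at a weight in $\CK=\CK_+\cap\CK_-$ and lifts to a weight vector of $M$ that lies in $M_{\CK_+}$; this identifies $M_{[\CK]}$ with the submodule $(M^{\CK_-})_{\CK_+}$ of $M^{\CK_-}$. Applying the closed case a second time, now to $M^{\CK_-}$ and the closed set $\CK_+$, produces a Verma flag of $M_{[\CK]}$ with subquotients $\Delta(\mu)$ for those $\mu$ in the flag of $M^{\CK_-}$ that lie in $\CK_+$, i.e.\ for $\mu\in\CK$, with multiplicities $[M^{\CK_-}:\Delta(\mu)]=[M:\Delta(\mu)]$. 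The only real obstacle is the identification $N=M_\CI$ in the closed case; everything else is formal iteration combined with the elementary observation that a weight-generated submodule is determined by the weight support of its quotient.
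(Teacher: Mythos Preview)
Your proposal is correct and follows essentially the same approach that the paper indicates: the paper only remarks that the $\Ext^1$-vanishing allows one to reorder the Verma flag so that $\mu_i>\mu_j$ implies $i<j$ and then says the lemma follows easily; your argument is a careful unfolding of exactly this idea, including the identification $N=M_{\CI}$ and the reduction of the locally closed case to the composite $(M^{\CK_-})_{\CK_+}$.
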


\subsection{Projective objects}

We say that  $\CJ\subset \Lambda$ is {\em bounded} if for any $\lambda\in\CJ$ the set of $\mu\in\CJ$ with $\mu\ge\lambda$ is finite. 

\begin{theorem}[\cite{Fie03}, cf.~ also \cite{AF2}] Suppose that  $\CJ\subset \Lambda$ is open and bounded and let $\lambda\in \CJ$.
\begin{enumerate}
\item There exists an (up to isomorphism unique) projective
  cover $P^{\CJ}(\lambda)$ of $L(\lambda)$ in $\CO^{\CJ}_\Lambda$. 
\item If  $\CJ^\prime\subset \CJ$ is an open subset and $\lambda\in\CJ^\prime$,  then
  we have  an isomorphism
  $(P^{\CJ}(\lambda))^{\CJ^\prime}\cong P^{\CJ^\prime}(\lambda)$.
\item The object $P^{\CJ}(\lambda)$ admits a Verma flag and  for each $\gamma\in \Lambda$ we have the BGG-reciprocity formula  
$$
\left(P^{\CJ}(\lambda):\Delta(\gamma)\right)=
\begin{cases}
0 & \text{if $\gamma\not\in\CJ$,} \\
[\nabla(\gamma):L(\lambda)] & \text{if $\gamma\in \CJ$.}
\end{cases}
$$
\end{enumerate}
\end{theorem}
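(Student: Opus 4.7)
My plan is to prove the three parts together by an induction on the cardinality of the finite set $\CJ_{\geq\lambda}:=\{\mu\in\CJ\mid\mu\geq\lambda\}$, which is finite by the boundedness hypothesis on $\CJ$.

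For the base case, when $\lambda$ is maximal in $\CJ$, I would take $P^\CJ(\lambda):=\Delta(\lambda)$. The Kac--Kazhdan theorem together with the openness of $\CJ$ give $\Delta(\lambda)\in\CO^\CJ_\Lambda$, and projectivity reduces to showing that for any $N\in\CO^\CJ_\Lambda$ and $\alpha\in\hR^+$ the weight space $N_{\lambda+\alpha}$ vanishes: if $\eta=\lambda+\alpha$ were a weight of $N$, its occurrence in a composition factor $L(\nu)$ with $\nu\in\CJ$ would force $\eta\leq\nu$ and hence $\nu>\lambda$ with $\nu\in\CJ$, contradicting maximality. Consequently any lift $\tilde v\in N_\lambda$ of $f(v_\lambda)\in M_\lambda$ is automatically $\hfn_+$-invariant, producing the required lifting of any map $f\colon\Delta(\lambda)\to M$ through an epimorphism $N\to M$.

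For the inductive step, pick $\mu\in\CJ_{\geq\lambda}$ maximal with $\mu\neq\lambda$ (such $\mu$ is then automatically maximal in all of $\CJ$) and set $\CJ':=\CJ\setminus\{\mu\}$, which is open and satisfies $|\CJ'_{\geq\lambda}|<|\CJ_{\geq\lambda}|$. By induction $P^{\CJ'}(\lambda)$ exists with the prescribed Verma flag. I would then form the universal extension
$$
0\to\Delta(\mu)^{\oplus d}\to P^\CJ(\lambda)\to P^{\CJ'}(\lambda)\to 0,
$$
with $d:=\dim\Ext^1_{\CO^\CJ_\Lambda}(P^{\CJ'}(\lambda),\Delta(\mu))$; this space is finite-dimensional because the extension classes are controlled by the finitely many weight spaces of $P^{\CJ'}(\lambda)$ at weights $\mu-\beta$ with $\beta\in\hR^+$. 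To verify projectivity of $P^\CJ(\lambda)$ in $\CO^\CJ_\Lambda$, for any $X\in\CO^\CJ_\Lambda$ I would analyze the short exact sequence $0\to X_{\{\mu\}}\to X\to X^{\CJ'}\to 0$, where $X_{\{\mu\}}$ is generated by its weight-$\mu$ subspace and hence is a quotient of a direct sum of copies of $\Delta(\mu)$; the long exact sequence of Ext, the inductive vanishing $\Ext^1(P^{\CJ'}(\lambda),X^{\CJ'})=0$, and the defining property of the universal extension against $\Delta(\mu)$ together yield $\Ext^1_{\CO^\CJ_\Lambda}(P^\CJ(\lambda),X)=0$.

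Parts (2) and (3) would then follow essentially formally. For (2), the functor $(\cdot)^{\CJ'}$, being left adjoint to the inclusion, preserves projectivity, and the surjection $P^\CJ(\lambda)\to L(\lambda)$ descends to show that $(P^\CJ(\lambda))^{\CJ'}$ has $P^{\CJ'}(\lambda)$ as a direct summand; the multiplicity formula of (3) rules out extra summands. The Verma flag in (3) is read off directly from the inductive construction, and BGG reciprocity follows from the standard double calculation
$$
(P^\CJ(\lambda):\Delta(\gamma))=\dim\Hom(P^\CJ(\lambda),\nabla(\gamma))=[\nabla(\gamma):L(\lambda)],
$$
where the first equality uses the vanishing $\Ext^1(\Delta(\sigma),\nabla(\gamma))=0$ for all $\sigma,\gamma$ and the second uses projectivity together with $\Hom(L(\nu),\nabla(\gamma))=\delta_{\nu,\gamma}\DC$. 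The main obstacle I anticipate lies in the inductive step of (1): the universal extension argument requires controlling Ext against arbitrary $X\in\CO^\CJ_\Lambda$, not only against $\Delta(\mu)$, and the reduction via $X_{\{\mu\}}$ must be handled carefully since $X_{\{\mu\}}$ can be an infinite sum of highest weight modules whose structure as a quotient of $\Delta(\mu)$-copies depends on the $\hfn_+$-socle of $X$ at weight $\mu$.
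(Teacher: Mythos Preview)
The paper does not actually prove this theorem; it is quoted from \cite{Fie03} (see also \cite{AF2}), so there is no in-paper proof to compare against. Your inductive construction via universal extensions by $\Delta(\mu)$ for $\mu$ maximal is exactly the standard argument used in those references (going back to \cite{RCW82} in the Kac--Moody setting), and your derivation of BGG reciprocity via $\dim\Hom(P^{\CJ}(\lambda),\nabla(\gamma))$ is the usual one.

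The technical obstacle you flag at the end is real but has a clean resolution you did not mention: since $P^{\CJ}(\lambda)$ admits a finite Verma flag it is finitely generated, so to check projectivity it suffices to lift maps through surjections $N\twoheadrightarrow M$ with $M$ and $N$ finitely generated (replace $M$ by the image of $P^{\CJ}(\lambda)$ and $N$ by a finitely generated submodule surjecting onto it). A finitely generated object of $\CO$ has finite-dimensional weight spaces, so in the reduction $0\to X_{\{\mu\}}\to X\to X^{\CJ'}\to 0$ the weight space $X_\mu$ is finite-dimensional and $X_{\{\mu\}}$ is a quotient of finitely many copies of $\Delta(\mu)$. This removes the infinite-sum issue. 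One further point to make explicit: to pass from $\Ext^1(P^{\CJ}(\lambda),\Delta(\mu))=0$ to $\Ext^1(P^{\CJ}(\lambda),X_{\{\mu\}})=0$ you should not use the long exact sequence for $0\to K\to\Delta(\mu)^{\oplus k}\to X_{\{\mu\}}\to 0$ directly (an unwanted $\Ext^2$ appears); instead argue by lifting, using that $\Delta(\mu)$ is itself projective in $\CO^{\CJ}_\Lambda$ and that $K\in\CO^{\CJ'}_\Lambda$, so both $\Ext^1(P^{\CJ}(\lambda),\Delta(\mu)^{\oplus k})$ and $\Ext^1(P^{\CJ}(\lambda),K)$ vanish, whence any map to $X_{\{\mu\}}$ lifts through any epimorphism onto it.
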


By Lemma \ref{lemma-Vermasubquots} and the theorem above, if $\nu,\lambda\in\Lambda$ are such that $\nu\ge
\lambda$, then the module $P^{\CJ}(\lambda)_{[\nu]}$ does not depend on
the open set $\CJ$ as long as $\nu\in\CJ$. We denote this object by
$P(\lambda)_{[\nu]}$.

Let $\CJ\subset\Lambda$  be open and bounded and $\lambda\in \CJ$. Suppose that $\nu\in \CJ$ is a maximal element. Then we can consider $P(\lambda)_{[\nu]}$ as a subspace in $P^{\CJ}(\lambda)$. 
We will need the following result later.
\begin{lemma}\label{lemma-HomdualVerma} Under the above assumptions, the restriction map 
$$
\Hom(P^{\CJ}(\lambda), \nabla(\nu))
 \to \Hom(P(\lambda)_{[\nu]}, \nabla(\nu)) 
$$
is a bijection.
\end{lemma}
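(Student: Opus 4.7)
The plan is to reduce the bijectivity claim to two standard compatibilities: (a) a homomorphism to $\nabla(\nu)$ must vanish on weight spaces that are not $\le\nu$, and (b) the usual BGG-orthogonality between Vermas and dual Vermas kills the contributions from weights strictly below $\nu$ in the Verma flag.

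First I would analyse the weights of $P=P^{\CJ}(\lambda)$. By the BGG-reciprocity part of the preceding theorem, together with Lemma \ref{lemma-Vermasubquots}, $P$ admits a Verma flag whose subquotients are of the form $\Delta(\mu_i)$ with $\mu_i\in\CJ$. If some weight space $P_\mu$ with $\mu>\nu$ were nonzero, then $\mu\le\mu_i$ for some subquotient, giving $\mu_i>\nu$ with $\mu_i\in\CJ$; this contradicts the maximality of $\nu$ in $\CJ$. Hence $P_\mu=0$ for every $\mu>\nu$. Since $\nabla(\nu)=\Delta(\nu)^\star$ has the same weights as $\Delta(\nu)$, namely weights $\le\nu$, any $\phi\in\Hom(P,\nabla(\nu))$ kills every weight space of $P$ indexed by an element of $\CI:=\Lambda\setminus\{\le\nu\}$, hence kills the submodule $P_{\CI}$. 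Thus $\phi$ factors canonically through $P^{\le\nu}=P/P_{\CI}$, giving a natural identification
$$
\Hom(P,\nabla(\nu))\;\isomap\;\Hom(P^{\le\nu},\nabla(\nu)).
$$

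Next I would use the tautological short exact sequence
$$
0\to P(\lambda)_{[\nu]}\to P^{\le\nu}\to P^{<\nu}\to 0,
$$
which exists because $\{\nu\}=\{\le\nu\}\setminus\{<\nu\}$ is locally closed and both halves of $P_{[\nu]}$ agree in this situation (by the previous paragraph, $P_{\{\ge\nu\}}$ is concentrated at weight $\nu$ and its image in $P^{\le\nu}$ is exactly $P(\lambda)_{[\nu]}$). By Lemma \ref{lemma-Vermasubquots} the quotient $P^{<\nu}$ admits a Verma flag whose subquotients are $\Delta(\mu)$ with $\mu\in\CJ\cap\{<\nu\}$. Applying $\Hom(-,\nabla(\nu))$ and invoking the standard BGG orthogonality
$$
\Ext^i(\Delta(\mu),\nabla(\nu))=\delta_{i,0}\delta_{\mu,\nu}\DC
$$
together with induction on the length of the Verma flag, both $\Hom(P^{<\nu},\nabla(\nu))$ and $\Ext^1(P^{<\nu},\nabla(\nu))$ vanish. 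The resulting long exact sequence then forces the restriction $\Hom(P^{\le\nu},\nabla(\nu))\to\Hom(P(\lambda)_{[\nu]},\nabla(\nu))$ to be an isomorphism, and composing with the identification from the previous paragraph gives the desired bijection.

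The only nontrivial input is the BGG orthogonality $\Ext^i(\Delta(\mu),\nabla(\nu))=\delta_{i,0}\delta_{\mu,\nu}\DC$ inside the affine category $\CO$. I expect this to be the main obstacle to address explicitly: while it is a classical statement, at critical level one cannot simply quote a highest-weight-category structure for $\CO_\Lambda$ itself. The cleanest route is to compute the Ext groups in a truncated subcategory $\CO_\Lambda^{\CJ'}$ where projective covers exist (the theorem cited from \cite{Fie03}), using BGG-reciprocity to identify $\Hom$'s and then dimension-shifting, so that the vanishing reduces to the elementary statement $\Hom(\Delta(\mu),\nabla(\nu))=\delta_{\mu,\nu}\DC$, which follows from weight considerations and the fact that $L(\nu)$ is the socle of $\nabla(\nu)$.
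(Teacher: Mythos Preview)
Your proof is correct but follows a different route from the paper. The paper argues more directly: it first shows the restriction map is injective (if $f$ vanishes on $P(\lambda)_{[\nu]}$ then it factors through $(P^{\CJ}(\lambda))^{\CJ\setminus\{\nu\}}\cong P^{\CJ\setminus\{\nu\}}(\lambda)$, hence cannot hit the simple socle $L(\nu)\subset\nabla(\nu)$, so $f=0$), and then matches dimensions via BGG-reciprocity, using
\begin{align*}
\dim\Hom(P^{\CJ}(\lambda),\nabla(\nu))&=[\nabla(\nu):L(\lambda)]=(P(\lambda)_{[\nu]}:\Delta(\nu))\\
&=\dim\Hom(P(\lambda)_{[\nu]},\nabla(\nu)),
\end{align*}
the last equality because $P(\lambda)_{[\nu]}$ is a direct sum of copies of $\Delta(\nu)$ and $\dim\Hom(\Delta(\nu),\nabla(\nu))=1$. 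This sidesteps any $\Ext$ computation entirely.

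Your long-exact-sequence approach is structurally clean, but your worry about BGG-orthogonality at critical level is overstated: the only nontrivial vanishing you actually need is $\Ext^1(\Delta(\mu),\nabla(\nu))=0$ for $\mu<\nu$, and this follows elementarily by dualizing an extension to $0\to\nabla(\mu)\to E^\star\to\Delta(\nu)\to 0$, observing that any preimage in $E^\star$ of the highest weight vector of $\Delta(\nu)$ is already $\hfn_+$-invariant (since $\nabla(\mu)$ has no weights above $\mu<\nu$), and splitting via the universal property of $\Delta(\nu)$. No truncation or highest-weight-category machinery is required for this step.
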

\begin{proof} As a first step we show that the map is
  injective. Suppose that $f\in\Hom(P^{\CJ}(\lambda), \nabla(\nu))$ 
  is contained in the kernel. Then $P(\lambda)_{[\nu]}$ is contained
  in the kernel of $f$, hence $f$ factors over the map
  $P^{\CJ}(\lambda)\to (P^{\CJ}(\lambda))^{\CJ\setminus\{\nu\}}\cong P^{\CJ\setminus\{\nu\}}(\lambda)$. But then it cannot
  contain the simple socle $L(\nu)\subset \nabla(\nu)$ in its image,
  so $f=0$. 

  Now we have 
\begin{align*}
\dim\Hom(P^{\CJ}(\lambda), \nabla(\nu)) &=
[\nabla(\nu):L(\lambda)]\\
&= (P^{\CJ}(\lambda):\Delta(\nu))\\
&= (P(\lambda)_{[\nu]}:\Delta(\nu))\\
&=\dim \Hom(P(\lambda)_{[\nu]},\nabla(\nu)).
\end{align*}
The last equation holds since $P(\lambda)_{[\nu]}$ is isomorphic to a direct sum of copies of $\Delta(\nu)$ and $\dim\Hom(\Delta(\nu),\nabla(\nu))=1$.  Since the map referred to in the lemma is  injective and since the dimensions of its source and its image coincide, it is bijective. 
\end{proof}

\subsection{The Casimir operator}

We call a $\hfg$-module $M$ {\em smooth} if for all $m\in M$ we have
$\hfg_\alpha.m=0$ for all but a finite number of positive affine roots
$\alpha\in\hR$. In particular, each locally $\hfb$-finite $\hfg$-module is smooth. Recall that on the full subcategory $\hfg\catmod^{sm}$ of
smooth representations there is an endomorphism of the identity
functor, the {\em Casimir operator} $C\colon \id\to \id$ (its
construction can be found, for example, in \cite[Section 2.5]{Kac90}). We will
only need the following property of $C$.

\begin{proposition} Let $\lambda\in\hfhd$. Then $C$ acts on
  $\Delta(\lambda)$ as multiplication with the scalar $c_\lambda:=(\lambda+\rho,\lambda+\rho)-(\rho,\rho)\in\DC$.
\end{proposition}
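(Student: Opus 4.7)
The plan is to compute $C$ explicitly on the highest weight generator of $\Delta(\lambda)$ and then propagate the scalar action by centrality of $C$.

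First, I would recall the standard expression for the Casimir on smooth $\hfg$-modules. Using the invariant form $(\cdot,\cdot)$, pick dual bases $\{u_i\}$, $\{u^i\}$ of $\hfh$, and, for each positive affine root $\alpha$, dual bases $\{e_{\alpha,k}\}\subset\hfg_\alpha$ and $\{e_{-\alpha,k}\}\subset\hfg_{-\alpha}$. Then (cf.~\cite[\S2.5]{Kac90})
$$C = 2t_\rho + \sum_i u_i u^i + 2\sum_{\alpha\in\hR^+}\sum_k e_{-\alpha,k}e_{\alpha,k},$$
where $t_\rho\in\hfh$ is the element dual to $\rho$, i.e.~$\langle\mu,t_\rho\rangle=(\mu,\rho)$ for $\mu\in\hfhd$. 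Smoothness of the module guarantees that this infinite sum acts pointwise. The imaginary root spaces $\hfg_{n\delta}=\fh\otimes t^n$ are of dimension $\rank\,\fg$, but the form still pairs $\hfg_{n\delta}$ and $\hfg_{-n\delta}$ non-degenerately, so dual bases exist there as well.

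Next, I would evaluate $C$ on the canonical generator $v_\lambda\in\Delta(\lambda)_\lambda$. Since $e_{\alpha,k}v_\lambda=0$ for every $\alpha\in\hR^+$ and every $k$, each summand $e_{-\alpha,k}e_{\alpha,k}$ kills $v_\lambda$. The standard dual-basis identity gives $\sum_i u_i u^i\,v_\lambda=(\lambda,\lambda)\,v_\lambda$, and $2t_\rho\,v_\lambda=2(\lambda,\rho)\,v_\lambda$. Summing yields
$$Cv_\lambda = \bigl((\lambda,\lambda)+2(\lambda,\rho)\bigr)v_\lambda = \bigl((\lambda+\rho,\lambda+\rho)-(\rho,\rho)\bigr)v_\lambda = c_\lambda\,v_\lambda.$$

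Finally, the fact that $C$ is an endomorphism of the identity functor means $C$ commutes with every $\hfg$-action map on a smooth module. In particular, for $x\in U(\hfg)$ we have $C(xv_\lambda)=xCv_\lambda=c_\lambda\,xv_\lambda$, and since $\Delta(\lambda)=U(\hfg)v_\lambda$ this forces $C=c_\lambda\cdot\id$ on all of $\Delta(\lambda)$. I do not anticipate a genuine obstacle; this is the direct Kac-Moody analogue of the classical finite-type computation. The only technical point is that the imaginary-root contributions be handled on the same footing as the real-root ones, which is immediate because $\hfg_{n\delta}\subset\hfn_+$ for $n\geq 1$ and so annihilates $v_\lambda$.
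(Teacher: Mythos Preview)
Your argument is correct and is precisely the standard computation from \cite[\S2.5]{Kac90}. The paper itself does not supply a proof of this proposition: it is stated as a known result, with a reference to Kac's book for the construction of $C$, and is used only as input for the subsequent lemmas. So there is nothing to compare against beyond noting that you have written out the textbook argument the paper is implicitly invoking.
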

The construction
of $C$ depends on $(\cdot,\cdot)$, hence there is no ambiguity in the
statement of the proposition. 

Let $\Lambda\subset\hfhd$ be a (not necessarily critical) equivalence class. For 
$\lambda,\mu\in \Lambda$ we have  $c_\lambda=c_\mu$, so $\Lambda$ defines a unique scalar  $c_\Lambda\in\DC$ such that $C$
acts by multiplication with $c_\Lambda$ on each Verma module in $\CO_\Lambda$.

Now suppose that 
$\lambda,\mu\in\Lambda$ are such that they form an atom in the
partially ordered set $\Lambda$, i.e.~  suppose that $\lambda< \mu$
and that there is no $\nu\in\Lambda$ with $\lambda<\nu<\mu$. Moreover, assume that $\ol\lambda\ne\ol\mu$. Then the
object $P^{\varle\mu}(\lambda)$ is an extension of the Verma modules
$\Delta(\lambda)$ and $\Delta(\mu)$, each occurring once (this is, by the BGG-reciprocity, equivalent to $[\Delta(\lambda):L(\lambda)]=[\Delta(\mu):L(\lambda)]=1$, which  can be deduced easily from 
the analog of Jantzen's sum formula in the Kac-Moody case,
cf.~\cite{KK79}). Hence we have a short exact sequence
$$
0\to \Delta(\mu)\to P^{\varle\mu}(\lambda)\to\Delta(\lambda)\to 0.
$$

\begin{lemma} The endomorphism $C-c_\Lambda\id\colon
  P^{\varle\mu}(\lambda)\to P^{\varle\mu}(\lambda)$ is non-zero.
\end{lemma}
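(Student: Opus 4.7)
I would argue by contradiction, using a one-parameter deformation to promote $P:=P^{\varle\mu}(\lambda)$ to a module on which the Casimir has two distinct eigenvalues, then show that the assumed vanishing of $C-c_\Lambda$ on $P$ would spread that eigenvalue separation over the integral model and force $P$ itself to split.

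\emph{Setup and deformed projective.} Since $\ol\lambda\ne\ol\mu$ and the invariant form restricted to $\fhd$ is non-degenerate, there is a $\tau\in\fhd$ with $(\mu-\lambda,\tau)\neq 0$. Let $S:=\DC[[t]]$, $\lambda_S:=\lambda+t\tau$, $\mu_S:=\mu+t\tau$. Following (and adapting to the affine case) the deformation framework of \cite{AJS94,Fie03} for truncated category $\CO$, I would construct an $S$-flat deformed projective $\tilde P$ with $\tilde P/t\tilde P\cong P$ and Verma flag
$0\to\tilde\Delta(\mu_S)\to\tilde P\to\tilde\Delta(\lambda_S)\to 0$,
lifting the original short exact sequence.

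\emph{The Casimir on $\tilde P$.} The Casimir $\tilde C$ acts on $\tilde\Delta(\nu_S)$ by the scalar $c_{\nu_S}=(\nu_S+\rho,\nu_S+\rho)-(\rho,\rho)\in S$. Because $c_\lambda=c_\mu$, a short calculation gives $c_{\lambda_S}-c_{\mu_S}=2t(\lambda-\mu,\tau)$, which is $t$ times a unit in $S$. The endomorphism $E:=\tilde C-c_{\mu_S}$ of $\tilde P$ vanishes on the submodule $\tilde\Delta(\mu_S)$ and has image contained in it, hence satisfies $E^2=(c_{\lambda_S}-c_{\mu_S})\,E$. Moreover $E\bmod t$ equals the operator $C-c_\Lambda$ on $P$.

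\emph{Contradiction.} Assume $C-c_\Lambda=0$ on $P$. Then $E\equiv 0\pmod t$ in $\End_{\hfg\otimes S}(\tilde P)$; by $S$-flatness of $\tilde P$ this endomorphism ring is $t$-torsion-free, so $E\in t\cdot\End_{\hfg\otimes S}(\tilde P)$, and therefore $\Pi:=E/(c_{\lambda_S}-c_{\mu_S})$ is a bona fide element of $\End_{\hfg\otimes S}(\tilde P)$. The relation $E^2=(c_{\lambda_S}-c_{\mu_S})E$ makes $\Pi$ idempotent, and it is nontrivial since $\tilde C$ has the two distinct scalar eigenvalues $c_{\lambda_S}\ne c_{\mu_S}$ over $\mathrm{Frac}(S)$. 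Thus $\tilde P\cong\Pi\tilde P\oplus(1-\Pi)\tilde P$ splits over $S$ into nonzero flat summands; reducing modulo $t$ yields a nontrivial direct sum decomposition of $P$, contradicting the indecomposability of the projective cover.

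\emph{Main obstacle.} The heart of the argument is a one-line linear-algebra computation ($E^2=(c_{\lambda_S}-c_{\mu_S})E$), so the principal technical point is the deformation step: producing the $S$-flat deformed projective $\tilde P$ with the prescribed Verma flag in the truncated affine setting. Everything else reduces to this general deformation-theoretic input, for which the strategy of \cite{AJS94,Fie03} must be adapted to $\hfg$.
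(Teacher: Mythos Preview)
Your argument is essentially the paper's own proof: deform over $S=\DC[[t]]$ using the framework of \cite{Fie03}, observe that the two deformed Casimir eigenvalues differ by $t$ times a unit, and show that if $C-c_\Lambda$ vanished on $P$ then dividing the deformed Casimir shift by this difference would produce a splitting of the deformed projective, contradicting indecomposability. The paper phrases the last step as ``$t^{-1}(C-c_\Lambda\id)$ acts diagonally with eigenvalues distinct mod $t$'', which is your idempotent argument in different words.

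One small slip: your justification of $E^2=(c_{\lambda_S}-c_{\mu_S})E$ is not quite right. You say $E=\tilde C-c_{\mu_S}$ ``vanishes on the submodule $\tilde\Delta(\mu_S)$ and has image contained in it''; but if both of those held you would get $E^2=0$, not the stated relation. In fact $E$ does \emph{not} have image in $\tilde\Delta(\mu_S)$ (it acts by the nonzero scalar $c_{\lambda_S}-c_{\mu_S}$ on the quotient). The correct reasoning is that $E$ annihilates the submodule while $E-(c_{\lambda_S}-c_{\mu_S})$ annihilates the quotient, hence $(\tilde C-c_{\mu_S})(\tilde C-c_{\lambda_S})=0$, which rearranges to your identity. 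With this fix the rest of your argument goes through unchanged.
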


\begin{proof} For the proof we use deformation theory,
  cf.~\cite{Fie03} and \cite{AF2}. Denote by $A=\DC[[t]]$ the completed polynomial ring in
  one variable and by $Q=\Quot A$ its quotient field. Let us fix $\gamma\in \hfhd$ with the property that $(\gamma+\rho,\lambda)\ne(\gamma+\rho,\mu)$. Let $S=S(\hfh)$
  be the symmetric algebra over $\hfh$ and consider the algebra map
  $\tau\colon S\to A$ that is determined by $\tau(h)=\gamma(h)t$ for all
  $h\in\hfh$. This makes $A$, and hence $Q$, into a local $S$-algebra.

  In \cite{Fie03} we
  constructed the deformed categories $\CO_A$ and $\CO_Q$ as full
  subcategories of $\hfg\otimes_\DC A\catmod$ and $\hfg\otimes_\DC
  Q\catmod$. We showed that the  functors $\otimes_A \DC$ and
  $\otimes_A Q$ induce functors $\CO_A\to\CO$ and
  $\CO_A\to\CO_Q$. The categories $\CO_A$ and $\CO_Q$ contain {\em deformed Verma modules} $\Delta_A(\nu)$ and $\Delta_Q(\nu)$, resp. On their highest weight spaces the Cartan algebra $\hfh$ acts by the character $\nu+\tau$, which is considered as a linear map from $\hfh$ to $A$ and $Q$, resp. In particular, the Casimir operator $C$ acts on $\Delta_Q(\nu)$ as multiplication with $c_{\nu+\gamma t}=(\nu+\gamma t+\rho,\nu+\gamma t+\rho)-(\rho,\rho)$.

  The analogous definition as in the non-deformed case gives
  truncated categories $\CO_A^{\varle\mu}$ and $\CO_Q^{\varle\nu}$
  for all $\nu\in\hfhd$. For 
  any $\nu,\nu^\prime\in\hfhd$ there is a projective
  object $P^{\varle\nu}_A(\nu^\prime)$ in $\CO_A^{\varle\nu}$ such that
  $P^{\varle\nu}_A(\nu^\prime)\otimes_A\DC\cong
  P^{\varle\nu}(\nu^\prime)$. Moreover, we have
$$
\Hom(P^{\varle\nu}_A(\nu^\prime_1),
P^{\varle\nu}_A(\nu_2^\prime))\otimes_A\DC=\Hom(P^{\varle\nu}(\nu_1^\prime),
P^{\varle\nu}(\nu_2^\prime)).
$$

The category $\CO_Q$ is semi-simple, i.e.~ each object is isomorphic
to a direct sum of Verma modules $\Delta_Q(\lambda)$. Now each 
 $P^{\varle\nu}_A(\nu^\prime)\otimes_A Q$ admits a $Q$-deformed Verma flag, hence it splits into a
direct sum of $Q$-deformed Verma modules. The Verma multiplicities of $P^{\varle\nu}_A(\nu^\prime)$, of $P^{\varle\nu}(\nu^\prime)$ and of $P_A^{\varle\nu}(\nu^\prime)\otimes_A Q$ coincide.

Now let $\lambda,\mu\in\Lambda$ be as in the statement of the lemma.  Note that $c_{\lambda+\gamma t}\equiv c_{\mu+\gamma t}\mod t$, but our choice of $\gamma$ implies $c_{\lambda+\gamma t}\ne c_{\mu+\gamma t}$. Let us suppose that the action of $C-c_{\Lambda}\id$ on $P^{\varle\mu}(\lambda)$ was
zero.
From the above, our assumptions on $\lambda$ and $\mu$ and the Jantzen sum formula we get an inclusion
$$
P^{\varle\mu}_A(\lambda)\subset P^{\varle\mu}_A(\lambda)\otimes_A Q\cong\Delta_Q(\lambda)\oplus\Delta_Q(\mu).
$$ 
On each of the modules above the Casimir operator acts. Our assumptions imply that the image of the action of $C-c_\Lambda\id$ on the module on the left is contained in $tP^{\varle\mu}_A(\lambda)$, hence $t^{-1}(C-c_\Lambda\id)$ is a well-defined operator on $P^{\varle\mu}_A(\lambda)$. On the module on the right this operator acts diagonally with eigenvalues in $\DC[[t]]$ which are distinct modulo $t$. Hence $P^{\varle\mu}_A(\lambda)$ decomposes according to the inclusion above, which clearly cannot be the case.
\end{proof}

\begin{lemma}\label{lemma-Casimir} Suppose that
  $\lambda,\mu\in\Lambda$ are as above and that
  $$
  0\to \Delta(\mu)\to M\to X\to 0
  $$
  is a short exact sequence, where $X$ is a module of highest weight $\lambda$. 
  If $C$ acts on $M$ as a scalar, then there is a
  submodule $Y$ of $M$ with highest weight $\lambda$ that maps surjectively onto $X$. 
\end{lemma}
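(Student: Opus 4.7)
The plan is to realize $Y$ as the image in $M$ of a suitable morphism from the projective cover $P^{\varle\mu}(\lambda)$, and then to invoke the preceding lemma to force this morphism to kill the submodule $\Delta(\mu)\subset P^{\varle\mu}(\lambda)$.

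First I would note that $M$ lies in the truncated subcategory $\CO_\Lambda^{\varle\mu}$, since it is an extension of a module of highest weight $\lambda\varle\mu$ by $\Delta(\mu)$, and that the scalar by which $C$ acts on $M$ is forced to be $c_\Lambda$ by the presence of $\Delta(\mu)\subset M$. Since $P^{\varle\mu}(\lambda)$ is projective in $\CO_\Lambda^{\varle\mu}$ and projects naturally onto $\Delta(\lambda)$, hence onto $X$, projectivity furnishes a lift $\phi\colon P^{\varle\mu}(\lambda)\to M$ of this surjection. The submodule $Y:=\phi(P^{\varle\mu}(\lambda))\subset M$ then surjects onto $X$ by construction, so the whole problem reduces to showing that $\phi$ vanishes on the Verma submodule $\Delta(\mu)\subset P^{\varle\mu}(\lambda)$: once this is known, $\phi$ descends to a map out of $\Delta(\lambda)=P^{\varle\mu}(\lambda)/\Delta(\mu)$ and $Y$ is recognized as a quotient of $\Delta(\lambda)$, hence a module of highest weight $\lambda$.

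For the vanishing, my plan is as follows. Because $C$ acts by $c_\Lambda$ on both Verma subquotients of $P^{\varle\mu}(\lambda)$, the endomorphism $C-c_\Lambda\id$ of $P^{\varle\mu}(\lambda)$ has image contained in $\Delta(\mu)$ and kernel containing $\Delta(\mu)$; by the previous lemma, its image $N$ is a \emph{nonzero} submodule of $\Delta(\mu)$. Naturality of $C$ and the hypothesis $(C-c_\Lambda\id)|_M=0$ then give $\phi(N)\subset (C-c_\Lambda\id)(M)=0$. On the other hand, $X_\mu=0$ implies that $M_\mu$ is one-dimensional and coincides with the $\mu$-weight space of $\Delta(\mu)\subset M$, so any nonzero morphism $\Delta(\mu)\to M$ is a scalar multiple of the given inclusion and is therefore injective. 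The existence of the nonzero $N\subset\ker(\phi|_{\Delta(\mu)})$ rules out the injective case and forces $\phi|_{\Delta(\mu)}=0$, which completes the argument.

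The main obstacle is precisely this last point: without extra input, a lift $\phi$ could perfectly well send $\Delta(\mu)\subset P^{\varle\mu}(\lambda)$ isomorphically onto $\Delta(\mu)\subset M$. The preceding Casimir lemma is exactly what prevents this, because it produces the nonzero submodule $N\subset\Delta(\mu)$ whose image under any $C$-equivariant map into $M$ must vanish. Everything else in the proof is a standard projectivity and weight-space manipulation.
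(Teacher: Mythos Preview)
Your argument is correct and follows essentially the same route as the paper: lift the surjection $\Delta(\lambda)\twoheadrightarrow X$ to a map $\phi\colon P^{\varle\mu}(\lambda)\to M$, then use the previous lemma to produce a nonzero submodule $N=(C-c_\Lambda\id)P^{\varle\mu}(\lambda)\subset\Delta(\mu)$ annihilated by $\phi$, forcing $\phi|_{\Delta(\mu)}=0$. The only cosmetic difference is that the paper phrases the last step via the restricted map $f'\colon\Delta(\mu)\to\Delta(\mu)$ and the fact that a Verma endomorphism is zero or injective, whereas you argue directly from $\dim M_\mu=1$; these are equivalent observations.
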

\begin{proof} Since $X$ is a module of highest weight $\lambda$ and since the weights of $M$ are smaller or equal to $\mu$, there is a map $f\colon P^{\varle\mu}(\lambda)\to M$ such that the composition $P^{\varle\mu}(\lambda)\to M\to X$ is surjective. There is a commutative diagram 

\centerline{
\xymatrix{
0 \ar[r] & \Delta(\mu) \ar[r]\ar[d]^{f^\prime} & P^{\varle\mu}(\lambda) \ar[r] \ar[d]^f& \Delta(\lambda) \ar[r]\ar@{>>}[d] &0\\
0 \ar[r] & \Delta(\mu) \ar[r] & M \ar[r] & X \ar[r] &0
}
}
\noindent
with exact rows. In order to prove the lemma it is enough to show that $\Delta(\mu)\subset P^{\varle\mu}(\lambda)$ is in the kernel of $f$, i.e.~ that the left vertical map $f^\prime$  is zero. Since any endomorphism of a Verma module is either zero or injective, it suffices to show that $f^\prime$ is not injective.

By assumption, the Casimir element $C$ acts on $M$ by a scalar, which
has to be $c_\Lambda$.  So $C-c_\Lambda\id$ acts on $M$ by zero, so
$(C-c_\Lambda\id)P^{\varle\mu}(\lambda)$ is in the kernel of $f$. Since
$C-c_\Lambda\id$ acts by zero on $\Delta(\lambda)$, we have
$(C-c_\Lambda\id)P^{\varle\mu}(\lambda)\subset \Delta(\mu)\subset
P^{\varle\mu}(\lambda)$. By the previous lemma,
$(C-c_\Lambda\id)P^{\varle\mu}(\lambda)$ is non-zero, so the kernel of $f^\prime$ is not trivial, hence $f^\prime$ is not injective, hence it must be zero, which is what we wanted to show. 
\end{proof}

\subsection{The action of $\CA$ on projective objects}

Let us fix now a critical equivalence class $\Lambda\subset\hfhd$.
Let $\lambda\in \Lambda$ and  $n\ge  0$. In this section we study the
action of $\CA_n$ on
$P^{\varle\lambda}(\lambda-n\delta)$, i.e.~  we want to study  the map
$$
\CA_n\to \Hom(T^n P^{\varle\lambda}(\lambda-n\delta), P^{\varle\lambda}(\lambda-n\delta)).
$$

For the ease of notation let us fix an identification
$T^nP^{\varle\lambda}(\lambda-n\delta)\cong P^{\varle\lambda+n\delta}(\lambda)$.
 Each map $f\colon P^{\varle\lambda+n\delta}(\lambda)\to
 P^{\varle\lambda}(\lambda-n\delta)$ factors over the map
 $P^{\varle\lambda+n\delta}(\lambda)\to
 P^{\varle\lambda}(\lambda)$. The latter module is isomorphic to
 $\Delta(\lambda)$, hence the image of the map $f$ must be contained
 in $P(\lambda-n\delta)_{[\lambda]}\subset P^{\varle\lambda}(\lambda-n\delta)$. So the map $f$ induces a unique
 map $f^\prime\colon P^{\varle\lambda}(\lambda)\to
P(\lambda-n\delta)_{[\lambda]}$ such that the following diagram
 commutes:

\centerline{
\xymatrix{
T^n P^{\varle\lambda}(\lambda-n\delta)\ar[d]\ar[r]^{f}&P^{\varle\lambda}(\lambda-n\delta)\\
P^{\varle\lambda}(\lambda)\ar[r]^{f^\prime}&P(\lambda-n\delta)_{[\lambda]}.\ar[u]
}
}

The next proposition  is one of the principal technical ingredients in the proof of our main Theorem.
\begin{proposition}\label{prop-actonproj}
Suppose that $n\ge 0$ is such that $[\Delta(\lambda):L(\lambda-n\delta)]=p(n)$. Then the action map
\begin{align*}
\CA_n&\to \Hom(T^nP^{\varle\lambda}(\lambda-n\delta),
P^{\varle\lambda}(\lambda-n\delta))\\
&\cong\Hom(P^{\varle\lambda}(\lambda), P(\lambda-n\delta)_{[\lambda]})
\end{align*}
 is surjective. 
\end{proposition}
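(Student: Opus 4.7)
The plan is to pass from $\CA_n$ to the Feigin--Frenkel centre $\CZ$, reduce the surjectivity to a statement about the $\CZ_n$-action on a single generating vector of $P:=P^{\varle\lambda}(\lambda-n\delta)$, and then argue by a dimension-and-independence count.

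First, I would verify the stated isomorphism $\Hom(T^nP,P)\cong\Hom(P^{\varle\lambda}(\lambda),P(\lambda-n\delta)_{[\lambda]})$. By Lemma \ref{lemma-propT} one has $T^nP\cong P^{\varle\lambda+n\delta}(\lambda)$. Any $\hfg$-homomorphism into $P$ (supported on weights $\varle\lambda$) factors through the truncation $P^{\varle\lambda+n\delta}(\lambda)\twoheadrightarrow P^{\varle\lambda}(\lambda)=\Delta(\lambda)$, and its image lies automatically in $P(\lambda-n\delta)_{[\lambda]}$ because $\lambda$ is the maximal weight of $P$. The hypothesis $[\Delta(\lambda):L(\lambda-n\delta)]=p(n)$, together with BGG reciprocity and the identity $[\nabla(\lambda):L(\mu)]=[\Delta(\lambda):L(\mu)]$, gives $[P:\Delta(\lambda)]=p(n)$, and Lemma \ref{lemma-Vermasubquots} then yields $P(\lambda-n\delta)_{[\lambda]}\cong\Delta(\lambda)^{\oplus p(n)}$. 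Both Hom spaces are therefore $p(n)$-dimensional, and since $\lambda$ is maximal in $P$, evaluation at $v_\lambda\in\Delta(\lambda)$ identifies them with the $p(n)$-dimensional weight space $P_\lambda$.

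Next, I would use the natural map $\CZ_n\to\CA_n$ of Section \ref{sec-FFcenter} to pass to the Feigin--Frenkel centre. Unwinding its construction, for $z\in\CZ_n$ the induced morphism $\Delta(\lambda)\to P(\lambda-n\delta)_{[\lambda]}$ is determined by $v_\lambda\mapsto z\cdot v_P$, where $v_P$ is a generator of $P$ of weight $\lambda-n\delta$; since $\CZ$ commutes with $\hfg$, the vector $z\cdot v_P$ is automatically primitive of weight $\lambda$. Under the above identification the problem reduces to a single statement: the evaluation map $\CZ_n\to P_\lambda$, $z\mapsto z\cdot v_P$, is surjective.

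Finally, I would establish this surjectivity by exploiting the polynomial structure $\CZ=\DC[p^{(i)}_s\,;\,i=1,\dots,\ell,\,s\in\DZ]$ together with Theorem \ref{th:FF-freeness}. The monomials $p^{(i_1)}_{s_1}\cdots p^{(i_r)}_{s_r}$ in $\CZ_n$ whose factors all satisfy $s_j>0$ are indexed by $\ell$-coloured partitions of $n$, so there are exactly $p(n)$ of them, matching the dimension of the target $P_\lambda$. Although each such monomial annihilates every individual Verma subquotient of $P$ (positive modes kill highest-weight vectors), their combined action on $v_P$ is non-trivial, registering the non-splitness of the Verma-flag extensions in $P$; a comparison with the action of $\RZm$ on the Verma quotient $\Delta(\lambda-n\delta)$, which by Theorem \ref{th:FF-freeness} is free of rank $p(n)$ in weight $\lambda-2n\delta$, produces the required $p(n)$ linearly independent vectors in $P_\lambda$. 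The main technical obstacle is precisely this last step: one must verify that the $p(n)$ positive-mode monomials actually separate the $p(n)$ copies of $\Delta(\lambda)$ in the Verma flag of $P$. The hypothesis $[\Delta(\lambda):L(\lambda-n\delta)]=p(n)$ enters here as the essential non-degeneracy assumption, asserting that the flag of $P$ exhibits exactly the number of $\Delta(\lambda)$-subquotients predicted by the polynomial structure of $\CZ$, so that no unexpected collapses among these $\CZ_n$-actions can occur.
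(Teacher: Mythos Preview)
Your first two steps are fine and match the paper: the identification of the Hom-spaces and the dimension count $\dim\Hom(P^{\varle\lambda}(\lambda),P(\lambda-n\delta)_{[\lambda]})=p(n)$ via BGG-reciprocity are exactly what the paper does. The reduction to showing that the image of $\CA_n$ (or $\CZ_n$) has dimension $p(n)$ is also the right move.

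The gap is precisely where you flag it. Asserting that the $p(n)$ positive-mode monomials $p^{(i_1)}_{s_1}\cdots p^{(i_r)}_{s_r}$ (all $s_j>0$) applied to $v_P$ are linearly independent in $P_\lambda$ is the whole content of the proposition, and your justification is not a proof. Each such monomial kills every Verma subquotient of $P$, so its action on $v_P$ depends entirely on the extension data of the Verma flag in $P$; nothing you wrote controls that extension data. The ``comparison with the action of $\RZm$ on $\Delta(\lambda-n\delta)$'' goes in the wrong direction (it concerns negative modes acting downward inside a single Verma module, not positive modes climbing through a flag), and the final sentence about the hypothesis ensuring ``no unexpected collapses'' simply restates what you want to prove.

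The paper avoids this difficulty by a duality trick you do not mention. It defines an involution $\dual\colon\CA_n\to\CA_{-n}$ by $(\dual z)^M:=T^{-n}\bigl((z^{M^\star})^\star\bigr)$, so that by construction $(\dual z)^{\Delta(\lambda)}\ne 0$ iff $z^{\nabla(\lambda)}\ne 0$. It then proves, by a diagram chase using the projectivity of $P^{\varle\lambda}(\lambda-n\delta)$ and Lemma~\ref{lemma-HomdualVerma}, that $z^{\nabla(\lambda)}\ne 0$ iff $z^{P^{\varle\lambda}(\lambda-n\delta)}\ne 0$. Thus the kernel of $\CA_n\to\Hom(T^nP,P)$ coincides (via $\dual$) with the kernel of $\CA_{-n}\to\Hom(\Delta(\lambda-n\delta),\Delta(\lambda))$, and the latter has $p(n)$-dimensional image by Theorem~\ref{theorem-ResVerma}. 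This replaces your uncontrolled analysis of positive modes on the projective by the already-established freeness of negative modes on Verma modules, transported through $(\cdot)^\star$.
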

\begin{proof} Note that $P(\lambda)^{\varle\lambda}\cong\Delta(\lambda)$
  and that $P(\lambda-n\delta)_{[\lambda]}$ is isomorphic to a direct
  sum of $(P(\lambda-n\delta)_{[\lambda]}:\Delta(\lambda))$-copies of
  $\Delta(\lambda)$. By our assumption and the BGG-reciprocity, this number is $p(n)$. Hence the spaces on the right hand side of our map are of dimension $p(n)$. 

Now the following Lemma \ref{lemma-actonVerma}  shows that the image of the action map
$\CA_n\to \Hom(T^nP^{\varle\lambda}(\lambda-n\delta),
P^{\varle\lambda}(\lambda-n\delta))$ is of dimension $p(n)$. From this we deduce  our
claim.
\end{proof}

\subsection{A duality on $\CA$}

Let $\Lambda\subset\hfhd$ again be a critical equivalence class. 
In this section we define an algebra involution
$$
\dual\colon\CA\to\CA
$$
which maps $\CA_n$ into $\CA_{-n}$. 

Fix $n\in\DZ$ and choose $z\in\CA_n$. We define $\dual z\in\CA_{-n}$
as follows.  Let $M\in \CO^f_\Lambda:=\CO^f\cap\CO_\Lambda$ and let $M^\star\in \CO_\Lambda$
be its restricted dual. Then $z$ defines a homomorphism
$z^{M^\star}\colon T^n M^\star\to M^\star$. The dual of this map is a
homomorphism $\left( z^{M^\star}\right)^\star\colon M\to T^n M$. 

\begin{definition} For $z\in\CA_n$ and $M\in\CO^f_\Lambda$ define the
  map 
$$
(\dual z)^M:= T^{-n}\left( z^{M^\star}\right)^\star\colon T^{-n} M\to M.
$$
\end{definition}
One immediately checks that we get a natural transformation 
$\dual z\colon T^{-n}|_{\CO_{\Lambda}^f}\to \id_{\CO_{\Lambda}^f}$. As $\CO_\Lambda$ is filtered by the truncated categories, and as each indecomposable projective object in a truncated category is also contained in $\CO_{\Lambda}^f$, this induces a natural transformation $\dual z\colon T^n\to \id$ between the functors on the whole block $\CO_\Lambda$, hence  an element in $\CA_{-n}$. 

Now we prove the statement that remained open in the proof of Proposition \ref{prop-actonproj}. Fix $\lambda\in\Lambda$ and $n\ge 0$.

\begin{lemma}\label{lemma-actonVerma} For $z\in\CA_n$  the following holds:
$$
(\dual z)^{\Delta(\lambda)}\ne 0 \text{ if and only if } z^{P^{\varle\lambda}(\lambda-n\delta)}\ne 0.
$$
In particular, the image of the map
$\CA_n\to\Hom(T^nP^{\varle\lambda}(\lambda-n\delta),P^{\varle\lambda}(\lambda-n\delta))$
is of dimension $p(n)$.
\end{lemma}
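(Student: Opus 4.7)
The plan is to first establish the equivalence $z^P \ne 0$ if and only if $z^{\nabla(\lambda)} \ne 0$, where $P := P^{\varle\lambda}(\lambda - n\delta)$, and then deduce the dimension assertion. Note that by the very construction of $\dual$ the map $(\dual z)^{\Delta(\lambda)}$ is obtained from $z^{\nabla(\lambda)}$ by dualization followed by $T^{-n}$, both of which are linear isomorphisms on the relevant Hom-spaces, so $(\dual z)^{\Delta(\lambda)} \ne 0$ is equivalent to $z^{\nabla(\lambda)} \ne 0$. Granting the equivalence, the ``in particular'' assertion is immediate: the two maps $\CA_n \to \Hom(T^n P, P)$, $z \mapsto z^P$, and $\CA_n \xrightarrow{\dual} \CA_{-n} \to \Hom(T^{-n}\Delta(\lambda), \Delta(\lambda))$, $z \mapsto (\dual z)^{\Delta(\lambda)}$, share the same kernel, and by Theorem~\ref{theorem-ResVerma}(1) and (2) together with bijectivity of $\dual$ the second map is surjective onto a $p(n)$-dimensional target.

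For the equivalence itself I would reduce everything to a short list of morphisms. Set $k = [\Delta(\lambda): L(\lambda - n\delta)]$. By Lemma~\ref{lemma-HomdualVerma}, restricting to $P_{[\lambda]} \cong \Delta(\lambda)^{\oplus k}$ identifies $\Hom(P, \nabla(\lambda))$ with $\Hom(\Delta(\lambda), \nabla(\lambda))^{\oplus k} = \DC^k$, so I fix a corresponding basis $\{\psi_1, \dots, \psi_k\}$; since $T^n$ is an auto-equivalence of $\CO_\Lambda$, $\{T^n\psi_i\}$ is a basis of $\Hom(T^n P, \nabla(\lambda+n\delta))$. By the BGG reciprocity formula, $\dim\Hom(T^n P, \nabla(\lambda)) = [\nabla(\lambda):L(\lambda)] = 1$, and a generator is provided by the canonical composition $\eta\colon T^n P \cong P^{\varle\lambda+n\delta}(\lambda) \twoheadrightarrow P^{\varle\lambda}(\lambda) = \Delta(\lambda) \hookrightarrow \nabla(\lambda)$. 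Finally the image of $z^P$ lies in the submodule $P_{[\lambda]}$ (it is generated by the image of a weight-$\lambda$ vector), so $z^P$ factors as $T^n P \twoheadrightarrow \Delta(\lambda) \to \Delta(\lambda)^{\oplus k} = P_{[\lambda]} \hookrightarrow P$ and is encoded by scalars $(a_1,\dots,a_k) \in \DC^k$. Naturality of $z$ then delivers the key identity $a_i\eta = \psi_i \circ z^P = z^{\nabla(\lambda)} \circ T^n\psi_i$ for each $i$.

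The forward direction of the equivalence is immediate from this identity: if $z^P \ne 0$, then some $a_i \ne 0$, so $z^{\nabla(\lambda)} \circ T^n\psi_i = a_i\eta \ne 0$ and $z^{\nabla(\lambda)} \ne 0$. The main obstacle is the converse. Assume $z^{\nabla(\lambda)} \ne 0$, and let $N \subset \nabla(\lambda)$ be its nonzero image. Since $\nabla(\lambda)$ has essential simple socle $L(\lambda)$ we have $L(\lambda) \subset N$, so $[N:L(\lambda)] \ge 1$. All of $\nabla(\lambda+n\delta)$, $N$, and $\nabla(\lambda)$ lie in the truncated category $\CO^{\varle\lambda+n\delta}_\Lambda$, in which $T^n P \cong P^{\varle\lambda+n\delta}(\lambda)$ is the projective cover of $L(\lambda)$; projectivity of $T^n P$ makes $\Hom(T^n P, -)$ exact, so the surjection $\nabla(\lambda+n\delta) \twoheadrightarrow N$ produced by $z^{\nabla(\lambda)}$ lifts to some $h \in \Hom(T^n P, \nabla(\lambda+n\delta))$ with $z^{\nabla(\lambda)} \circ h \ne 0$. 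Writing $h = \sum_i c_i T^n\psi_i$ and substituting the identity above gives $z^{\nabla(\lambda)}\circ h = \bigl(\sum_i c_i a_i\bigr)\eta$, which therefore forces some $a_i \ne 0$ and hence $z^P \ne 0$.
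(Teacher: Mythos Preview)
Your proof is correct and follows essentially the same route as the paper: both arguments reduce to the naturality square $\psi\circ z^{P}=z^{\nabla(\lambda)}\circ T^n\psi$ for $\psi\in\Hom(P,\nabla(\lambda))$, use Lemma~\ref{lemma-HomdualVerma} to identify this Hom-space with $\Hom(P_{[\lambda]},\nabla(\lambda))$, and exploit projectivity of $T^nP$ in $\CO^{\varle\lambda+n\delta}$ for the backward direction. The only difference is presentational---you fix an explicit basis $\{\psi_i\}$ and coordinates $(a_i)$ where the paper works diagrammatically with a single well-chosen $g$---but the content is the same.
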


\begin{proof} By the definition of the duality we have 
$$
(\dual z)^{\Delta(\lambda)}\ne 0 \text{ if and only if } z^{\nabla(\lambda)}\ne 0.
$$

For each homomorphism $g\colon P^{\varle\lambda}(\lambda-n\delta)\to \nabla(\lambda)$ there is a commutative diagram

\centerline{
\xymatrix{
T^n 
P^{\varle\lambda}(\lambda-n\delta)\ar[d]_{T^ng}\ar[rrr]^{z^{P^{\varle\lambda}(\lambda-n\delta)}}&&&
P^{\varle\lambda}(\lambda-n\delta)\ar[d]^{g}\\
T^n\nabla(\lambda)\ar[rrr]^{z^{\nabla(\lambda)}}&&&
\nabla(\lambda).
}
}
The strategy of the proof is the following. Suppose that
$z^{P^{\varle\lambda}(\lambda-n\delta)}\ne 0$. We show that there is
a map $g$ such that the top right composition in the diagram above is
non-zero. From this we deduce that $z^{\nabla(\lambda)}\ne 0$. We show
that $z^{\nabla(\lambda)}\ne 0$ implies
$z^{P^{\varle\lambda}(\lambda-n\delta)}\ne 0$ in a similar way.  

So suppose that $z^{P^{\varle \lambda}(\lambda-n\delta)}\ne 0$. We
have already seen that there is a unique map
$b\colon P^{\varle\lambda}(\lambda)\to P(\lambda-n\delta)_{[\lambda]}$ such
that the following diagram commutes:

\centerline{
\xymatrix{
T^n P^{\varle\lambda}(\lambda-n\delta)\ar[d]\ar[rr]^{z^{P^{\varle\lambda}(\lambda-n\delta)}}&&P^{\varle\lambda}(\lambda-n\delta)\\
P^{\varle\lambda}(\lambda)\ar[rr]^b&&P(\lambda-n\delta)_{[\lambda]}.\ar[u]
}
}
\noindent
Now
$P^{\varle\lambda}(\lambda)$ is isomorphic to $\Delta(\lambda)$ and
$P(\lambda-n\delta)_{[\lambda]}$ is a direct sum of various
copies of $\Delta(\lambda)$. Hence  we can
find a map $g^\prime\colon P(\lambda-n\delta)_{[\lambda]}\to \nabla(\lambda)$
such that the composition $P^{{\varle\lambda}}(\lambda)\stackrel{b}\to
P(\lambda-n\delta)_{[\lambda]}\stackrel{g^\prime}\to
\nabla(\lambda)$ is non-zero. By 
Lemma \ref{lemma-HomdualVerma}, the map $g^\prime$ admits a lift
$g\colon
P^{\varle\lambda}(\lambda-n\delta)\to
\nabla(\lambda)$. Diagrammatically, the situation now looks as follows:

\centerline{
\xymatrix{
T^n P^{\varle\lambda}(\lambda-n\delta))\ar[d]\ar[rr]^{z^{P^{\varle\lambda}(\lambda-n\delta)}}&&P^{\varle\lambda}(\lambda-n\delta)\ar[dr]^{g}&\\
P^{\varle\lambda}(\lambda)\ar[rr]^b&&P(\lambda-n\delta)_{[\lambda]}\ar[u]\ar[r]^{g^\prime}&\nabla(\lambda).
}
}

If we plug in the map $g$ that we just obtained in the first diagram above, then
top right composition is non-zero, hence so is the bottom left composition. In particular, 
$z^{\nabla(\lambda)}\ne 0$. This was the first part of the proof.  

Now suppose that $z^{\nabla(\lambda)}\ne 0$. Then the image of $z^{\nabla(\lambda)}$ contains the unique simple submodule $L(\lambda)$ of
$\nabla(\lambda)$. Since
$T^n P^{\varle\lambda}(\lambda-n\delta)\cong
P^{{\varle\lambda}+n\delta}(\lambda)$ is a projective cover of
$L(\lambda)$ in $\CO^{{\varle\lambda}+n\delta}$, and since
$T^n \nabla(\lambda)\cong \nabla(\lambda+n\delta)$ is contained in the latter category, we
can find a map $g^\prime\colon T^nP^{\varle\lambda}(\lambda-n\delta)\to
T^n \nabla(\lambda)$ such that
$z^{\nabla(\lambda)}\circ g^\prime$ is non-zero. For  $g:=T^{-n}g^\prime$, the bottom left composition in the first diagram  in this proof   is
non-zero, hence so is the top right composition. In particular, $z^{P^{\varle\lambda}(\lambda-n\delta)}\ne 0$. 

The last statement of the lemma follows from the previous result and
Theorem \ref{theorem-ResVerma}.
\end{proof}

\subsection{A variant for the subgeneric cases}

We will also need the following variant of Proposition \ref{prop-actonproj} in the case that $\Lambda$ is
critical and subgeneric. Suppose that $\alpha$ is the positive finite root with $\ol\Lambda=\{\ol\lambda,s_\alpha.\ol\lambda\}$.  Fix $\lambda\in\Lambda$ and $n\ge 0$. We study the
action of $\CA_n$ on the projective cover
$P^{\varle\alpha\uparrow\lambda}(\lambda-n\delta)$, i.e.~we now consider the map
$$
\CA_n\to \Hom(T^nP^{\varle\alpha\uparrow\lambda}(\lambda-n\delta),
P^{\varle\alpha\uparrow\lambda}(\lambda-n\delta)).
$$
Again, let us fix an isomorphism
$T^nP^{\varle\alpha\uparrow\lambda}(\lambda-n\delta)\cong
P^{\varle\alpha\uparrow\lambda+n\delta}(\lambda)$. 
As before we see that each map $f\colon
P^{\varle\alpha\uparrow\lambda+n\delta}(\lambda)\to 
P^{\varle\alpha\uparrow\lambda}(\lambda-n\delta)$ induces a map
$f^\prime\colon P^{\varle\alpha\uparrow\lambda}(\lambda)\to P(\lambda-n\delta)_{[\lambda,\alpha\uparrow\lambda]}
$
such that the following diagram commutes:

\centerline{
\xymatrix{
P^{{\varle\alpha\uparrow\lambda+n\delta}}(\lambda)\ar[d]\ar[r]^{f}&P^{{\varle\alpha\uparrow\lambda}}(\lambda-n\delta)\\
 P^{\varle\alpha\uparrow\lambda}(\lambda)\ar[r]^{f^\prime}& P(\lambda-n\delta)_{[\lambda,\alpha\uparrow\lambda]}.\ar[u]
}
}
\noindent
Note that $P^{\varle\alpha\uparrow\lambda}(\lambda)$ is a non-split
extension of the Verma modules $\Delta(\lambda)$ and
$\Delta(\alpha\uparrow\lambda)$, as $\{\lambda,\alpha\uparrow\lambda\}\subset\Lambda$ is an atom. The module
$P(\lambda-n\delta)_{[\lambda,\alpha\uparrow\lambda]}$ is an extension
of $[\Delta(\lambda):L(\lambda-n\delta)]$ many copies of
$\Delta(\lambda)$ and 
$[\Delta(\alpha\uparrow\lambda):L(\lambda-n\delta)]$ many copies of
$\Delta(\alpha\uparrow\lambda)$. 

Let us assume that
$[\Delta(\alpha\uparrow\lambda):L(\lambda-n\delta)]=[\Delta(\lambda):L(\lambda-n\delta)]=p(n)$.
Then $P(\lambda-n\delta)_{[\lambda,\alpha\uparrow\lambda]}$ is a
direct sum of $p(n)$ non-split extensions of the Verma modules
$\Delta(\lambda)$ and $\Delta(\alpha\uparrow\lambda)$ (the extensions
are non-split by projectivity).

Let us consider the composition 
\begin{align*}
\CA_n&\to \Hom(T^nP^{\varle\alpha\uparrow\lambda}(\lambda-n\delta),
P^{\varle\alpha\uparrow\lambda}(\lambda-n\delta))\\
&\to
\Hom(P(\lambda)_{[\alpha\uparrow\lambda]},
P(\lambda-n\delta)_{[\alpha\uparrow\lambda]}),
\end{align*}
where the last map is induced by the functor
$(\cdot)_{[\alpha\uparrow\lambda]}$.

\begin{proposition}\label{prop-actonprojsub}
Suppose that
$[\Delta(\alpha\uparrow\lambda):L(\lambda-n\delta)]=[\Delta(\lambda):L(\lambda-n\delta)]=p(n)$.
Then the composition $\CA_n\to
\Hom(P(\lambda)_{[\alpha\uparrow\lambda]},
P(\lambda-n\delta)_{[\alpha\uparrow\lambda]})$ constructed above is
surjective.
\end{proposition}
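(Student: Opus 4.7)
The plan is to follow the blueprint of Proposition~\ref{prop-actonproj}, adapting the argument to the subquotient $(\cdot)_{[\alpha\uparrow\lambda]}$ and using Lemma~\ref{lemma-HomdualVerma} at the maximal weight $\alpha\uparrow\lambda$. Under the standing hypothesis, BGG-reciprocity gives $P(\lambda-n\delta)_{[\alpha\uparrow\lambda]}\cong \Delta(\alpha\uparrow\lambda)^{p(n)}$ while the atom structure and the extension describing $P^{\varle\alpha\uparrow\lambda}(\lambda)$ give $P(\lambda)_{[\alpha\uparrow\lambda]}\cong \Delta(\alpha\uparrow\lambda)$. Hence the target $\Hom(P(\lambda)_{[\alpha\uparrow\lambda]}, P(\lambda-n\delta)_{[\alpha\uparrow\lambda]})$ is $p(n)$-dimensional, and the proposition reduces to showing that the image of $\CA_n$ inside it has dimension $p(n)$.

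The key step is an analogue of Lemma~\ref{lemma-actonVerma}: for $z\in \CA_n$ and $M=P^{\varle\alpha\uparrow\lambda}(\lambda-n\delta)$,
\[
(z^M)_{[\alpha\uparrow\lambda]}\ne 0 \quad\text{if and only if}\quad z^{\nabla(\alpha\uparrow\lambda)}\ne 0.
\]
The forward direction follows the proof of Lemma~\ref{lemma-actonVerma} verbatim: the maximality of $\alpha\uparrow\lambda$ in the support of $M$ lets Lemma~\ref{lemma-HomdualVerma} lift a functional detecting $(z^M)_{[\alpha\uparrow\lambda]}$ to a map $g\colon M\to\nabla(\alpha\uparrow\lambda)$, and naturality of $z$ in $g$ then delivers $z^{\nabla(\alpha\uparrow\lambda)}\ne 0$. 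For the converse, I exploit the subgeneric constraint on the submodule lattice of $\nabla(\alpha\uparrow\lambda)$, namely $0\subsetneq L(\alpha\uparrow\lambda)\subsetneq \nabla(\alpha\uparrow\lambda)$: a non-zero $z^{\nabla(\alpha\uparrow\lambda)}$ must be surjective, since the socle $L(\alpha\uparrow\lambda)$ cannot occur as a quotient of $T^n\nabla(\alpha\uparrow\lambda)=\nabla(\alpha\uparrow\lambda+n\delta)$ for $n>0$. Projectivity of $T^nM=P^{\varle\alpha\uparrow\lambda+n\delta}(\lambda)$ in $\CO^{\varle\alpha\uparrow\lambda+n\delta}$ then lifts the unique (up to scalar) non-zero map $h\colon T^nM\to\nabla(\alpha\uparrow\lambda)$ through $z^{\nabla(\alpha\uparrow\lambda)}$ to some $g'$, and setting $g=T^{-n}g'$ naturality gives $g\circ z^M=h$. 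Since the top of $T^nM$ is $L(\lambda)$ and not $L(\alpha\uparrow\lambda)$, $h$ is itself forced to be surjective; applying the right-exact functor $(\cdot)_{[\alpha\uparrow\lambda]}$ yields $h_{[\alpha\uparrow\lambda]}\ne 0$, and by functoriality $h_{[\alpha\uparrow\lambda]}=g_{[\alpha\uparrow\lambda]}\circ (z^M)_{[\alpha\uparrow\lambda]}$, forcing $(z^M)_{[\alpha\uparrow\lambda]}\ne 0$.

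With the equivalence established, surjectivity follows from Theorem~\ref{theorem-ResVerma}(1), (2) applied at $\alpha\uparrow\lambda$ with index $-n$: the map $\CA_{-n}\to \Hom(T^{-n}\Delta(\alpha\uparrow\lambda),\Delta(\alpha\uparrow\lambda))$ is surjective onto a $p(n)$-dimensional space, and combining with the duality isomorphism $\dual\colon \CA_n\xrightarrow{\sim}\CA_{-n}$ together with the identity $(\dual z)^{\Delta(\alpha\uparrow\lambda)}=T^{-n}(z^{\nabla(\alpha\uparrow\lambda)})^\star$ gives $(\dual z)^{\Delta(\alpha\uparrow\lambda)}\ne 0 \Leftrightarrow z^{\nabla(\alpha\uparrow\lambda)}\ne 0$. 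The variant equivalence then transports this $p(n)$-dimensional image to the image of the proposition's composition, completing the proof. The main obstacle in this plan is the converse direction of the variant equivalence: unlike in Lemma~\ref{lemma-actonVerma}, where the projective and the simple module whose non-vanishing one detects live at the same weight, here $T^nM$ covers $L(\lambda)$ while the subquotient of interest sits at $\alpha\uparrow\lambda$, so the argument relies essentially on both the subgeneric constraint on the submodule lattice of dual Vermas and the careful tracking of the unique non-zero map $h\colon T^nM\to\nabla(\alpha\uparrow\lambda)$ through the subquotient functor $(\cdot)_{[\alpha\uparrow\lambda]}$.
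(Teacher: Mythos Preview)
Your approach has a genuine gap in the converse direction of your variant equivalence. The claim that the submodule lattice of $\nabla(\alpha\uparrow\lambda)$ is $0\subsetneq L(\alpha\uparrow\lambda)\subsetneq \nabla(\alpha\uparrow\lambda)$ is false: at the critical level the dual Verma $\nabla(\alpha\uparrow\lambda)$ has infinitely many Jordan--H\"older factors (e.g.\ all the $L(\alpha\uparrow\lambda-m\delta)$ already appear with multiplicity $p(m)$, by the very results you are working towards), so its submodule lattice is far from a two-step chain. Your conclusion that a nonzero $z^{\nabla(\alpha\uparrow\lambda)}$ is surjective happens to be true, but for a different reason: its dual is a nonzero map between Verma modules, hence injective. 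More seriously, you never justify why the map $h\colon T^nM\to\nabla(\alpha\uparrow\lambda)$ is surjective, and your stated reason (``the top of $T^nM$ is $L(\lambda)$'') does not imply it. One can rescue the step by observing directly that $h$ factors through $P^{\varle\alpha\uparrow\lambda}(\lambda)$ and restricts on the submodule $\Delta(\alpha\uparrow\lambda)$ to the canonical nonzero map $\Delta(\alpha\uparrow\lambda)\to\nabla(\alpha\uparrow\lambda)$, which already gives $h_{[\alpha\uparrow\lambda]}\neq 0$ without any surjectivity of $h$; but as written your argument does not do this.

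The paper's proof avoids all of this by a much shorter route: it compares the two maps
\[
\CA_n\to\Hom\bigl(P(\lambda)_{[\alpha\uparrow\lambda]},P(\lambda-n\delta)_{[\alpha\uparrow\lambda]}\bigr)
\quad\text{and}\quad
\CA_n\to\Hom\bigl(P(\lambda)_{[\lambda]},P(\lambda-n\delta)_{[\lambda]}\bigr),
\]
both factoring through $\Hom(P(\lambda)_{[\lambda,\alpha\uparrow\lambda]},P(\lambda-n\delta)_{[\lambda,\alpha\uparrow\lambda]})$. The second is surjective by Proposition~\ref{prop-actonproj}. Since under the hypothesis $P(\lambda-n\delta)_{[\lambda,\alpha\uparrow\lambda]}$ is a direct sum of $p(n)$ \emph{non-split} extensions of $\Delta(\alpha\uparrow\lambda)$ by $\Delta(\lambda)$, any map from $P(\lambda)_{[\lambda,\alpha\uparrow\lambda]}$ killing the $\Delta(\alpha\uparrow\lambda)$-piece must factor through $\Delta(\lambda)$ and hence also vanish on the quotient $\Delta(\lambda)$-piece (by non-splitness). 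Thus the kernel of the upper map is contained in the kernel of the lower, and since the two targets have the same dimension $p(n)$, the upper map is surjective too. This bypasses dual Vermas at $\alpha\uparrow\lambda$ entirely and uses only Proposition~\ref{prop-actonproj} as input.
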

\begin{proof} Consider the homomorphisms

\centerline{
\xymatrix{ &  \Hom(P(\lambda)_{[\alpha\uparrow\lambda]},
P(\lambda-n\delta)_{[\alpha\uparrow\lambda]})\\
\CA_n \ar[r] & \Hom(P(\lambda)_{[\lambda,\alpha\uparrow\lambda]},
P(\lambda-n\delta)_{[\lambda,\alpha\uparrow\lambda]}) \ar[u]\ar[d] 
\\
&\Hom(P(\lambda)_{[\lambda]},
P(\lambda-n\delta)_{[\lambda]}).
}
}
By Proposition \ref{prop-actonproj}, the lower composition is
surjective. But the kernel of the upper  composition is contained in
the kernel of the lower composition, as $P(\lambda-n\delta)_{[\alpha\uparrow\lambda,\lambda]}$ is a direct sum of non-split extensions of $\Delta(\alpha\uparrow\lambda)$ and $\Delta(\lambda)$. Since the spaces on the top and the bottom 
share the same dimension, also the upper composition is surjective.
\end{proof}


\section{The BRST cohomology}\label{sec-BRST}
To prove Theorem \ref{theorem-MT} we need a result from \cite{Ara07},
which we explain below.
\subsection{The BRST cohomology associated with the 
quantized Drinfeld-Sokolov reduction}
Denote by
$\finn_-:=\bigoplus_{\alpha\in R^+}\fing_{-\alpha}$
the nilpotent subalgebra of $\fing$ corresponding to the set of negative
roots.
Let 
$\Psi$ be a non-degenerate character of $\finn_-$
in the sense of Kostant \cite{Kos78}, i.e.~ 
\begin{align*}
\Psi(x)=k(x,e_{prin})
\end{align*}
for some principal nilpotent element $e_{prin}$ of $\fing$ in $\finn_+
:=\bigoplus_{\alpha\in R^+}\fing_{\alpha}$.
We 
extend $\Psi$ to the character $\widehat{\Psi}$
of $\finn_-[t,t\inv]:=\finn_-\* \C[t,t\inv]\subset \affg$
by setting
\begin{align*}
\widehat{\Psi}(u\* t^n)=\Psi(u)\delta_{n,0}
\end{align*}
for $u\in \finn_-$, $n\in \Z$.
Let $\C_{\widehat{\Psi}}$ be the 
corresponding one-dimensional representation of $\finn_-[t,t\inv]$. 

Set
\begin{align*}
 H^i(M):=\BRST{i}{M}
\end{align*}
for $M\in \CO_{\crit}$ and $i\in \Z$.
Here,
$M\* \C_{\widehat{\Psi}}$ is considered as an
$\finn_-[t,t\inv]$-module by the tensor product action,
and
$\BRST{\bullet}{M}$
is the semi-infinite $\finn_-[t,t\inv]$-cohomology \cite{Feu84} with  
coefficients in  
$M\* \C_{\widehat{\Psi}}$.
The cohomology  $H^{\bullet}(M)$
is defined by the semi-infinite analogue of the 
Chevalley-Eilenberg complex: 
Let $\Cl$
be the unital superalgebra
generated by
odd elements  $\psi_{\alpha}(n)$
for $\alpha\in R$, $n\in \Z$,
with the relations
\begin{align*}
\psi_{\alpha}(m)\psi_{\beta}(n)+ \psi_{\beta}(n)\psi_{\alpha}(m)
=\delta_{m+n,0}\delta_{\alpha+\beta,0}.
\end{align*}
Let $\semiwedge{\bullet}$ be the irreducible
representation of $\Cl$ generated by the vector $\vac$
such that
\begin{align*}
 \psi_{\alpha}(n)\vac=0,\quad \text{if }\alpha+n\delta\in \widehat{R}^+.
\end{align*}
The space $\semiwedge{\bullet}$ is graded by {\em charge}:
$\semiwedge{\bullet}=\bigoplus_{i\in \Z}\semiwedge{i}$,
where
the charges of $\vac$, $\psi_{\alpha}(n)$
and $\psi_{-\alpha}(n)$
for $\alpha\in R^+$, $n\in \Z$,
are $0$,
$1$, and $-1$,
 respectively.
Also, we view $\semiwedge{\bullet}$
as an $\affh$-module
on which $h\in \affh$
acts as  $h\mathbf{1}=0$ and $[h,\psi_{\alpha}(n)]=
\bra \alpha+n\delta,h\ket \psi_{\alpha}(n)$.

Let
$M$ be an object of  $\CO$.
Set
\begin{align*}
 C^{\bullet}(M):=M\* \semiwedge{\bullet}=\bigoplus_{i\in \Z}C^i(M),
\quad C^i(M)=M\* \semiwedge{i}.
\end{align*}
Define an odd operator
$Q$ of charge $1$ on $C^{\bullet}(M)$
by 
\begin{align*}
 Q:=&\sum_{\alpha\in R^-\atop n\in \Z}
(x_{\alpha}\* t^{-n}+\widehat{\Psi}(x_{\alpha}\* t^{-n}))\*
 \psi_{\alpha}(n)
\\ &\quad-
 \frac{1}{2} \sum_{\alpha,
\beta,\gamma\in R^-\atop
k,l\in \Z}
c_{\alpha,\beta}^{\gamma}
\id_M\* \psi_{-\alpha}(-k)
\psi_{-\beta}(-l)\psi_{\gamma}(k+l),
\end{align*} 
where $x_{\alpha}$ is a (fixed) root vector in $\fing_{\alpha}$ for any $\alpha\in R^-$
and
 $[x_{\alpha},x_{\beta}]=
\sum_{\gamma}c_{\alpha,\beta}^{\gamma}x_{\gamma}$.
The operator $Q$ is well-defined because $M\in \CO$.
One has 
\begin{align*}
Q^2=0.
\end{align*}
Therefore, 
$(C^{\bullet}(M),Q)$ is a cochain complex.
The space $H^{\bullet}(M)$ is
by definition
 the 
cohomology of the complex $(C^{\bullet}(M),Q)$.

Set
$C^{\bullet}(M)_d:=\{c\in C^{\bullet}(M)\mid
(D\* 1+1\* D) c= dc \}$.
One has 
$C^{\bullet}(M)=\bigoplus_{d\in \C}C^{\bullet}(M)_d$.
Because the operator  $Q$
obviously preserves each subspace $C^{\bullet}(M)_d$,
$H^{\bullet}(M)$ is also graded by the diagonal action of $D$:
\begin{align*}
H^{\bullet}(M)=\bigoplus_{d\in \C}H^{\bullet}(M)_d.
\end{align*}

\subsection{The functor $F$}
Let $p$ be an element of $\FFC$.
For each $n\in \Z$,
the operator  $p_{(n)}\* 1$ 
commutes with  the action of $Q$ on $C^{\bullet}(M)$. 
Therefore, 
for each $i\in \Z$,
$H^{i}(M)$ is 
naturally a graded module
over  the commutative vertex algebra
$\FFC$,
and thus can be considered as a graded $\mathcal{Z}$-module.
Denote by $F$ the functor
\begin{align*}
\CO_{\crit}\ra  \mathcal{Z}\operatorname{-Mod},
\quad M\mapsto H^0(M),
\end{align*}
where $\mathcal{Z}\operatorname{-Mod}$ is the 
category of graded $\mathcal{Z}$-modules.

Set 
\begin{align*}
\cha_q F(M):=\sum_{d\in \C}q^{-d} \dim_{\C}F(M)_d
\end{align*}
for a finitely generated object $M$
of $\CO_{\crit}$,
where $F(M)_d=H^0(M)_d$.

\begin{theorem}[\cite{Ara07}]\label{Th:main-of-Ara}$ $

 \begin{enumerate}
  \item One has $H^i(M)=0$ for all $i\ne 0$
and $M\in \CO_{\crit}$.
In particular, the functor $F$ is exact.
\item 
Let  $\lam\in \affh^{\star}_{\crit}$.
One has the following.
\begin{align*}
&\cha_q F(\Delta(\lam))=q^{-\bra \lam,D\ket}\prod\limits_{j\geq 1}(1-q^j)^{-\rank\, \fing},
\\
 &\cha_q F(L(\lam))=\begin{cases}
		    q^{-\bra \lam,D\ket}&\text{if $\lam$ is
		    anti-dominant},\\
0&\text{otherwise.}
		   \end{cases}
\end{align*}
 \end{enumerate}
\end{theorem}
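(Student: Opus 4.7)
The plan is to prove parts (1) and (2) in tandem, treating the Verma module calculation first, then deducing exactness, then handling the simple modules.

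For $M = \Delta(\lam)$, the approach is a PBW-type spectral sequence. Using the triangular decomposition of $\affg$, the PBW theorem gives an isomorphism of $\finn_-[t,t^{-1}]$-modules
$$
\Delta(\lam) \cong U(\finn_-[t,t^{-1}]) \otimes U(V) \otimes \DC_\lam,
$$
where $V \subset \affg$ is a graded complement to $\finn_-[t,t^{-1}] \oplus \hfb$ inside $\affg$ (so $V$ is spanned by $\finh \otimes t^{-n}$ for $n \ge 1$ and by $\finn_+ \otimes t^{-n}$ for $n \ge 1$), and $\finn_-[t,t^{-1}]$ acts by left multiplication on the first factor. I would equip $C^\bullet(\Delta(\lam))$ with the increasing PBW filtration coming from the tensor factor $U(V)$; on the associated graded, the BRST differential $Q$ reduces to the classical Koszul differential for $\finn_-[t,t^{-1}]$ acting on a free module, twisted by $\widehat\Psi$. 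The non-degeneracy of $\Psi$ in the sense of Kostant (applied weight-space-by-weight-space) collapses this Koszul complex onto $U(V) \otimes \DC_\lam$ concentrated in charge $0$, and the spectral sequence degenerates. Computing $\cha_q$ of the survivor yields $q^{-\langle\lam,D\rangle}\cha_q U(\finh[t^{-1}]t^{-1}) = q^{-\langle\lam,D\rangle}\prod_{j\ge 1}(1-q^j)^{-\rank\fg}$, because the $\finn_+$-factors in $V$ contribute the same generating function as the $\psi_\alpha(-n)$ for $\alpha \in R^+$ in $\semiwedge{0}$, and they cancel after accounting for the Koszul grading.

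For part (1), vanishing on Verma modules is a byproduct of the calculation above (the surviving cohomology sits only in charge $0$). To extend this to all of $\CO_{\crit}$, I would work in a truncated category $\CO^\CJ_\Lambda$ with $\CJ$ open and bounded. Every object there has a finite resolution by direct sums of projective covers $P^\CJ(\lam)$, which admit Verma flags. Since $H^\bullet(-)$ is a cohomological functor compatible with short exact sequences, vanishing for Verma modules propagates first to Verma-flag modules, then via a hyperhomology spectral sequence to all objects of $\CO^\CJ_\Lambda$; taking unions over $\CJ$ covers $\CO_{\crit}$. Exactness of $F = H^0$ then follows from the long exact sequence.

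For the character of $F(L(\lam))$, I would use the now-established exactness and the identification of $F(\Delta(\lam))$ as a free rank-one module over $\CZ^-$ (compatible with Theorem \ref{th:FF-freeness}). Any submodule of $\Delta(\lam)$ maps under $F$ to a submodule of $F(\Delta(\lam))$, and $F(L(\lam))$ appears as a quotient. To produce a non-zero class in $H^0(L(\lam))$, one needs a Whittaker vector (a vector $v \in L(\lam)$ with $(x \otimes t^n + \widehat\Psi(x\otimes t^n))v = 0$ for all $x \in \finn_-$, $n \in \Z$). For anti-dominant $\lam$, the entire Verma module $\Delta(\lam)$ is simple or nearly so in a controlled way, and a Kostant-type construction produces the required generator, giving the one-dimensional cohomology concentrated at degree $-\langle\lam,D\rangle$. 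For non-anti-dominant $\lam$, one compares $L(\lam)$ with $\Delta(\lam)$ via the Jantzen / BGG structure and argues that all potential Whittaker vectors in $\Delta(\lam)$ lie in the maximal submodule, forcing $F(L(\lam)) = 0$.

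The main obstacle is the non-existence of Whittaker vectors in $L(\lam)$ for non-anti-dominant $\lam$. The Verma-level computation is essentially a faithful adaptation of Kostant's classical Whittaker argument, and the exactness step is formal once the Verma case is in hand. By contrast, showing $H^0(L(\lam)) = 0$ for \emph{every} non-anti-dominant $\lam$ requires a genuine affine analog of Kostant's theorem and a tight control of how composition factors of $\Delta(\lam)$ carry or destroy the Whittaker datum; this is where the critical-level peculiarities of $\CO_{\crit}$ (including the action of $\CA$ and the central characters) must enter decisively.
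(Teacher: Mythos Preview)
This theorem is cited from \cite{Ara07} and is not proved in the present paper, so there is no in-paper argument to compare your proposal against. Let me nonetheless point out where your outline breaks.

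The extension of vanishing from Verma modules to all of $\CO_{\crit}$ via projective resolutions is incomplete. Given a resolution $P_\bullet \to M$ with $H^i(P_j)=0$ for $i\ne 0$, the hypercohomology spectral sequence collapses to the row $q=0$ and identifies $H^\bullet(M)$ with the cohomology of the complex $H^0(P_\bullet)$; since that complex sits entirely in non-positive cohomological degrees you obtain $H^i(M)=0$ for $i>0$, but nothing for $i<0$. A finite resolution does not help: it bounds the range where cohomology can live but does not force acyclicity inside that range. (Nor is it clear that truncated critical blocks have finite global dimension.) One needs a separate mechanism for negative degrees---either the dual computation $H^i(\nabla(\lambda))=0$ combined with injective resolutions, or a filtration on $C^\bullet(M)$ that works uniformly for every $M\in\CO$. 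The argument in \cite{Ara07} takes the latter route, splitting the BRST complex along $\finn_-[t,t^{-1}]=\finn_-[t]\oplus t^{-1}\finn_-[t^{-1}]$ and using that the first summand acts locally nilpotently on any object of $\CO$.

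Your treatment of $F(L(\lambda))$ also rests on a false premise. At the critical level an anti-dominant $\Delta(\lambda)$ is never close to simple: by Theorem~\ref{th:FF-freeness} there are nonzero maps $\Delta(\lambda-n\delta)\to\Delta(\lambda)$ for every $n>0$, so the maximal submodule is large and a Whittaker vector in $\Delta(\lambda)$ need not survive in $L(\lambda)$. In \cite{Ara07} the computation of $F(L(\lambda))$ does not proceed by locating or excluding Whittaker vectors; one shows instead that $H^0(L(\lambda))$ is either zero or irreducible over the $W$-algebra (which at the critical level coincides with $\FFC$), and then reads off which alternative holds from the Euler characteristic once exactness is available.
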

\begin{Rem}
In general,
the correspondence $M\mapsto H^0(M)$ defines a functor 
from $\CO_k$ to the
category of graded modules over the 
$W$-algebra $\mathscr{W}^k(\fing)$
associated with $\fing$ at level $k$,
which coincides \cite{FeiFre92}
with $\FFC$ if the level $k$ is critical.
In \cite{Ara07},
it was proved that
 the functor $H^0(?)$ is exact 
 and 
$H^0(L(\lam))$
is zero or irreducible
for any $\lam$ at any level $k$.
\end{Rem}

\section{The proof of the main Theorem}\label{sec-proofofMT}

We have collected all the ingredients for the proof of our main
Theorem \ref{theorem-MT}. We start with claim (1). Let us state it again:
\begin{theorem} Let $\Lambda\subset \hfhd$ be a critical equivalence class and  suppose that $\nu\in\Lambda$ is anti-dominant. Then for all $w\in \CW(\Lambda)$ and $n\ge 0$ we have
$$
[\Delta(w.\nu):L(\nu-n\delta)]=p(n).
$$
\end{theorem}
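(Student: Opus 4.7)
The plan is to compute $\cha_q F(\Delta(w.\nu))$ in two different ways, where $F = H^0$ is the BRST cohomology functor of Section \ref{sec-BRST}. By Theorem \ref{Th:main-of-Ara}(2) directly,
$$
\cha_q F(\Delta(w.\nu)) = q^{-\langle w.\nu, D\rangle}\prod_{j \ge 1}(1-q^j)^{-\rank\fg}.
$$
Since $w \in \CW(\Lambda) \subset \CW$ lies in the finite Weyl group, its linear action on $\hfhd$ fixes both $\delta$ and $\Lambda_0$, so $\langle w.\nu, D\rangle = \langle \nu, D\rangle$. The right-hand side therefore equals $q^{-\langle \nu, D\rangle}\sum_{n \ge 0} p(n) q^n$.

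On the other hand, by the exactness of $F$ (Theorem \ref{Th:main-of-Ara}(1)) and the vanishing of $F(L(\mu))$ for $\mu$ not anti-dominant,
$$
\cha_q F(\Delta(w.\nu)) = \sum_{\mu \in \Lambda,\ \mu \text{ anti-dom.}} [\Delta(w.\nu) : L(\mu)]\, q^{-\langle \mu, D\rangle}.
$$
The decisive combinatorial claim is that the anti-dominant $\mu \in \Lambda$ satisfying $\mu \le w.\nu$ are exactly $\{\nu - n\delta : n \ge 0\}$. Using $\langle \lambda + \rho, K\rangle = 0$ at the critical level, each elementary Kac--Kazhdan step is either a real reflection $s_{\alpha + m\delta}.\lambda = s_\alpha.\lambda - m\langle\lambda+\rho,\alpha^\vee\rangle\delta \in \CW(\Lambda).\lambda + \DZ\delta$, or a pure translation by a multiple of $\delta$ coming from the imaginary root $\delta$; iterating, one obtains $\Lambda \subseteq \CW(\Lambda).\nu + \DZ\delta$. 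Writing $\mu = w'.\nu + k\delta$ with $w' \in \CW(\Lambda)$ and $k \in \DZ$, and noting $\langle\delta,\alpha^\vee\rangle = 0$ for finite roots, $\mu$ is anti-dominant if and only if $w'.\nu$ is. Since each $\CW(\Lambda)$-orbit meets the closure of the anti-dominant Weyl chamber in exactly one point and $\nu$ already lies there, this forces $w'.\nu = \nu$, so $\mu = \nu + k\delta$. Finally, $w.\nu - \mu = (w.\nu - \nu) - k\delta$ must lie in $\DZ_{\ge 0}\hR^+$; as $w.\nu - \nu \in \DZ_{\ge 0} R^+$ has zero $\delta$-component, we conclude $k \le 0$.

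Substituting $\mu = \nu - n\delta$ ($n \ge 0$) into the second formula and using $\langle \nu - n\delta, D\rangle = \langle \nu, D\rangle - n$, the right-hand side becomes $q^{-\langle \nu, D\rangle}\sum_{n \ge 0}[\Delta(w.\nu) : L(\nu - n\delta)]\, q^n$. Matching coefficients with the first computation gives $[\Delta(w.\nu) : L(\nu - n\delta)] = p(n)$ for all $n \ge 0$, as required. The main obstacle is the combinatorial step above; once the inclusion $\Lambda \subseteq \CW(\Lambda).\nu + \DZ\delta$ is established (this is the essential critical-level input), the uniqueness of the anti-dominant element in a finite Weyl orbit and the simple ordering inequality complete the identification, and the character comparison via $F$ does the rest.
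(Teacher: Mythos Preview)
Your proof is correct and follows essentially the same approach as the paper's: both compute $\cha_q F(\Delta(w.\nu))$ via Theorem~\ref{Th:main-of-Ara}, once directly and once through its Jordan--H\"older decomposition, and then compare coefficients. The only difference is that you spell out the combinatorial step (that the anti-dominant elements of $\Lambda$ are precisely the $\nu+r\delta$, $r\in\DZ$) in detail, whereas the paper simply asserts it; your additional argument that $k\le 0$ is not strictly needed, since it falls out automatically from $p(n)=0$ for $n<0$ when matching coefficients.
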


\begin{proof} By Theorem \ref{Th:main-of-Ara}, (1), the functor $F$
  is exact. Hence we have
\begin{align*}
 \cha_q F(\Delta(w. \nu)) &=\sum_{\gamma\in\Lambda}  [\Delta(w .\nu):
 L(\gamma)]
 \cha_q F(L(\gamma))\\
&=\sum_{\gamma\in\Lambda\atop \text{$\gamma$ anti-dominant}} [\Delta(w .\nu):
 L(\gamma)]
 \cha_q F(L(\gamma))
\end{align*}
Note that $\gamma\in\Lambda$ is anti-dominant if and only if $\gamma=\nu+r\delta$ for some $r\in\DZ$. Since $\bra w. \nu,D\ket=\bra \nu,D\ket$
for all $w\in \CW$ (as $\langle\alpha,D\rangle=0$ for all finite roots $\alpha$), the claim  now follows directly  from Theorem \ref{Th:main-of-Ara}, (2).
\end{proof}

It remains to prove part (2) of Theorem \ref{theorem-MT}. Let us
recall the statement:

\begin{theorem} \label{theorem-dommul} Let $\Lambda$ be a subgeneric critical equivalence class and let $\lambda\in \Lambda$ be dominant. Denote by $\alpha$ the positive finite root with $\{\ol\lambda,s_\alpha.\ol\lambda\}$.  Then we have,  for all $n\ge 0$,
$$
[\Delta(\lambda):L(\lambda-n\delta)]=[\Delta(\alpha\uparrow\lambda):L(\lambda-n\delta)]=p(n).
$$
\end{theorem}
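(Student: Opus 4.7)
The plan is to prove the two equalities $[\Delta(\lambda):L(\lambda-n\delta)] = [\Delta(\alpha\uparrow\lambda):L(\lambda-n\delta)] = p(n)$ by reducing them to the analogous statements for the restricted Verma modules $\rDelta(\lambda)$ and $\rDelta(\alpha\uparrow\lambda)$, and then analyzing those using the tools established in Sections~\ref{sec-FFcenter}, \ref{sec-Proj}, and~\ref{sec-BRST}.

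The reduction proceeds as follows. The Feigin--Frenkel freeness of $\Delta(\lambda)$ over $\RZm$ (Theorem~\ref{th:FF-freeness}) yields an $\hfg$-module filtration on $\Delta(\lambda)$ whose successive subquotients are $p(m)$ copies of $\rDelta(\lambda-m\delta)$ for each $m\ge 0$. Passing to composition multiplicities gives
$$
[\Delta(\lambda):L(\lambda-n\delta)] = \sum_{m=0}^n p(m)\, r_{n-m}, \qquad r_k := [\rDelta(\lambda):L(\lambda-k\delta)].
$$
Since $r_0 = 1$ is immediate ($L(\lambda)$ is the simple quotient of $\rDelta(\lambda)$), the first equality is equivalent to $r_k = 0$ for $k\ge 1$. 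A lower bound $[\Delta(\lambda):L(\lambda-n\delta)] \ge p(n)$ is obtained by exhibiting the $p(n)$-dimensional space $\Delta(\lambda)^{\hfn_+}_{\lambda-n\delta}$ (freeness again) and a Jordan--H\"older argument: each simple subquotient of type $L(\lambda-n\delta)$ can absorb at most one linearly independent highest weight vector. The matching upper bound then reduces to the same vanishing $r_k = 0$ for $k\ge 1$.

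To prove $r_k = 0$ for $k\ge 1$, I first apply the exact BRST functor $F$ to the short exact sequence $0\to\Delta(\lambda)^-\to\Delta(\lambda)\to\rDelta(\lambda)\to 0$. Using Theorem~\ref{Th:main-of-Ara} together with the $\rDelta(\lambda-m\delta)$-filtration of $\Delta(\lambda)^-$, a direct $q$-character computation yields $\cha_q F(\rDelta(\lambda)) = q^{-\bra\lambda,D\ket}$; combined with the vanishing of $F$ on non-antidominant simples, this pins down the antidominant composition factors of $\rDelta(\lambda)$, forcing $[\rDelta(\lambda):L(\alpha\downarrow\lambda)] = 1$ and excluding other antidominant simples in the block. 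For the dominant side, Theorem~\ref{theorem-ResVerma}(4) gives $\rDelta(\lambda)^{\hfn_+}_{\lambda-k\delta}=0$ for $k\ge 1$; but since highest weight vectors need not detect composition factors (as a dual Verma example shows), this must be combined with the graded-center analysis: apply Propositions~\ref{prop-actonproj} and~\ref{prop-actonprojsub} together with the Casimir Lemma~\ref{lemma-Casimir} to the projective modules $P^{\leq\alpha\uparrow\lambda}(\lambda-k\delta)$. An extra composition factor $L(\lambda-k\delta)$ of $\rDelta(\lambda)$ would produce a natural transformation in $\CA_k$ whose action on the projective has image of dimension larger than that predicted by Lemma~\ref{lemma-actonVerma}, yielding a contradiction.

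The second equality $[\Delta(\alpha\uparrow\lambda):L(\lambda-n\delta)] = p(n)$ follows by the analogous reduction to $\tilde r_k := [\rDelta(\alpha\uparrow\lambda):L(\lambda-k\delta)]$; here one uses that $\{\lambda,\alpha\uparrow\lambda\}$ is an atom (so that $P^{\leq\alpha\uparrow\lambda}(\lambda)$ is a non-split extension of $\Delta(\lambda)$ by $\Delta(\alpha\uparrow\lambda)$) to conclude $\tilde r_0 = 1$, and then the same Casimir/graded-center argument closes the upper bound. The main obstacle is the upper bound for the dominant composition factors: the BRST functor sees only the antidominant side, and Theorem~\ref{theorem-ResVerma}(4) does not by itself control composition multiplicities. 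Bridging this gap---the heart of the argument---relies on the interplay between the graded center $\CA$, the non-triviality of the Casimir operator on subgeneric projectives (proved via deformation in Section~\ref{sec-Proj}), and the explicit dimension count in Lemma~\ref{lemma-actonVerma}.
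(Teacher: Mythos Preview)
The reduction to showing $r_k := [\rDelta(\lambda):L(\lambda-k\delta)] = 0$ for $k \ge 1$ via the Feigin--Frenkel filtration is clean and correct, and the BRST computation $\cha_q F(\rDelta(\lambda)) = q^{-\langle\lambda,D\rangle}$ is right; this pins down the anti-dominant composition factors exactly as you say (and indeed recovers the relevant piece of part~(1) of the main theorem).

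The gap is in the dominant half. Your sentence ``An extra composition factor $L(\lambda-k\delta)$ of $\rDelta(\lambda)$ would produce a natural transformation in $\CA_k$ whose action on the projective has image of dimension larger than that predicted by Lemma~\ref{lemma-actonVerma}'' has the implication backwards. An extra composition factor enlarges the \emph{target} $\Hom(T^k P, P)$ via BGG reciprocity, not the image of $\CA_k \to \Hom(T^k P,P)$; Lemma~\ref{lemma-actonVerma} says that image always has dimension $p(k)$, regardless of the multiplicities. So no contradiction arises this way: you would only conclude that the action map fails to be surjective, which is consistent. Relatedly, Propositions~\ref{prop-actonproj} and~\ref{prop-actonprojsub} both carry the \emph{hypothesis} $[\Delta(\lambda):L(\lambda-n\delta)] = p(n)$ (respectively the two such equalities); you cannot invoke them to prove that very hypothesis without an inductive setup, and your outline has none.

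The paper's proof supplies exactly this missing induction. One works with the Weyl module $V(\lambda) = \Delta(\lambda)/\Delta(\alpha\downarrow\lambda)$ rather than $\rDelta(\lambda)$, and proves by induction on $n$ that every occurrence of $L(\lambda-n\delta)$ in $\Delta(\lambda)$ comes from a singular vector. The inductive hypothesis for $l < n$ is what makes Propositions~\ref{prop-actonproj} and~\ref{prop-actonprojsub} applicable, and the way they are used is the \emph{reverse} of what you wrote: because $\CA_r$ with $r > 0$ annihilates any highest weight module, naturality forces every map $P^{<\lambda}(\lambda-n\delta) \to X \subset V(\lambda)$ to kill the image of each $z^{P}$ for $z \in \CA_r$; the propositions then guarantee these images sweep out all the higher Verma subquotients of $P^{<\lambda}(\lambda-n\delta)$, so the map factors through $\Delta(\lambda-n\delta)$. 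The Casimir lemma enters separately, to lift singular vectors from $V(\lambda)$ back to $\Delta(\lambda)$, and Theorem~\ref{theorem-ResVerma}(4) is used in a contradiction argument for the piece $\Delta(\alpha\downarrow\lambda)$. Your list of ingredients is right, but the logical architecture---the induction, and the direction in which the surjectivity of the $\CA$-action is exploited---needs to be rebuilt along these lines.
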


For the proof of the above statement we need the following result.
\begin{lemma}\label{lemma-prim} Suppose that $\lambda$, $\Lambda$ and $\alpha$ are as in Theorem \ref{theorem-dommul}. For any $n\ge 0$ we then have
$$
\dim\Delta(\lambda)_{\lambda-n\delta}^{\hfn_+}=\dim \Delta(\alpha\uparrow\lambda)_{\lambda-n\delta}^{\hfn_+} = p(n).
$$
\end{lemma}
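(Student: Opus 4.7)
The plan is to interpret dimensions of $\hfn_+$-invariants as Hom spaces between Verma modules and to exploit the $\RZm$-freeness of critical-level Verma modules established in Theorem~\ref{th:FF-freeness}. Under the standard identification $\Delta(\mu)^{\hfn_+}_\nu \cong \Hom_{\hfg}(\Delta(\nu),\Delta(\mu))$ (sending a singular vector to the unique Verma homomorphism it determines), the first equality reduces to $\dim\Hom(\Delta(\lambda-n\delta),\Delta(\lambda))=p(n)$, which is parts (1) and (2) of Theorem~\ref{theorem-ResVerma} applied to $T^{-n}\Delta(\lambda)\cong\Delta(\lambda-n\delta)$ via Lemma~\ref{lemma-propT}.

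For the second equality, the subgeneric Verma embedding $\Delta(\lambda)\hookrightarrow\Delta(\alpha\uparrow\lambda)$ (unique up to scalar) yields the lower bound $\dim\Delta(\alpha\uparrow\lambda)^{\hfn_+}_{\lambda-n\delta}\ge p(n)$ by precomposition. For the matching upper bound, I would use the graded $\RZm$-module isomorphism $\Delta(\alpha\uparrow\lambda)\cong\RZm\otimes_\DC\rDelta(\alpha\uparrow\lambda)$ coming from Theorem~\ref{th:FF-freeness}; since $\RZm$ is central and commutes with $\hfn_+$, this gives
\[\dim\Delta(\alpha\uparrow\lambda)^{\hfn_+}_{\lambda-n\delta}=\sum_{k\ge 0}p(k)\,\dim\rDelta(\alpha\uparrow\lambda)^{\hfn_+}_{\lambda-(n-k)\delta},\]
so the desired equality is equivalent to the sub-claim $\dim\rDelta(\alpha\uparrow\lambda)^{\hfn_+}_{\lambda-r\delta}=\delta_{r,0}$ for all $r\ge 0$. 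The case $r=0$ follows because the embedding sends the highest-weight vector of $\Delta(\lambda)$ to a singular vector whose image in $\rDelta(\alpha\uparrow\lambda)$ is non-zero: a root-combinatorial check shows $\lambda\not\le\alpha\uparrow\lambda-k\delta$ for any $k\ge 1$, so weight $\lambda$ is absent from $\RZm^+\Delta(\alpha\uparrow\lambda)=\Delta(\alpha\uparrow\lambda)^-$; uniqueness comes from the standard fact that $\dim\Hom(\Delta(\lambda),\Delta(\alpha\uparrow\lambda))=1$ in the subgeneric case.

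The main obstacle is the vanishing for $r>0$, which amounts to ruling out singular vectors in $\Delta(\alpha\uparrow\lambda)$ at weight $\lambda-r\delta$ that lie outside the subVerma $\Delta(\lambda)$; equivalently, one needs $(\Delta(\alpha\uparrow\lambda)/\Delta(\lambda))^{\hfn_+}_{\lambda-r\delta}=0$. The plan for this step is a deformation argument using the framework of Section~\ref{sec-Proj}: lift everything to $\CO_A$ for $A=\DC[[t]]$ with $\gamma\in\hfhd$ chosen so that the Casimir scalars $c_{\lambda+\gamma t}$ and $c_{\alpha\uparrow\lambda+\gamma t}$ separate at the generic point, which forces $\Delta_Q(\lambda)$ and $\Delta_Q(\alpha\uparrow\lambda)$ to live in different blocks over $Q$ so that any deformed singular vector in the quotient must specialize to one already in $\Delta(\lambda)$; combined with the $\RZm$-freeness of $\Delta(\alpha\uparrow\lambda)$, upper semi-continuity of $\Hom$-dimensions then matches the lower bound. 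An alternative route is to apply the exact BRST functor $F$ of Section~\ref{sec-BRST} to the short exact sequence $0\to\Delta(\lambda)\to\Delta(\alpha\uparrow\lambda)\to Q\to 0$ and use the explicit character of $F(Q)$ together with Theorem~\ref{Th:main-of-Ara} to constrain the possible primitive vectors of $Q$ at weights $\lambda-r\delta$.
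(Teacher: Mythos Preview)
Your first equality and the lower bound for the second are correct and agree with the paper. The gap is in the upper bound. The displayed identity
\[
\dim\Delta(\alpha\uparrow\lambda)^{\hfn_+}_{\lambda-n\delta}
=\sum_{k\ge 0}p(k)\,\dim\rDelta(\alpha\uparrow\lambda)^{\hfn_+}_{\lambda-(n-k)\delta}
\]
is not justified: the isomorphism $\Delta(\alpha\uparrow\lambda)\cong\RZm\otimes_\DC\rDelta(\alpha\uparrow\lambda)$ from Theorem~\ref{th:FF-freeness} is only an isomorphism of graded $\RZm$-modules, and under any such splitting the action of $x\in\hfn_+$ is $\RZm$-linear but not of the form $1\otimes\bar x$. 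Taking $\hfn_+$-invariants therefore does not commute with this tensor decomposition; a class in $\rDelta(\alpha\uparrow\lambda)^{\hfn_+}$ need not lift to a singular vector, and a singular vector lying in $\Delta(\alpha\uparrow\lambda)^-$ need not be a $\RZm^+$-multiple of another singular vector. Even granting the formula, neither of your sketches for the sub-claim closes: the deformation outline does not control primitive vectors under specialization (separating Casimir eigenvalues at the generic point says nothing about singular vectors in the special fibre of the cokernel), and the BRST functor of Theorem~\ref{Th:main-of-Ara} detects only anti-dominant subquotients, whereas the weights $\lambda-r\delta$ in question are dominant.

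The paper's argument bypasses $\rDelta(\alpha\uparrow\lambda)$ entirely. One embeds further, into the chain $\Delta(\lambda)\subset\Delta(\alpha\uparrow\lambda)\subset\Delta(\alpha\uparrow^2\lambda)\subset\Delta(\alpha\uparrow^3\lambda)$. The key point is that $\alpha\uparrow^2\lambda$ and $\lambda$ differ by a multiple of $\delta$, so Theorem~\ref{th:FF-freeness} applies directly: every singular vector of weight $\lambda-n\delta$ in $\Delta(\alpha\uparrow^2\lambda)$ is of the form $\tilde z\,v_{\alpha\uparrow^2\lambda}$ with $\tilde z\in\CZ^-$. One then picks $z\in\CZ^-$ with $\Delta(\lambda)=z\,\Delta(\alpha\uparrow^2\lambda)$ (hence also $\Delta(\alpha\uparrow\lambda)=z\,\Delta(\alpha\uparrow^3\lambda)$), extends $\{v_{\alpha\uparrow^2\lambda},v_{\alpha\uparrow^3\lambda}\}$ to a $\CZ^-$-basis of $\Delta(\alpha\uparrow^3\lambda)$, and reads off from membership in $z\,\Delta(\alpha\uparrow^3\lambda)$ that $z\mid\tilde z$. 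Hence the singular vector already lies in $\Delta(\lambda)$, giving $\Delta(\alpha\uparrow\lambda)^{\hfn_+}_{\lambda-n\delta}=\Delta(\lambda)^{\hfn_+}_{\lambda-n\delta}$. The idea you are missing is to go \emph{up} to a Verma module whose highest weight lies in $\lambda+\DZ\delta$, where Theorem~\ref{th:FF-freeness} already classifies all the relevant singular vectors.
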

\begin{proof} Note that $\dim\Delta(\lambda)_{\lambda-n\delta}^{\hfn_+}=\dim\Hom(\Delta(\lambda-n\delta),\Delta(\lambda))$, so Theorem \ref{theorem-ResVerma} yields $\dim\Delta(\lambda)_{\lambda-n\delta}^{\hfn_+}=p(n)$.

Let $\gamma\in\Lambda$ be arbitrary. The Jantzen sum formula yields $[\Delta(\gamma):L(\alpha\downarrow\gamma)]=1$ (note that $\alpha\downarrow\gamma$ is maximal in the set $\{\alpha\downarrow\gamma+n\delta\mid [\Delta(\gamma):L(\alpha\downarrow\gamma+n\delta]\ne 0\}$). Hence $\dim\Hom(\Delta(\alpha\downarrow\gamma),\Delta(\gamma))=1$. Since any non-trivial homomorphism between Verma modules is injective, we can view $\Delta(\alpha\downarrow\gamma)$ as a uniquely defined submodule in $\Delta(\gamma)$. 
 In particular, we have a chain of inclusions
 $$
 \Delta(\lambda)\subset\Delta(\alpha\uparrow\lambda)\subset \Delta(\alpha\uparrow^2\lambda)\subset \Delta(\alpha\uparrow^3\lambda).
 $$
 From now on we view each of these modules as a submodule of all the modules appearing on its right (note that  $\dim\Hom(\Delta(\lambda), \Delta(\alpha\uparrow^2\lambda))>1$ in general). 
 By Theorem \ref{th:FF-freeness}, there is an element $z\in \CZ^-$, uniquely defined up to multiplication with a scalar in $\DC^\times$, such that $\Delta(\lambda)=z\Delta(\alpha\uparrow^2 \lambda)$. It is easy to see that this implies $\Delta(\alpha\uparrow\lambda)=z\Delta(\alpha\uparrow^3\lambda)$. 
 
We fix  non-zero vectors $v_{\alpha\uparrow^2\lambda}$ and $v_{\alpha\uparrow^3\lambda}$  of highest weight in $\Delta(\alpha\uparrow^2\lambda)$ and $\Delta(\alpha\uparrow^3\lambda)$, resp. 
We consider both as elements in the free  $\CZ^-$-module $\Delta(\alpha\uparrow^3\lambda)$. As $\CZ^-$ is a graded polynomial ring (in infinitely many variables), as $\CZ^-_0\cong \DC$ and as $v_{\alpha\uparrow^3\lambda}$ and $v_{\alpha\uparrow^2\lambda}$ are not contained in $\left(\bigoplus_{n<0}\CZ^-_n\right)\Delta(\alpha\uparrow^3\lambda)$, we can extend the set $\{v_{\alpha\uparrow^2\lambda},v_{\alpha\uparrow^3\lambda}\}$ to a $\CZ^-$-basis of $\Delta(\alpha\uparrow^3\lambda)$.  In particular, if $v\in \Delta(\alpha\uparrow\lambda)=z\Delta(\alpha\uparrow^3\lambda)$ is of the form $\tilde z v_{\alpha\uparrow^2\lambda}$ for some $\tilde z\in\CZ^-$, then $\tilde z$ is divisible by $z$ in $\CZ^-$. 

Now let  $v\in \Delta(\alpha\uparrow\lambda)^{\hfn_+}_{\lambda-n\delta}$. Then, again by Theorem \ref{th:FF-freeness}, $v=\tilde z v_{\alpha\uparrow^2\lambda}$ for some $\tilde z\in\CZ^-$. As we have observed above, $\tilde z$ is divisible by $z$ in $\CZ^-$, hence $v$ is contained in $\Delta(\lambda)$. Hence
$$
\Delta(\alpha\uparrow\lambda)^{\hfn_+}_{\lambda-n\delta}=\Delta(\lambda)^{\hfn_+}_{\lambda-n\delta},
$$
and we conclude $\dim\Delta(\alpha\uparrow\lambda)^{\hfn_+}_{\lambda-n\delta}=p(n)$ from what we have shown earlier. 
\end{proof}
\begin{proof}[Proof of Theorem \ref{theorem-dommul}] Note first that the claimed identities are equivalent to the
 identities
\begin{align*} 
[\Delta(\lambda):L(\lambda-n\delta)]&=\dim
\Delta(\lambda)^{\hfn_+}_{\lambda-n\delta},\\
[\Delta(\alpha\uparrow\lambda):L(\lambda-n\delta)]&=\dim
\Delta(\alpha\uparrow\lambda)^{\hfn_+}_{\lambda-n\delta}
\end{align*}
as the right hand sides both equal $p(n)$ by  Lemma \ref{lemma-prim}. Hence we want to prove that each simple subquotient with highest weight $\lambda-n\delta$ of $\Delta(\lambda)$ or $\Delta(\alpha\uparrow\lambda)$ corresponds to a primitive vector. As $\Delta(\alpha\uparrow\lambda)$ can be considered as a submodule of $\Delta(\alpha\uparrow^2\lambda)$ and as $\alpha\uparrow^2\lambda$ is dominant again, it is enough to prove the above claim for the simple subquotients of $\Delta(\lambda)$, i.e.~ it is enough to prove that $[\Delta(\lambda):L(\lambda-n\delta)]=\dim
\Delta(\lambda)^{\hfn_+}_{\lambda-n\delta}$.

We prove this  by induction on the number $n$. 
The case $n=0$ is easy to settle: we certainly have $[\Delta(\lambda):L(\lambda)]=1=\dim \Delta(\lambda)^{\hfn^+}_\lambda$. So let us assume that $n>0$ and that  
$$
[\Delta(\lambda):L(\lambda-l\delta)]=\dim \Delta(\lambda)^{\hfn^+}_{\lambda-l\delta}
$$
holds for all $l<n$. Let 
$$
V(\lambda):= \Delta(\lambda)/\Delta({\alpha\downarrow\lambda})
$$
be the {\em Weyl module} with highest weight $\lambda$ (again we consider $\Delta(\alpha\downarrow\lambda)$ as a uniquely define submodule in $\Delta(\lambda)$.  Now we prove the following statement:

\begin{enumerate}
\item
{\em We have $[V(\lambda):L(\lambda-n\delta)]= \dim V(\lambda)^{\hfn_+}_{\lambda-n\delta}$.} 
\end{enumerate}
Suppose that $X$ is a submodule of $V(\lambda)$ such
  that there exists a surjection $f\colon X\to L(\lambda-n\delta)$. In order
  to prove claim (1) it is enough to show that there is a map
  $g\colon \Delta(\lambda-n\delta)\to X$ such that the composition
  $f\circ g\colon \Delta(\lambda-n\delta)\to X\to L(\lambda-n\delta)$ is surjective. 

Now $L(\lambda-n\delta)$ is not a quotient of $V(\lambda)$ (since $n>0$), hence $X$
is a proper submodule of $V(\lambda)$. So its weights are strictly
smaller than $\lambda$, i.e.~ $X$ is an object in
$\CO_\Lambda^{<\lambda}$. Since $P^{<\lambda}(\lambda-n\delta)$ is a
projective cover of $L(\lambda-n\delta)$ in $\CO_\Lambda^{<\lambda}$, there
is a map $h\colon P^{<\lambda}(\lambda-n\delta)\to X$ such that the
composition $P^{<\lambda}(\lambda-n\delta)\stackrel{h}\to
X\stackrel{f}\to L(\lambda-n\delta)$ is surjective. We are going to
show that the map $h$ factors over the quotient map
$P^{<\lambda}(\lambda-n\delta)\to
P^{\varle\lambda-n\delta}(\lambda-n\delta)\cong
\Delta(\lambda-n\delta)$, so that we get an induced map
$\Delta(\lambda-n\delta)\to X$, which we can take as the map $g$ that we
wanted to construct.

By Lemma \ref{lemma-Vermasubquots}, the module $P^{<\lambda}(\lambda-n\delta)$ is an extension of its subquotients $P(\lambda-n\delta)_{[\nu]}$.  
Now $X$ is a submodule of
$V(\lambda)$, which is a module of highest weight $\lambda$. So for each $r>0$ and $z\in \CA_r$ we have
$z^{V(\lambda)}=0$, hence $z^X=0$. Using the  Propositions \ref{prop-actonproj} and
\ref{prop-actonprojsub} and the assumption on the multiplicities, we
can inductively show that each Verma subquotient of
$P^{<\lambda}(\lambda-n\delta)$ lies in the kernel of $h$ except possibly $P(\lambda-n\delta)_{[\alpha\uparrow\lambda-n\delta]}$ and $P(\lambda-n\delta)_{[\lambda-n\delta]}$. 
But since $[X:L(\alpha\uparrow \lambda-n\delta)]=0$ by part (1) of our main Theorem, also $P(\lambda-n\delta)_{[\alpha\uparrow\lambda-n\delta]}$ is contained in the kernel, so  we get an induced map
$$
P(\lambda-n\delta)^{\varle\lambda-n\delta}\cong \Delta(\lambda-n\delta)\to X,
$$
which is what we wanted to show. Hence we proved claim (1).

Secondly, we claim:

\begin{enumerate}
\setcounter{enumi}{1}
\item
 {\em The map $\Delta(\lambda)^{\hfn_+}_{\lambda-n\delta} \to V(\lambda)^{\hfn^+}_{\lambda-n\delta}$ that is induced by the canonical map $\pi\colon\Delta(\lambda)\to V(\lambda)$, is surjective.} 
\end{enumerate}
So let $v\in
V(\lambda)^{\hfn^+}_{\lambda-n\delta}$, $v\ne 0$, and denote by $X$ the
submodule of $V(\lambda)$ which is generated by $v$. Then $X$ is a
highest weight module with highest weight $\lambda-n\delta$ and there
is a short exact sequence
$$
0\to \Delta({\alpha\downarrow\lambda})\to \pi^{-1}(X)\to X\to 0.
$$
Now  the Casimir element $C$ acts on $\pi^{-1}(X)$ as a scalar, since
$\pi^{-1}(X)$ is a submodule of a Verma module. Hence we can apply
Lemma  \ref{lemma-Casimir} and we deduce that there is a submodule $Y$
of $\pi^{-1}(X)$ of highest weight $\lambda-n\delta$ that maps
surjectively to $X$. In particular, there is a preimage of $v$ in
$\pi^{-1}(X)^{\hfn^+}_{\lambda-n\delta}\subset \Delta(\lambda)^{\hfn_+}_{\lambda-n\delta}$. So we proved part (2). 

Our next step is to   prove
\begin{enumerate}
\setcounter{enumi}{2}
\item
 {\em We have $[\Delta({\alpha\downarrow\lambda}):L(\lambda-n\delta)]= \dim \Delta({\alpha\downarrow\lambda})_{\lambda-n\delta}^{\hfn_+}$.} 
\end{enumerate}
Suppose that the claim is wrong, i.e.~ suppose that $[\Delta({\alpha\downarrow\lambda}):L(\lambda-n\delta)]>\dim
  \Delta({\alpha\downarrow\lambda})^{\hfn_+}_{\lambda-n\delta}$. Let $f\colon\Delta(\alpha\downarrow\lambda)\to\Delta(\lambda)$ be a non-zero map and let us consider the composition $g\colon \Delta({\alpha\downarrow\lambda})\stackrel{f}\to \Delta(\lambda)\to\rDelta(\lambda)$. 
  Let $K$ be the kernel and $X$  the image of $g$. Then $X$ is a highest weight module of highest weight $\alpha\downarrow\lambda$, so $[X:L({\alpha\downarrow\lambda})]=1$. As $g$ factors over the map $\Delta({\alpha\downarrow\lambda})\to\rDelta({\alpha\downarrow\lambda})$, part (1) of our main Theorem implies $[X:L({\alpha\downarrow\lambda}-l\delta)]=0$ for all $l>0$. Our induction assumption implies $[\rDelta(\lambda):L(\lambda-l\delta)]=0$ for all $0<l<n$, hence $[X:L(\lambda-l\delta)]=0$ for all $l<n$. 

Now the weights of $K$ are strictly smaller than $\alpha\downarrow\lambda$.  By the induction assumption and part (1) of the main Theorem, each subquotient of $K$ of type $L(\lambda-n\delta)$ hence corresponds to a singular vector of weight $\lambda-n\delta$.  
  Together with the assumption  $[\Delta({\alpha\downarrow\lambda}):L(\lambda-n\delta)]>\dim
  \Delta({\alpha\downarrow\lambda})^{\hfn_+}_{\lambda-n\delta}$ this allows us to deduce    $[X:L(\lambda-n\delta)]>0$. 
  So  $\lambda-n\delta$ occurs as a maximal weight in the maximal submodule of $X$. 
 Hence there is a primitive vector in $X$, hence in
  $\rDelta(\lambda)$, of weight $\lambda-n\delta$. But this contradicts
  Theorem \ref{theorem-ResVerma}. Hence our assumption $[\Delta({\alpha\downarrow\lambda}):L(\lambda-n\delta)]>\dim
  \Delta({\alpha\downarrow\lambda})^{\hfn_+}_{\lambda-n\delta}$ leads to a contradiction, so we proved claim (3).
  
  Now we can finish the proof of the theorem. Recall that we have to show that $[\Delta(\lambda):L(\lambda-n\delta)]=\dim \Delta(\lambda)^{\hfn_+}_{\lambda-n\delta}$ under the assumption that $[\Delta(\lambda):L(\lambda-l\delta)]=\dim \Delta(\lambda)^{\hfn_+}_{\lambda-l\delta}$ for any $l<n$.
  We have
 \begin{align*}
 \dim\Delta(\lambda)_{\lambda-n\delta}^{\hfn_+} &= \dim\Delta({\alpha\downarrow\lambda})_{\lambda-n\delta}^{\hfn_+}+  \dim V(\lambda)_{\lambda-n\delta}^{\hfn_+}\quad\text{ (by $(2)$)}\\
 &= [\Delta({\alpha\downarrow\lambda}):L(\lambda-n\delta)]+[V(\lambda):L(\lambda-n\delta)] \quad\text{ (by $(1)$ and $(3)$)}\\
 &= [\Delta(\lambda):L(\lambda-n\delta)], 
\end{align*}
which is what we wanted to prove.
\end{proof}


\begin{thebibliography}{RCW82}
\bibitem[AJS94]{AJS94}
Henning Haahr Andersen, Jens Carsten Jantzen, Wolfgang Soergel, \emph{Representations of quantum
  groups at a {$p$}th root of unity and of semisimple groups in characteristic
  {$p$}: independence of {$p$}}, Ast\'erisque (1994), no.~220.
\bibitem[Ara07]{Ara07}
Tomoyuki Arakawa, \emph{Representation theory of {$W$}-algebras}, 
 Invent.~ Math.~ {\bf 169} (2) (2007), 219--320.

\bibitem[AF09]{AF2} Tomoyuki Arakawa, Peter Fiebig, {\em The linkage principle for restricted critical level representations of affine Kac--Moody algebras}, preprint arXiv:0909.4214.

\bibitem[BD96]{BeiDri96}
Alexander Beilinson, Vladimir Drinfeld, \emph{Quantization of {H}itchin's
  fibration and {L}anglands' program}, in {\em Algebraic and geometric methods in mathematical physics
  (Kaciveli, 1993)}, volume~19 of {\em Math.~Phys.~Stud.}, pages 3--7. Kluwer
  Acad.~Publ., Dordrecht, 1996.

\bibitem[DGK82]{DGK82} Vinay Deodhar, Ofer Gabber, Victor Kac, \emph{Structure of some categories of representations of
    infinite-dimensional Lie algebras},   Adv. Math.  {\bf 45}  (1982), no. 1, 92--116. 

\bibitem[EF01]{EisFre01}
David Eisenbud, Edward Frenkel,
\newblock Appendix to \cite{Mus01},
\newblock 2001.

\bibitem[FBZ04]{FreBen04}
Edward Frenkel,  David Ben-Zvi, {\em Vertex algebras and algebraic curves}, volume~88 of {\em
  Mathematical Surveys and Monographs}, American Mathematical Society, Providence, RI, second edition, 2004.

\bibitem[Fe{\u\i}84]{Feu84}
Boris Fe{\u\i}gin, \emph{Semi-infinite homology of {L}ie, {K}ac-{M}oody and {V}irasoro
  algebras},
  Uspekhi Mat.~Nauk, {\bf 39} (2(236)) (1984), 195--196.

\bibitem[FF90]{FeiFre90}
Boris Feigin, Edward Frenkel, {\em Representations of affine
  {K}ac-{M}oody algebras and bosonization}, in {\em Physics and mathematics of strings}, 271--316. World
  Sci.~Publ., Teaneck, NJ, 1990.

\bibitem[FF92]{FeiFre92}
\bysame, {\em Affine {K}ac-{M}oody algebras at the critical level and {G}el\cprime
  fand-{D}iki\u\i\ algebras}, in {\em Infinite analysis, Part A, B} (Kyoto, 1991), volume~16 of
  {\em Adv. Ser. Math. Phys.}, 197--215. World Sci.~Publ., River Edge,
  NJ, 1992.
\bibitem[Fie03]{Fie03} Peter Fiebig, {\em Centers and translation functors for the category O over symmetrizable Kac-Moody algebras}, Math.~Z.~ 243 (2003), No. 4, 689-717.
\bibitem[Fie06]{Fie06} \bysame, {\em The combinatorics of
    category O for symmetrizable Kac-Moody algebras},  Transform. Groups 11 (2006), No. 1, 29-49. 

\bibitem[Fie07]{Fie07}
\bysame, \emph{Lusztig's conjecture as a moment graph problem};
to appear in Bull.~London Math.~Soc.; preprint arXiv:0712.3909.

\bibitem[Fre07]{Fre07}
Edward Frenkel, {\em Langlands correspondence for loop groups}, volume 103 of {\em
  Cambridge Studies in Advanced Mathematics}.
\newblock Cambridge University Press, Cambridge, 2007.

\bibitem[FG06]{FreGai06}
Edward Frenkel, Dennis Gaitsgory, \emph{Local geometric {L}anglands
  correspondence and affine {K}ac--{M}oody algebras}, Algebraic geometry and
  number theory, Progr.~Math., vol.~253, Birkh\"auser Boston, Boston, MA, 2006,
  pp.~69--260. 

\bibitem[FG07]{FreGai07} \bysame, {\em $D$-modules on
    the affine flag variety and representations of affine Kac-Moody
    algebras}, preprint {\tt arXiv:0712.0788}.
\bibitem[Hay88]{Hay88} Takahiro Hayashi, {\em Sugawara operators and {K}ac-{K}azhdan conjecture}, Invent.~ Math.~ {\bf 94} (1988), no.~1, 13--52.
\bibitem[Jan79]{Jan79}
Jens Carsten Jantzen, {\em Moduln mit einem h{\"o}chsten Gewicht}, Lecture Notes in Mathematics {\bf 750}, Springer, 1979.
\bibitem[Kac90]{Kac90} Victor Kac, {\em Infinite-dimensional Lie algebras},
Third edition. Cambridge University Press, Cambridge, 1990. 


\bibitem[Kac98]{Kac98}
\bysame, {\em Vertex algebras for beginners}, volume~10 of {\em University
  Lecture Series}.
\newblock American Mathematical Society, Providence, RI, second edition, 1998.

\bibitem[KK79]{KK79}  Victor Kac, David Kazhdan, {\em Structure of
    representations with highest weight of infinite-dimensional Lie
    algebras}, Adv.~Math. {\bf 34} (1979), 97--108. 

\bibitem[Kos78]{Kos78}
Bertram Kostant, {\em On {W}hittaker vectors and representation
  theory},  Invent.~Math.~ {\bf 48} (2) (1978),101--184.

\bibitem[KT00]{KT00} Masaki Kashiwara, Toshiyuki Tanisaki, {\em Characters for irreducible modules with non--critical highest weights over affine Lie algebras},  in: Wang, Jianpan (ed.) et al., Representations and quantizations. Proceedings of the international conference on representation theory, Shanghai, China, June 29--July 3, 1998. Beijing: China Higher Education Press (CHEP), 275--296 (2000).

\bibitem[Ku89]{Ku89}  Jong Min Ku, \emph{Structure of the Verma module $M(-\rho)$ over Euclidean Lie algebras}, 
J.~ Algebra {\bf 124} (1989), no. 2, 367--387. 

\bibitem[Li04]{Li04}
Haisheng Li, {\em Vertex algebras and vertex {P}oisson algebras},
Commun.~Contemp.~ Math., {\bf 6}(1) (2004), 61--110.
\bibitem[Lus80]{Lus80}
George Lusztig, \emph{Hecke algebras and {J}antzen's generic decomposition
  patterns}, Adv. in Math. \textbf{37} (1980), no.~2, 121--164.

\bibitem[Lus91]{Lus91} \bysame, \emph{Intersection cohomology methods in representation theory},  I.~Satake (ed.), Proc.~Internat.~Congr.~Math.~Kyoto 1990, I, Springer (1991), 155--174.

\bibitem[Mal90]{Mal90} Feodor Malikov, \emph{Singular vectors corresponding to imaginary roots in Verma modules over affine Lie algebras},  Math.~Scand.~{\bf  66}  (1990),  no. 1, 73--90. 
\bibitem[Mat96]{Mat96} Olivier Mathieu, \emph{On some modular representations of affine Kac-Moody algebras at the critical level},  Compositio Math.~{\bf  102}  (1996),  no. 3, 305--312.
\bibitem[MP95]{MP95} Robert Moody, Arturo Pianzola, {\em 
Lie algebras with triangular decompositions},
Canadian Mathematical Society Series of Monographs and Advanced Texts. A Wiley-Interscience Publication. John Wiley \& Sons, Inc., New York, 1995.

\bibitem[Mus01]{Mus01}
Mircea Musta{\c{t}}{\u{a}}, {\em  Jet schemes of locally complete
  intersection canonical singularities}, Invent.~Math.~ {\bf 145} (3)
(2001), 397--424.
\newblock With an appendix by David Eisenbud and Edward Frenkel.

\bibitem[RCW82]{RCW82} Alvany Rocha-Caridi, Nolan Wallach, {\em
    Projective modules over graded Lie algebras}, Mathematische
  Zeitschrift {\bf 180} (1982), 151--177.
\bibitem[Soe90]{Soe90}
Wolfgang Soergel, {\em Kategorie $\CO$, perverse Garben, und Moduln \"uber den Koinvarianten zur Weylgruppe}, J.~Am.~Math.~Soc.~ {\bf 3} (1990), No.2, 421--445 

\bibitem[Soe97]{Soe97}
\bysame, \emph{Kazhdan-{L}usztig-{P}olynome und eine {K}ombinatorik f\"ur
  {K}ipp-{M}oduln}, Represent. Theory \textbf{1} (1997), 37--68 (electronic).

\end{thebibliography}
\end{document}